\newtheorem{theorem}{Theorem}[section]
\newtheorem{lemma}[theorem]{Lemma}
\newtheorem{proposition}[theorem]{Proposition}
\newtheorem{remark}[theorem]{Remark}
\newcommand\NN{{\mathbb N}}
\newcommand\RR{{{\mathbb R}}}
\newcommand{\eps}{\varepsilon}
\newcommand{\norm}[1]{\big\Vert#1\big\Vert}
\newcommand{\abs}[1]{\left\vert#1\right\vert}
\newcommand{\set}[1]{\left\{#1\right\}}
\newcommand{\inner}[1]{\left(#1\right)}
\newcommand{\comii}[1]{\left<#1\right>}
\newcommand{\com}[1]{\bigl[#1\bigr]}
\newcommand{\reff}[1]{(\ref{#1})}
\newcommand{\p}{\partial_x}
\numberwithin{equation}{section}
\newcommand{\bef}{\begin{proof}}
\newcommand{\enf}{\end{proof}}
\begin{document}

\title[Gevrey class smoothing effect for   Prandtl  equation]
{Gevrey class smoothing effect for  the Prandtl  equation}

\author[W. Li, D. Wu \& C.-J. Xu]
{Wei-Xi LI, Di WU and Chao-Jiang XU}
\date{}
\address{\noindent \textsc{Wei-Xi Li, School of Mathematics, Wuhan university 430072, Wuhan, P.R. China}}
\email{wei-xi.li@whu.edu.cn}
\address{\noindent \textsc{Di Wu, School of Mathematics, Wuhan university 430072, Wuhan, P.R. China}}
\email{wudi2530@whu.edu.cn}

\address{\noindent \textsc{Chao-Jiang Xu, Universit\'e de Rouen, CNRS UMR 6085, Laboratoire de Math\'ematiques, 76801 Saint-Etienne du Rouvray, France\\
and\\
School of Mathematics, Wuhan university 430072, Wuhan, P.R. China}}
\email{Chao-Jiang.Xu@univ-rouen.fr}

\keywords{Prandtl's equation, Gevrey class,  subelliptic estimate,  monotonic condition}
\subjclass[2000]{35M13, 35Q35, 76D10, 76D03, 76N20}

\begin{abstract}
  It is well known that the Prandtl boundary layer equation is instable, and the well-posedness in Sobolev space for the Cauchy problem is an open problem. Recently, under the Oleinik's monotonicity assumption for the initial datum, \cite{awxy} have proved the local well-posedness of Cauchy problem in Sobolev space (see also \cite{masmoudi-1}). In this work, we study the Gevrey smoothing effects of the local solution obtained in \cite{awxy}. We prove that the Sobolev's class solution   belongs to some Gevrey class with respect to tangential variables at any positive time. 
\end{abstract}

\maketitle

\section{Introduction}\label{s1}

In this work, we study the regularity of solutions to the Prandtl equation which is the foundation
of the boundary layer theory introduced by Prandtl in 1904,  \cite{prandtl}.  The inviscid limit   of an incompressible viscous flow with the non-slip boundary condition is still a challenging problem of mathematical analysis due to the appearance of a boundary layer,  where the tangential velocity adjusts rapidly from nonzero away from the boundary to zero on the boundary.  Prandtl equation describes the behavior of the flow near the boundary  in the small viscosity limit, and it reads
\begin{equation*}
\left\{\begin{array}{l} u_t + u u_x + vu_y + p_x
= u_{yy},\quad t>0,\quad x\in\RR,\quad y>0, \\
u_x +v_y =0, \\
u|_{y=0} = v|_{y=0} =0 , \ \lim\limits_{y\to+\infty} u =U(t,x), \\
u|_{t=0} =u_0 (x,y)\, ,
\end{array}\right.
\end{equation*}
where     $u(t,x,y)$ and $v(t,x,y)$ represent the tangential and normal
velocities of the boundary layer, with $y$ being the scaled normal variable
to the boundary, while $U(t,x)$ and $p(t,x)$ are the values on the
boundary of the tangential velocity and pressure of the outflow satisfying the Bernoulli law
\[
\partial_t U + U\partial_x U +\partial_x q=0.
\]
Because of  the degeneracy in tangential variable,  the well-posedness theories and the justification of the Prandtl's boundary layer theory remain as the challenging problems in the mathematical theory of fluid mechanics.   Up to now, there are only  a few  rigorous mathematical results (see \cite{e-1, GV-N, guo, hong-hunter, metivier} and referencesin). Under a monotonic assumption on the tangential
velocity of the outflow, Oleinik was the first to obtain
the local existence of classical solutions for the initial-boundary value problems, and this result together with some of her
works with collaborators were well presented in
the monograph \cite{oleinik-3}.  In addition
to Oleinik's monotonicity assumption on the velocity field, by imposing a so called favorable condition on the pressure, Xin-Zhang  \cite{xin-zhang} obtained the existence of global weak solutions to the Prandtl equation. All these well-posedness results were
 based on the Crocco transformation to overcome  the main difficulty caused by  degeneracy and mixed type of the equation.  Very recently the   well-posedness in the Sobolev space   was explored by virtue of energy method  instead of the Crocco transformation;  see   Alexandre et. all \cite{awxy}  and Masmoudi-Wong \cite{masmoudi-1}.  There is very few work concerned with the Prandtl equation without  the monotonicity assumption; we refer \cite{caf, cannone-2, cannone, DJ, Samm, zhangzhang} for the works in the analytic frame,  and \cite{GV-Ma,kmv} for the recent works  concerned with the existence in Gevrey class.  Recall Gevrey class, denoted by $G^s, s\geq 1$,  is an intermediate space between analytic functions  and $C^\infty$ space.  Given a domain $\Omega,$  the (global) Gevrey space $G^s(\Omega)$ is consist of such functions that $f\in C^\infty(\Omega)$ and that
 \begin{eqnarray*}
 	\norm{\partial^\alpha f}_{L^2(\Omega)}\leq L^{\abs\alpha+1}(\alpha !)^s
 \end{eqnarray*}
 for some constant $L$ independent of  $\alpha.$   The  significant difference between Gevery  ($s>1$) and analytic ($s=1$) classes is that there exist nontrivial  Gevrey functions admitting compact support.   

   We mention that due to the degeneracy in $x$,  it is natural  to expect Gevrey regularity  rather than analyticity for a subelliptic equation.   We refer \cite{clx3, clx1, clx2, DZ} for the link between subellipticity and Gevrey reguality.      In this paper we first study the intrinsic subelliptic structure due to the monotonicity condition, and then deduce,  basing on the subelliptic estimate,  the Gevrey smoothing effect;  that is,  given a monotonic initial data belonging to some Sobolev space,  the solution will lie in some Gevrey class at positive time,  just as like heat equation. It is different from  
   the Gevrey  propagation property obtained in the  aforementioned works,  where  the initial data is supposed to be of some Gevrey class,  for instance $G^{7/4}$ in \cite{GV-Ma}, and  the well-posedness is obtained in the same Gevrey space.  

Now we state our main result.  Without loss of generality, we  only consider  here the case of an uniform outflow $U=1$,  and the conclusion will still hold for Gevrey class  outflow $U$.   We mention that the Gevrey regularity for outflow $U$ is well developed (see \cite{kv} for instance).    For the uniform outflow,  we get  the constant pressure
$p$ due to the Bernoulli law.   Then the  Prandtl equation can be rewritten as
\begin{equation}\label{prandtl1}
\left\{\begin{array}{l} u_t + u u_x + vu_y- u_{yy}=0,\quad (t, x, y)\in ]0, T[\times\RR_+^2, \\
u_x +v_y =0, \\
u|_{y=0} = v|_{y=0} =0 , \ \lim\limits_{y\to+\infty} u =1, \\
u|_{t=0} =u_0 (x,y)\, ,
\end{array}\right.
\end{equation}

The main result concerned with the Gevrey class regularity can be stated as follows. 

\begin{theorem}\label{mainthm}
Let  $u(t,x,y)$ be a classical local in time  solution to Prandtl equation  \reff{prandtl1} on $[0, T]$  with  the properties subsequently listed below: 
\begin{enumerate}[\quad (i)]
  \item There exist two constants  $C_{*}>1, \sigma>1/2$ such that for any $(t, x, y)\in [0, T]\times\RR_+^2$, 
  \begin{equation}\label{1.3} 
   \begin{array}{l}
  	  C_*^{-1} \comii y^{-\sigma}\leq \partial_y u (t, x, y) \leq C_* \comii y^{-\sigma}, \\
  	    \abs{\partial_y^2 u (t, x, y)}+\abs{\partial_y^3 u (t, x, y)} \leq C_* \comii y^{-\sigma-1}, 
  \end{array}  
  \end{equation}
 where $\comii y =(1+|y|^2)^{1/2}$.
 \item   There exists $c>0, C_0>0$ and integer $N_0\geq 7$ such that   
\begin{eqnarray}\label{1.5}
	\|e^{2cy} \partial_x u \|_{L^{\infty}\inner{[0,T];~H^{N_0}( \RR_+^2)}}+
	\|e^{2cy} \partial_x \partial_y u \|_{L^{2}\inner{[0,T];~H^{N_0}( \RR_+^2)}}\le C_0\,.
\end{eqnarray}
\end{enumerate}
Then  for any $0<T_1<T$, there exists a constant $L$,  such that  for any $0<t\le T_1$,
\begin{equation}\label{maiest}
\forall~m>1+N_0  ,\quad   \norm{ e^{\tilde cy}\partial_x^m
  u(t)}_{L^{2}( \RR_+^2)}\leq t^{-3 (m-N_0-1)}\,L^{m}\inner{m!}^{3(1+\sigma)},
\end{equation}
where $0<\tilde  c<c$. The constants $L$ depends only on $C_0,  T_1, C_*,  c, \tilde c$ and $\sigma$.  Therefore, the solution $u$  belongs to the Gevery class of index $3(1+\sigma)$ with respect to $x\in \mathbb{R}$ for any $0<t\le T_1$.
\end{theorem}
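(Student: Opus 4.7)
The approach is to prove \reff{maiest} by induction on $m$, combining a weighted $L^2$ energy estimate for $\partial_x^m u$ with a subelliptic inequality of order $1/3$. The key structural observation is that under the monotonicity hypothesis (i), the Prandtl operator $L=\partial_t+u\partial_x+v\partial_y-\partial_y^2$ enjoys a H\"ormander-type gain
\begin{equation*}
 \norm{\comii y^{-\sigma}(1-\partial_x^2)^{1/6}f}_{L^2([0,T]\times\RR_+^2)}^2\lesssim \int_0^T\inner{Lf,f}_{L^2}\,dt+\textrm{l.o.t.},
\end{equation*}
arising from the commutator $[u\partial_x,\partial_y]=-(\partial_y u)\partial_x$ together with $\partial_y u\gtrsim\comii y^{-\sigma}$. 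This estimate, which I take to be established in an earlier section of the paper, is the source of both the exponent $3$ in the Gevrey index (recovering a full $\partial_x$ requires three iterations of the $1/3$-gain) and the extra $\sigma$ (from the polynomial weight $\comii y^{\sigma}$).

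Applying $\partial_x^m$ to \reff{prandtl1} yields $L(\partial_x^m u)=R_m$ with
\begin{equation*}
 R_m=-\sum_{k=1}^{m}\binom{m}{k}\com{(\partial_x^ku)(\partial_x^{m+1-k}u)+(\partial_x^kv)(\partial_x^{m-k}\partial_yu)}.
\end{equation*}
The apparent top-order loss from the term $v\partial_y(\partial_x^m u)$ is neutralised by the Oleinik-type substitution $g_m=\partial_y(\partial_x^m u/\partial_y u)$ of \cite{awxy,masmoudi-1}, which under (i) is equivalent in exponentially weighted $L^2$ to $\partial_y\partial_x^m u$ modulo lower-order terms. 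A weighted energy estimate for $g_m$ coupled with the subelliptic inequality above, applied to $f=e^{\tilde cy}\partial_x^m u$, then produces
\begin{equation*}
 \frac{d}{dt}\norm{e^{\tilde cy}g_m}_{L^2}^2+\norm{\comii y^{-\sigma}(1-\partial_x^2)^{1/6}e^{\tilde cy}\partial_x^m u}_{L^2}^2\lesssim \norm{e^{\tilde cy}R_m}_{L^2}\norm{e^{\tilde cy}g_m}_{L^2}+\textrm{l.o.t.};
\end{equation*}
the strict inequality $\tilde c<c$ is crucial in order to absorb both the $\comii y^{\sigma}$ factor brought in by the subelliptic estimate and the exponential loss incurred in the reconstruction $v=-\int_0^y\partial_x u\,d\tilde y$.

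The induction is then closed via a time-weighted quadratic functional of the schematic form
\begin{equation*}
 A_m(t)=t^{6(m-N_0-1)}\norm{e^{\tilde cy}\partial_x^m u(t)}_{L^2}^2+\int_0^t s^{6(m-N_0-1)}\norm{\comii y^{-\sigma}(1-\partial_x^2)^{1/6}e^{\tilde cy}\partial_x^m u(s)}_{L^2}^2\,ds,
\end{equation*}
and the ansatz $A_m(t)\leq L^{2m}(m!)^{6(1+\sigma)}$ for $m>N_0+1$ and $t\in(0,T_1]$; the case $m\leq N_0+1$ is the base, supplied by \reff{1.5}. In the inductive step one multiplies the energy inequality by $s^{6(m-N_0-1)}$ and integrates: the boundary term generated by $\frac{d}{ds}s^{6(m-N_0-1)}$ is absorbed into the subelliptic dissipation after three iterations of the $(1-\partial_x^2)^{1/6}$-gain; the loss of three $\partial_x$-derivatives, each costing a factor $s^{-1}$, produces the $t^{-6(m-N_0-1)}$ scaling that, upon taking square roots, matches the $t^{-3(m-N_0-1)}$ of \reff{maiest}.

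The main obstacle is the combinatorial bookkeeping of the nonlinear commutator $R_m$: substituting the induction hypothesis at orders $<m$ into each product reduces matters to a Gevrey-type convolution inequality of the schematic form
\begin{equation*}
 \sum_{k=1}^{m-1}\binom{m}{k}(k!)^{3(1+\sigma)}\bigl((m-k)!\bigr)^{3(1+\sigma)}\leq C^m\,(m!)^{3(1+\sigma)},
\end{equation*}
which holds because $3(1+\sigma)>1$. Two further technical points will need care: the contribution of $v$ is delicate because $\partial_x^k v$ is reconstructed by a $y$-integral of $\partial_x^{k+1}u$, and controlling it in $L^\infty_x L^2_y$ with the exponential weight requires the strict inequality $\tilde c<c$ together with the upper bound on $\partial_y^j u$ from (i) and the Sobolev control \reff{1.5}; and the polynomial weight $\comii y^{-\sigma}$ on the subelliptic dissipation suggests performing the induction on a small vector of mixed $\partial_y^j\partial_x^m$ norms with polynomial $y$-weights so that the system closes for $L$ chosen sufficiently large in terms of $C_0$, $T_1$, $C_*$, $c$, $\tilde c$ and $\sigma$.
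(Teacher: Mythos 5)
Your proposal follows essentially the same route as the paper: the Masmoudi--Wong auxiliary function $g_m=\partial_y(\partial_x^m u/\partial_y u)$ (the paper's $f_m=\omega\,g_m$), a weighted $1/3$-subelliptic estimate for the linearized Prandtl operator iterated three times per tangential derivative, time weights of the form $t^{3(m-N_0-1)}$ to extract smoothing from Sobolev data, a Gevrey convolution bound for the Leibniz remainder, and an induction closed on a vector of mixed $\partial_y^j\partial_x^m$ norms carrying $m$-dependent polynomial $y$-weights. The only imprecision is your schematic derivation of the subelliptic gain from $\int_0^T(Lf,f)\,dt$, which by itself yields only the $\partial_y$-dissipation---the gain in the paper comes from testing against $\partial_y\partial_x\Lambda^{-2/3}f$---but since you invoke that estimate as established elsewhere, this does not change the strategy.
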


\begin{remark}{\em 
\item [1).]  Such a solution in the above theorem  exists,  for instance, suppose that the  initial data $u_0$ can be written as 
\begin{eqnarray*}
	u_0(x,y)=u_0^s(y)+\tilde u_0(x,y),
\end{eqnarray*}
where  $u^s_0$  is a function of $y$ but independent of $x$ such that $ C^{-1}\comii y^{-\sigma} \leq \partial_yu_0^s(y) \leq C  \comii y^{-\sigma} $ for some constant $C\geq 1$,  and $\tilde u_0 $ is a small perturbation such that  its weighted  Sobolev norm $\norm{e^{2cy}\tilde u_0}_{H^{2N_0+7}(\mathbb R_+^2)}$ is suitably small.   Then using the arguments in \cite{awxy},  we can obtain the desired solution with the properties listed in Theorem \ref{mainthm} fulfilled.   Precisely,  the solution $u(t, x,y)$  is a perturbation of a shear flow $u^s(t, y)$ such that property (i) in the above theorem holds for $u,$  and  moreover $e^{2cy} \inner{u-u^s} \in L^{\infty}\inner{[0,T];~H^{N_0+1}( \RR_+^2)}.$  

\smallskip
\item[2).] The well-posedness problem of Prandtl's equation depends crucially on the choice of the underlying function spaces, especially
on the regularity  in the tangential variable $x$.  If the initial datum is analytic in $x$,  then the local in time solution exists(c.f. \cite{cannone, Samm, zhangzhang}),  but the Cauchy problem is ill-posedness in Sobolev space for linear and non linear Prandtl equation (cf. see \cite{e-2, GV-D}).   Indeed, the main mathematical difficulty is the lack of control on the $x$ derivatives.  For example,  $v$  in \eqref{prandtl1}  may be written as $-\int^y_0 u_x (y')dy'$ by the divergence condition,  and here we lose  one derivatives in $x$-regularity.  The degeneracy  can't be balanced directly by any horizontal diffusion term,  so that the standard energy estimates do not apply to establish the existence of local solution. But the results in our main Theorem \ref{mainthm} shows that {\em the loss of derivative in tangential variable $x$ can be  partially compensated  via the monotonic condition}.     

\smallskip
\item [3).] Under the hypothesis \eqref{1.3}, the   equation \eqref{prandtl1} 
is a non linear hypoelliptical equation of  H\"ormander type with a gain of regularity of order $\frac 13$ in $x$ variable (see Proposition \ref{subtang}), so that any $C^2$ solution is locally $C^\infty$, see \cite{Xu1,Xu2, Xu3};  for the corresponding linear operator,  \cite{DZ} obtained the regularity in the local Gevrey space $G^3$.  However, in this paper we study the equation \eqref{prandtl1} as a boundary layer equation, so that the local property of solution is not of interest to the physics application,  and our goal is then to study the global estimates in Gevrey class.  In view of  \eqref{1.3} we see  $u_y$ decays polynomially  at infinite,   so we only have a weighted subelliptic estimate  (see Proposition \ref{subtang}).  This explains why the Gevrey index,  which  is $3(1+\sigma)$,    depends also on the decay index $\sigma$  in \eqref{1.3}. 
  
\smallskip
\item[4).] Finally, the  estimate \eqref{maiest} gives an explicit Gevrey norm of solutions for the Cauchy problem with respect to $t>0$ when the initial datum is only in some finite order Sobolev space. Since the Gevrey class is an intermediate space between analytic space and Sobolev space,  the qualitative study of  solutions   in Gevery class can help us to understand the Prandtl boundary layer theory which has been justified in analytic frame.
}
\end{remark}

The paper is organized as follows.  In Section \ref{section2} we  prove Theorem \ref{mainthm},  and state some preliminaries  lemmas used in the proof.  The other sections are occupied by the proof of the preliminaries lemmas. Precisely,  we prove in Section \ref{section3}  a subelliptic estimate  for the linearized Prandtl operator.   Section \ref{section4} and Section \ref{section5} are devoted to presenting a crucial estimate for an auxilliary function and non linear terms.    The last section is an appendix, where the equation fulfilled by the auxilliary function is deduced.

\section{Proof for the main Theorem}\label{section2}
We will prove in this section the Gevery estimate \eqref{maiest} by induction on $m$.   As in \cite{masmoudi-1}, we consider the following auxilliary function   
\begin{eqnarray}
\label{fmfun+}
f_m=\partial_x^m \omega-\frac{\partial_y \omega}{\omega}\partial_x^m u=\omega\partial_y\inner{\frac{\partial_x^mu}{\omega}}, \quad m\geq 1,
\end{eqnarray}
where $\omega=\partial_y u>0$ and $u$  is a solution of equation \eqref{prandtl1} which satisfy the hypotheis \eqref{1.3}.  We also introduce the following inductive weight , 
\begin{eqnarray}\label{wmi+++++}
W^\ell_m=e^{2cy}\inner{1+\frac{2cy}{(3m+\ell)\sigma} }^{-\frac{(3m+\ell)\sigma}{2}}
\inner{1+cy}^{-1} \Lambda^{{\ell\over 3}}, \quad 0\le \ell\le 3,\,\,\,m\in\NN,\,\, y>0,
\end{eqnarray}
where  $\Lambda^d=\Lambda^d_x$ is the Fourier multiplier of symbol $\comii{\xi}^d $ with respect to $x\in\RR$.  Notting  
\begin{equation}\label{2.3}
W_m^0\geq e^{cy}\inner{1+cy}^{-1}\geq c_0e^{\tilde cy},
\end{equation}
for $0<\tilde c<c.$

Since 
$$
\left|\frac{\partial_y \omega}{\omega}\right|\le C^2_* \comii y^{-1},
$$
we have that , if $u$ is smooth, 
\begin{equation*}
\|W^0_m f_m\|_{L^2(\RR^2_+)}\le 	\|W^0_m \partial^m_x \omega\|_{L^2(\RR^2_+)}
+C^2_*\|W^0_m\comii y^{-1} \partial^m_x u\|_{L^2(\RR^2_+)}.
\end{equation*}
On the other hand, we have the following Poincar\'e type inequality.

\begin{lemma} \label{lemma2.1}
There exist $C_1, \tilde {C_1}>0$ independents of $m\ge 1, 0\le \ell\le 3$, such that 
\begin{eqnarray}\label{equ1+a}
 \norm{  \comii y^{-1}W^\ell_m \p^{m} u}_{L^2(\RR^2_+)}+\norm{  \comii y^{-1}W^\ell_m \partial^{m}_x\omega}_{L^2(\RR^2_+)} \leq C_1\big\|  W^\ell_m  f_m \big\|_{L^2(\RR^2_+)}.
 \end{eqnarray}
As a result,
 \begin{eqnarray}\label{++equ1+a}
\big\|  \Lambda^{-1} W^0_{m}  f_{m+1} \big\|_{L^2(\RR^2_+)}\leq \tilde {C_1} \norm{   W^0_{m} f_{m}}_{L^2(\RR^2_+)},
 \end{eqnarray}
 and 
  \begin{eqnarray*}
\big\|  \Lambda^{-1} \partial_y  W^0_{m}  f_{m+1} \big\|_{L^2(\RR^2_+)}\leq \tilde {C_1}\inner{ \norm{ \partial_y  W^0_{m} f_{m}}_{L^2(\RR^2_+)}+\big\|     W^0_{m}  f_{m} \big\|_{L^2(\RR^2_+)}}. 
 \end{eqnarray*}
\end{lemma}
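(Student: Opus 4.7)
The plan is to treat the three inequalities in order, with the first being the main step and the latter two being derivative consequences.

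For \eqref{equ1+a}, the decisive identity is $f_m=\omega\,\partial_y(\partial_x^m u/\omega)$. Combined with the no-slip boundary condition $\partial_x^m u|_{y=0}=0$ and $\omega(\cdot,0)\geq C_*^{-1}>0$ from \eqref{1.3}, setting $g=\partial_x^m u/\omega$ yields $g|_{y=0}=0$ and $\partial_y g=f_m/\omega$, so that $\partial_x^m u$ is recovered from $f_m$ by vertical integration. I would then bound $\int(\partial_x^m u)^2\comii y^{-2}(W_m^\ell)^2\,dy$ by integrating by parts in $y$ against the antiderivative $H$ of $h:=(W_m^\ell)^2\comii y^{-2}$ with $H(0)=0$. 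The boundary term at $y=0$ vanishes, and substituting $\partial_y\partial_x^m u = f_m+(\partial_y\omega/\omega)\partial_x^m u$ produces a cross term $\int(\partial_x^m u)f_m H\,dy$ controlled by Cauchy-Schwarz, and a remainder carrying $|\partial_y\omega/\omega|\leq C_*^2\comii y^{-1}$ that is absorbed into the main term. This reduces the whole estimate to the pointwise comparison $H(y)\lesssim (W_m^\ell)^2(y)\,\comii y^{-1}$, which must be established uniformly in $m\geq 1$ and $0\leq\ell\leq 3$. Its verification is the main technical point and relies on the explicit form of the inductive weight, in particular the tempering factor $(1+2cy/((3m+\ell)\sigma))^{-(3m+\ell)\sigma/2}$ and the damping $(1+cy)^{-1}$, which together force the ratio $H(y)\comii y/(W_m^\ell)^2(y)$ to be bounded independently of $m$ and $\ell$. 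The companion bound on $\partial_x^m\omega$ then follows from the triangle inequality applied to $\partial_x^m\omega=f_m+(\partial_y\omega/\omega)\partial_x^m u$ and from $|\partial_y\omega/\omega|\leq C_*^2\comii y^{-1}$.

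For \eqref{++equ1+a}, I would derive the recursion
\[
f_{m+1}=\partial_x f_m+[\partial_x(\partial_y\omega/\omega)]\partial_x^m u
\]
by differentiating the definition of $f_m$ in $x$. Because $W_m^0$ is a function of $y$ alone, it commutes with the Fourier multiplier $\Lambda^{-1}$, and the first contribution $W_m^0(\Lambda^{-1}\partial_x)f_m$ is controlled by $\|W_m^0 f_m\|_{L^2}$ since $\Lambda^{-1}\partial_x$ has operator norm at most one on $L^2$. For the remainder, the coefficient $\partial_x(\partial_y\omega/\omega) = \partial_x\partial_y^2 u/\partial_y u-(\partial_y^2 u)(\partial_x\partial_y u)/(\partial_y u)^2$ is pointwise bounded with $\comii y^{-1}$ (indeed exponential) decay, which follows from \eqref{1.3} together with \eqref{1.5} and Sobolev embedding (made possible by $N_0\geq 7$). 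One then closes the estimate by invoking \eqref{equ1+a}.

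The third inequality follows the same template. Using $[\Lambda^{-1},\partial_y]=0$ and the product rule $\partial_y(W_m^0 f_{m+1})=(\partial_y W_m^0)f_{m+1}+W_m^0\,\partial_y f_{m+1}$, the first term is bounded via the elementary pointwise estimate $|\partial_y W_m^0|\leq CW_m^0$ (immediate from computing the logarithmic derivative of $W_m^0$) together with \eqref{++equ1+a}; for the second, I would differentiate the recursion above once more in $y$ and apply \eqref{equ1+a} to the resulting lower-order terms. The main obstacle throughout is the weighted Hardy-type estimate underlying \eqref{equ1+a}: its constant must be taken uniformly in $m\geq 1$ and $0\leq\ell\leq 3$, which forces the careful pointwise comparison between $H$ and $(W_m^\ell)^2\comii y^{-1}$ using the precise inductive shape of the weight.
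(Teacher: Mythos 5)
Your reduction of the first (and main) inequality to a weighted Hardy estimate for $\partial_x^m u$ itself has a quantitative gap that I do not think can be closed as written. Write $\rho_{m,\ell}(y)$ for the scalar part of $W_m^\ell$ and set $h=\rho_{m,\ell}^2\comii y^{-2}$, $H(y)=\int_0^y h$. After integrating by parts and substituting $\partial_y\partial_x^m u=f_m+(\partial_y\omega/\omega)\partial_x^m u$, your remainder is bounded by $2C_*^2\int(\partial_x^m u)^2\comii y^{-1}H\,dy$, and absorbing it into the main term $\int(\partial_x^m u)^2 h\,dy$ requires $2C_*^2\sup_y H(y)\comii y\,\rho_{m,\ell}(y)^{-2}<1$. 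But the integrand of $H$ is $e^{4cy}$ times a decreasing factor, so $H(y)\geq\frac{1-e^{-4cy}}{4c}\,\rho_{m,\ell}(y)^2\comii y^{-2}$, and the supremum above is bounded \emph{below} by a positive constant depending only on $c$; since $C_*>1$ is a given datum and not at your disposal, the absorption fails in general (replacing $\comii y^{-2}$ by $(R+y)^{-2}$ does not help, because the obstruction sits on the bounded region $y\lesssim C_*^2/c$). The paper's proof (Lemma \ref{lemma4.2}) avoids this entirely by running the Hardy argument on the quotient $g=\Lambda^{\ell/3}(\partial_x^m u/\omega)$ — exactly the object you introduce at the start and then abandon — so that the $y$-derivative lands on $\partial_y(\partial_x^m u/\omega)=f_m/\omega$ with \emph{no} $(\partial_y\omega/\omega)$ remainder; the only term to be absorbed then comes from the logarithmic derivative of the auxiliary weight $b^R_{m,\ell}(y)=(1+\tfrac{2cy}{(3m+\ell)\sigma})^{-(3m+\ell)\sigma/2}(R+y)^{-\sigma-1}$, which is at most $c+(\sigma+1)R^{-1}<2c$ once $R$ is chosen large, and the monotonicity bound $\omega\geq C_*^{-1}\comii y^{-\sigma}$ enters only afterwards to convert $e^{2cy}b^R_{m,\ell}$ back into $\omega$ times the $W_m^\ell$-weight. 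You should redo your integration by parts with $v=\partial_x^m u/\omega$ and a weight of this tunable form.

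Two further remarks. First, $W_m^\ell$ contains the Fourier multiplier $\Lambda^{\ell/3}$, so it is not a scalar weight and the expression $\int(\partial_x^m u)^2\comii y^{-2}(W_m^\ell)^2\,dy$ does not parse; the commutators of $\Lambda^{\ell/3}$ with $\omega$ and with $\partial_y\omega/\omega$ must be estimated (the paper does this via Lemma \ref{lemma3.1}). Second, your derivation of the last two inequalities from the recursion $f_{m+1}=\partial_x f_m+\bigl[\partial_x(\partial_y\omega/\omega)\bigr]\partial_x^m u$, the boundedness of $\comii y\,\partial_x(\partial_y\omega/\omega)$, and the commutation of $\Lambda^{-1}$ with $\partial_y$ and $W_m^0$ coincides with the paper's argument and is fine — but it relies on \eqref{equ1+a}, so the gap above propagates to them.
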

We will prove the above lemma in the section \ref{section4} as Lemma \ref{lemma4.2}.

\bigskip
Since the initial datum of the equation \eqref{prandtl1} is only in  Sobolev space $H^{N_0+1}$,   we have to introduce the following cut-off function, with respect to $0\leq  t\le T\leq1$,    to study the Gevrey smoothing effect by using the hyopelliticity, 
\begin{eqnarray}\label{pmi++++++}
\phi^\ell_m=\phi^{3(m-(N_0+1))+\ell}=(t(T-t))^{3(m-(N_0+1))+\ell},\quad  m\ge N_0+1,\,\, 0\le \ell\le 3.
\end{eqnarray} 
We will prove  by induction an energy estimate for the function $\phi^0_mW_m^0 f_m.$   For this purpose    
we need the following lemma concerned with the link between  $\phi^0_{m+1}W_{m+1}^0 f_{m+1}$ and   $\phi^3_mW_m^3 f_m$,   whose proof is postponed to the section \ref{section4} as Lemma \ref{equ+} and Lemma \ref{E1}.

\begin{lemma}\label{le+2.2+}
  There exists a constant $C_2$,  depending only on the numbers $\sigma,$ $c$ and  the constant $C_*$ in Theorem \ref{mainthm}, in particular, independents on $m$ , such that for any $m\geq N_0+1$,
\begin{eqnarray*}
	&&\norm{  \phi_{m+1}^0
W^0_{m+1} f_{m+1}}_{L^\infty( [0,T];~L^2( \mathbb
  R_+^2))}+\sum_{j=1}^2\norm{\partial_y^j\Lambda^{-\frac{2(j-1)}{3}} \phi_{m+1}^0
W^0_{m+1} f_{m+1}}_{L^2( [0,T]\times \mathbb
  R_+^2)}  \\
 & \leq& ~C_2\norm{    \phi_{m}^{3}
W^{3}_{m}  f_{m}}_{L^\infty( [0,T];~ L^2(\mathbb
  R_+^2))}+C_2\sum_{j=1}^2\norm{\partial_y^j\Lambda^{-\frac{2(j-1)}{3}}   \phi_{m}^{3}
W^{3}_{m}  f_{m}}_{L^2( [0,T]\times \mathbb
  R_+^2)},
\end{eqnarray*}
and
\begin{eqnarray*} 
	&& \norm{\partial_y^3 \Lambda^{-1} \phi_{m+1}^0
W^0_{m+1} f_{m+1}}_{L^2( [0,T]\times \mathbb
  R_+^2)} \\
 & \leq& ~C_2\norm{    \phi_{m}^{3}
W^{3}_{m}  f_{m}}_{L^\infty( [0,T];~ L^2(\mathbb
  R_+^2))} +C_2\sum_{j=1}^2 \norm{\partial_y^j \Lambda^{-\frac{2(j-1)}{3}} \phi_{m}^3
W^{3}_{m} f_{m}}_{L^2( [0,T]\times \mathbb
  R_+^2)}\\
  &&+C_2 \norm{\partial_y^3\Lambda^{-1} \phi_{m}^3
W^{3}_{m} f_{m}}_{L^2( [0,T]\times \mathbb
  R_+^2)},
\end{eqnarray*}
and
\begin{eqnarray*}
		 \norm{\comii y^{-\sigma/2} \partial_y\Lambda^{1/3}\Lambda^{-2}_\delta W_m^{\ell-1}f_m}_{L^2(\mathbb R_+^2)}\leq  C_2\norm{  \partial_y \Lambda^{-2}_\delta W_m^{\ell}f_m}_{L^2(\mathbb R_+^2)}
+C_2 \norm{  \Lambda^{-2}_\delta W_m^{\ell}f_m}_{L^2(\mathbb R_+^2)}.
	\end{eqnarray*}
\end{lemma}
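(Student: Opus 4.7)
The plan is to exploit the key identity $W_m^3 = W_{m+1}^0 \Lambda$ (both weights share the same polynomial factor with parameter $3(m+1)\sigma$) together with $\phi_m^3 = \phi_{m+1}^0$. Thus the right-hand sides of the first two inequalities contain exactly $\phi_{m+1}^0 W_{m+1}^0 \Lambda f_m$, and the task reduces to comparing $\phi_{m+1}^0 W_{m+1}^0 f_{m+1}$ with $\phi_{m+1}^0 W_{m+1}^0 \Lambda f_m$ in various norms. From the definition \eqref{fmfun+} of $f_m$, applying $\partial_x$ yields
\begin{equation*}
f_{m+1} = \partial_x f_m + \partial_x\!\left(\frac{\partial_y \omega}{\omega}\right) \partial_x^m u,
\end{equation*}
so the first summand, after multiplication by $\phi_{m+1}^0 W_{m+1}^0$, is bounded in $L^2$ by $\norm{\phi_{m+1}^0 W_{m+1}^0 \Lambda f_m}_{L^2} = \norm{\phi_m^3 W_m^3 f_m}_{L^2}$, since $\partial_x$ is dominated by $\Lambda$ in $L^2$. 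The remaining summand involves $\partial_x^m u$, a \emph{lower order} perturbation controlled by $\norm{W_m^0 f_m}_{L^2}$ via the Poincar\'e-type inequality \eqref{equ1+a}, together with the boundedness of $\partial_x(\partial_y\omega/\omega)$ coming from hypothesis \eqref{1.3} combined with the weighted Sobolev bound \eqref{1.5}.

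To obtain the $L^\infty\inner{[0,T]; L^2}$-bound, this procedure is applied pointwise in $t$; for the mixed $L^2\inner{[0,T]\times\RR_+^2}$-bounds, I would differentiate the identity above in $y$ to obtain
\begin{equation*}
\partial_y f_{m+1} = \partial_x\partial_y f_m + \partial_x\partial_y(\partial_y \omega/\omega)\, \partial_x^m u + \partial_x(\partial_y \omega/\omega)\, \partial_x^m\omega,
\end{equation*}
using $\partial_y \partial_x^m u = \partial_x^m \omega$, and analogously for $\partial_y^2$ and $\partial_y^3$. Since $W_{m+1}^0$ depends only on $y$ it commutes with the Fourier multipliers $\Lambda^{-2(j-1)/3}$, and the commutator $[\partial_y, W_{m+1}^0]$ produces a factor $(\log W_{m+1}^0)'(y)$ that is bounded uniformly in $m$ thanks to the explicit form \eqref{wmi+++++}. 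Each top-order piece yields the desired $\partial_y^j \Lambda^{-2(j-1)/3}\phi_m^3 W_m^3 f_m$ contribution, while the lower-order commutators are absorbed using Lemma \ref{lemma2.1}.

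For the third inequality, I rewrite
\begin{equation*}
\Lambda^{1/3} W_m^{\ell-1} = \frac{A_m^{\ell-1}(y)}{A_m^{\ell}(y)} \, W_m^{\ell}, \qquad A_m^{\ell}(y) := \left(1 + \frac{2cy}{(3m+\ell)\sigma}\right)^{-(3m+\ell)\sigma/2},
\end{equation*}
and since the ratio is a function of $y$ only it commutes with $\Lambda^{-2}_\delta$. The Leibniz rule then gives
\begin{equation*}
\partial_y\Lambda^{1/3} \Lambda^{-2}_\delta W_m^{\ell-1} f_m = \frac{A_m^{\ell-1}}{A_m^{\ell}} \, \partial_y \Lambda^{-2}_\delta W_m^{\ell} f_m + \left(\frac{A_m^{\ell-1}}{A_m^{\ell}}\right)'\, \Lambda^{-2}_\delta W_m^{\ell} f_m,
\end{equation*}
so it suffices to verify the two uniform-in-$m,\ell$ bounds $\comii y^{-\sigma/2}\bigl(A_m^{\ell-1}/A_m^{\ell}\bigr) \leq C$ and $\comii y^{-\sigma/2}\bigl|(A_m^{\ell-1}/A_m^{\ell})'\bigr| \leq C$. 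A direct computation with $\log(A_m^{\ell-1}/A_m^{\ell})$ shows the ratio grows no faster than $\comii y^{\sigma/2}$ at infinity and its $y$-derivative is $O(\comii y^{\sigma/2-1})$, yielding both bounds.

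The main obstacle I anticipate is the bookkeeping required to ensure that all constants are uniform in $m$ (essential for the inductive proof of Theorem \ref{mainthm}), together with the careful handling of the commutators between the $y$-dependent weight $W_{m+1}^0$, the Fourier multipliers $\Lambda^{-2(j-1)/3}$, and the derivatives $\partial_x, \partial_y$; each commutator produces a term that must either be matched with one of the right-hand-side norms or absorbed via the Poincar\'e-type estimates of Lemma \ref{lemma2.1}.
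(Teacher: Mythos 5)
Your proposal is correct and follows essentially the same route as the paper: the first two estimates are proved exactly as in the paper's Lemma \ref{E1}, via the identity $f_{m+1}=\partial_x f_m+\bigl[\partial_x\bigl((\partial_y\omega)/\omega\bigr)\bigr]\partial_x^m u$, the relations $\phi_m^3 W_m^3=\phi_{m+1}^0W_{m+1}^0\Lambda$ and the weight-monotonicity inequalities, the Poincar\'e-type Lemma \ref{lemma2.1} for the lower-order term, and the commutator bounds \reff{comfm}--\reff{+comfm++} for the $\partial_y^j$ factors. The third estimate is likewise the paper's Lemma \ref{equ+}: factor $\Lambda^{1/3}W_m^{\ell-1}=a_{m,\ell}(y)W_m^{\ell}$ and check that $a_{m,\ell}$ and $\partial_y a_{m,\ell}$ are $O(\comii y^{\sigma/2})$ uniformly in $m$.
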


\bigskip
Now we prove Theorem \ref{mainthm} by induction on the estimate of $\phi^0_mW^0_mf_m$.  The procedure of induction is as follows.

\smallskip
\noindent{\bf Initial hypothesis of the induction.} From the hypothesis \eqref{1.3} and \reff{1.5} of Theorem 
\ref{mainthm}, we have firstly, in view of \reff{fmfun+}, 
 \begin{equation} \label{15042225}
0\leq m \le N_0+1,\quad  \norm{e^{2cy}  f_m}_{L^\infty\inner{[0,T];~L^2(\RR^2_+)}}+ \sum^3_{i=1}\norm{e^{2cy}\partial^i_y  f_m}_{L^2\inner{[0,T]\times \RR^2_+}}<C_0.
\end{equation}

\smallskip
\noindent{\bf Hypothesis of the induction.}
Suppose that there exists $A>C_0+1$ such that, for some  $m\ge N_0+1$ and  for any $N_0+1\leq k\leq m$, we have  
\begin{equation}\label{2.6-}
\partial^3_y \Lambda^{-1} \phi^0_k W^0_k  f_k\in 	L^{2}([0,T]\times \RR^2_+),
\end{equation}
\begin{equation}\label{2.6}
\| \phi^0_k W^0_k  f_k\|_{L^\infty\inner{[0,T];~L^{2}(\RR^2_+)}}
+\sum^2_{j=1}\|\partial^j_y\Lambda^{-\frac{2(j-1)}{3}} \phi^0_k  W^0_kf_k\|_{L^{2}([0,T]\times \RR^2_+)} \leq A^{k-5 }\inner{(k-5)! }^{3(1+\sigma)}.
\end{equation}

\smallskip
\noindent {\bf Claim $\bm{I_{m+1}}$:}  we claim that \eqref{2.6-} and \eqref{2.6} are also true for $m+1$.    As a result,  \eqref{2.6-} and \eqref{2.6} hold for all $k\geq N_0+1$ by induction.

\smallskip
\begin{proof}[{\bf Completeness of the proof for Theorem \ref{mainthm}}].
  
Before proving the above {\bf Claim $\bm{I_{m+1}}$},  we remark    that  Theorem  \ref{mainthm}  is just its immediate consequence.   Indeed,  induction processus imply that for any $m>1+N_0$, we have for any $0<t<T$,
\begin{eqnarray*}
   \big\|  \phi_{m}^0 W_{m}^0  f_m (t) \big\|_{ L^2(\RR^2_+)}\leq  A^{m-5 }\inner{(m-5)! }^{3(1+\sigma)}\leq  A^{m}\inner{m! }^{3(1+\sigma)},
 \end{eqnarray*}
then with \reff{wmi+++++},  \eqref{2.3}, \eqref{equ1+a} and  \reff{pmi++++++}, we get 
 \begin{eqnarray*}
  \forall~0<t\leq T_1<T\leq 1,\quad	t^{3(m-N_0-1)} \norm{ e^{\tilde cy}   \p^{m} u}_{ L^2(\RR^2_+)} \leq \inner{T-T_1}^{-3(m-N_0-1)} \big\|  \phi_{m}^0 W_{m}^0  f_m \big\|_{ L^2(\RR^2_+)},
\end{eqnarray*}
yields,  for any   $m >N_0+1$, 
  \begin{eqnarray*}
  \forall~0<t\leq T_1<T\leq 1,\qquad	t^{3(m-N_0-1)} \norm{ e^{\tilde cy}   \p^{m} u}_{ L^2(\RR^2_+)} &\leq& \inner{T-T_1}^{-3(m-N_0-1)} A^{m}\inner{m! }^{3(1+\sigma)}\\
  &\leq& \inner{T-T_1}^{-3m} A^{m}\inner{m! }^{3(1+\sigma)}. 
  \end{eqnarray*}
As a result,    Theorem  \ref{mainthm} follows if we take $L=(T-T_1)^{-3}A$.     
\end{proof}

\medskip
Now we begin to prove  {\bf Claim $\bm{I_{m+1}}$},  and to do so it is sufficient to prove that the following:
 
\indent 
{\bf Claim $\bm{E_{m,\ell}}, 0\le \ell\le 3$: }  The following property hold for $0\le \ell\le 3$,
\begin{eqnarray}
&\partial^3_y \Lambda^{-1} \phi^\ell_m W^\ell_m  f_m\in 	L^{2}([0,T]\times \RR^2_+),\nonumber\\
&\| \phi^\ell_m W^\ell_m  f_m\|_{L^\infty\inner{[0,T];~L^{2}(\RR^2_+)}}
+\sum^2_{j=1}\|\partial^j_y\Lambda^{-\frac{2(j-1)}{3}} \phi^\ell_m  W^\ell_m   f_m\|_{L^{2}([0,T]\times \RR^2_+)}\label{2.6+}\\
 &\leq A^{m-5 +\frac \ell 6}\inner{(m-5)! }^{3(1+\sigma)}(m-4)^{\ell(1+\sigma)}.\nonumber
\end{eqnarray}
In fact,   {\bf Claim $\bm{E_{m,3}}$}  yields   $\partial^3_y \Lambda^{-1} \phi^3_m W^3_m  f_m\in 	L^{2}([0,T]\times \RR^2_+)$ and 
\begin{eqnarray*}
	&&\| \phi^3_m W^3_m  f_m\|_{L^\infty\inner{[0,T];~L^{2}(\RR^2_+)}}
+\sum^2_{j=1}\|\partial^j_y\Lambda^{-\frac{2(j-1)}{3}} \phi^3_m  W^3_m   f_m\|_{L^{2}([0,T]\times \RR^2_+)}\\
& \leq& A^{m-5 +\frac 1 2}\inner{(m-5)! }^{3(1+\sigma)}(m-4)^{3(1+\sigma)}\\
 &= & A^{m-5 +\frac 1 2}\com{\big((m+1)-5\big)! }^{3(1+\sigma)},
\end{eqnarray*}
which, along with Lemma \ref{le+2.2+},  yields   $\partial^3_y \Lambda^{-1} \phi^0_{m+1} W^0_{m+1}  f_{m+1}\in 	L^{2}([0,T]\times \RR^2_+)$ and 
\begin{eqnarray*}
	&&\| \phi^0_{m+1} W^0_{m+1}  f_{m+1}\|_{L^\infty\inner{[0,T];~L^{2}(\RR^2_+)}}
+\sum^2_{j=1}\|\partial^j_y\Lambda^{-\frac{2(j-1)}{3}} \phi^0_{m+1}  W^0_{m+1}   f_{m+1}\|_{L^{2}([0,T]\times \RR^2_+)}\\
 &\leq&  C_2A^{m-5 +\frac 1 2}\com{\big((m+1)-5\big)! }^{3(1+\sigma)},
\end{eqnarray*}
 recalling  $C_2$ is a constant depending only on the numbers $\sigma,$ $c$ and the constants  $C_0, C_*$ in Theorem \ref{mainthm}.   As a result,  if we choose $A$ in such a way that
 \begin{eqnarray*}
 	A^{1/2}\geq C_2,
 \end{eqnarray*} 
 then we see \reff{2.6} is also valid  for $k=m+1$.   Thus the desired   {\bf Claim $\bm{I_{m+1}}$} follows.  

 \bigskip

\noindent
{\bf Proof of the Claim $\bm{E_{m,\ell}}$ }. 

The rest of this section is devoted to proving Claim $\bm{E_{m,\ell}}$ holds for all $ {0\leq \ell\leq 3}$,  supposing   the inductive hypothesis \reff{2.6-} and \reff{2.6-} hold.   
 
 We will prove {\bf Claim $\bm{E_{m,\ell}}$} by iteration on $0\le \ell\le 3$.   Obviously  {\bf Claim $\bm{E_{m,0}}$} holds, due to the hypothesis of  induction  \reff{2.6-} and    \reff{2.6} with $k=m$.  Now supposing {\bf Claim $\bm{E_{m,i}}$} holds for all $0\leq i\leq \ell-1$, i.e.,  for  all $0\le i\le \ell-1$ we have 
\begin{eqnarray}
&\partial^3_y \Lambda^{-1} \phi^i_m W^i_m  f_m\in 	L^{2}([0,T]\times \RR^2_+),\nonumber\\
&\| \phi^i_m W^i_m  f_m\|_{L^\infty\inner{[0,T];~L^{2}(\RR^2_+)}}
+\sum^2_{j=1}\|\partial^j_y\Lambda^{-\frac{2(j-1)}{3}} \phi^i_m  W^i_m   f_m\|_{L^{2}([0,T]\times \RR^2_+)}\label{+ini}\\
& \leq A^{m-5 +\frac i 6}\inner{(m-5)! }^{3(1+\sigma)}(m-4)^{i(1+\sigma)},\nonumber
\end{eqnarray}
we will prove in the remaining part {\bf Claim $\bm{E_{m,\ell}}$} also holds.  To do so, we first introduce the mollifier $\Lambda_\delta^{-2}=\Lambda_{\delta, x}^{-2}$ which is the Fourier multiplier with the symbol $\comii{\delta \xi}^{-2}$,  $0<\delta<1$, and then consider the function  $F=\Lambda_\delta^{-2} \phi^{\ell}_{m}W_{m}^\ell f_{m}$. Under the inductive assumption \reff{+ini}, we see  $F$ is a classical solution to the following problem ( See the detail computation in Section \ref{apen} and the equation \reff{eqnfms} fulfilled by $f_m$ ): 
  \begin{eqnarray}\label{2.16}
\left\{
\begin{array}{l}
\inner{\partial_t +u\p +v\partial_y-\partial^2_y}F=\mathcal{Z}_{m,\ell, \delta},\\
	\partial_y F\big|_{y=0}=0,\\
	 F\big|_{t=0}=0,
\end{array}
\right.  
\end{eqnarray}
where
\begin{eqnarray}\label{2.18a}
\mathcal{Z}_{m,\ell,\delta}= \Lambda_\delta^{-2} \phi^{\ell}_{m}W^\ell_m
\mathcal{Z}_{m}+\Lambda_\delta^{-2} \left(\partial_t\phi^{\ell}_{m}\right)W^\ell_{m}f_{m}+\com{u \partial_x+v\partial_y-\partial_y^2, ~\Lambda_\delta^{-2}\phi^\ell_m W_m^\ell}  f_m ,
\end{eqnarray}
with   $\mathcal{Z}_{m}$  given in the appendix (seeing Section \ref{apen}),  that is, 
\begin{eqnarray*}
\mathcal{Z}_{m}&=&-\sum_{j=1}^m{m\choose j}(\p^j u) f_{m+1-j}
-\sum_{j=1}^{m-1}{m\choose j}(\p^j v)(\partial_y f_{m-1})\\
&&-\left[\partial_y\inner{\frac{\partial_y\omega}{\omega}}\right]\sum_{j=1}^{m-1}{m\choose j}
(\p^j v)(\p^{m-j}u)
-2\left[\partial_y\inner{\frac{\partial_y\omega}{\omega} }\right]f_m.
\end{eqnarray*}
 The initial value and boundary value in \eqref{2.16} is take in the sense of trace in Sobolev space,  due to the  induction hypothesis \eqref{2.6}  and the facts that $\partial_y \Lambda_\delta^{-2} \phi^{\ell}_{m}  f_{m}|_{y=0}=0$ (seeing \reff{15042201} in the appendix) and 
\begin{eqnarray*}
	\partial_y\bigg(e^{2cy}\inner{1+\frac{2cy}{(3m+i)\sigma} }^{-\inner{3m+i}\sigma/2}
(1+cy)^{-1}\bigg)\bigg|_{y=0}=0.
\end{eqnarray*}

We will prove    an energy estimate for the equation \eqref{2.16}.   For this purpose,   let $t\in[0,T]$, and take $L^2\inner{ [0,t]\times\mathbb R_+^2}$ inner product with  $F$ on both sides of the first equation in \reff{2.16};  this gives 
\begin{eqnarray*}
{\rm  Re}~\inner{ \inner{\partial_t +u\p +v\partial_y-\partial^2_y}F,~F}_{L^2\inner{[0,t]\times\mathbb R_+^2}}
  = {\rm  Re}~\inner{\mathcal{Z}_{m,\ell, \delta},~F}_{L^2\inner{[0,t]\times\mathbb R_+^2}}.
\end{eqnarray*}
Moreover observing  the initial-boundary conditions in \reff{2.16} and the facts that $u|_{y=0}=v|_{y=0}=0$ and  $\partial_x u+\partial_y v=0,$  we integrate by parts to obtain,  
\begin{eqnarray*}
{\rm  Re}~\inner{ \inner{\partial_t +u\p +v\partial_y-\partial^2_y}F,~F}_{L^2\inner{[0,t]\times\mathbb R_+^2}}= \frac{1}{2} \|F(t)\|_{L^2(\RR^2_+)}^2
+\int_0^t\| \partial_y F(t)\|^2_{L^2(\RR^2_+)}dt. 
\end{eqnarray*}
Thus we infer
\begin{eqnarray*}
	\|F\|_{L^\infty\inner{[0,T];~L^2(\RR^2_+)}}^2+\|\partial_y F \|_{L^2\inner{[0,T]\times \RR^2_+}}^2
	\leq 2 \abs{ \inner{\mathcal{Z}_{m,\ell, \delta},~F}_{L^2\inner{[0,T]\times\mathbb R_+^2}}},
\end{eqnarray*}
and thus 
\begin{eqnarray}\label{enest+}
\begin{split}
	&\|F\|_{L^\infty\inner{[0,T];~L^2(\RR^2_+)}}^2+\|\partial_y F \|_{L^2\inner{[0,T]\times \RR^2_+}}^2 +\|\partial_y^2\Lambda^{-2/3}F \|_{L^2\inner{[0,T]\times \RR^2_+}}^2 
\\	\leq & ~2 \abs{ \inner{\mathcal{Z}_{m,\ell, \delta},~F}_{L^2\inner{[0,T]\times\mathbb R_+^2}}}+\|\partial_y^2\Lambda^{-2/3}F \|_{L^2\inner{[0,T]\times \RR^2_+}}^2\\	\leq & ~2 \norm{\phi^{1/2}  \mathcal{Z}_{m,\ell, \delta}}_{L^2\inner{[0,T]\times\mathbb R_+^2}}\norm{\phi^{-1/2} F}_{L^2\inner{[0,T]\times\mathbb R_+^2}}+\|\partial_y^2\Lambda^{-2/3}F \|_{L^2\inner{[0,T]\times \RR^2_+}}^2. 
\end{split}
\end{eqnarray}
In order to treat the first term on the right hand side,  we need the following proposition, whose proof is postponed to Section \ref{section5}.
    
 \begin{proposition}\label{nonli}
Under the induction hypothesis \reff{15042225} -\eqref{2.6} and \eqref{+ini},  there exists a constant $C_3$,   such that,  using the notation   $F=\Lambda_\delta^{-2} \phi^{\ell}_{m} W^\ell_{m} f_{m}$ and $\tilde f=\phi^{1/2} \Lambda_\delta^{-2} \phi^{\ell-1}_{m} W^{\ell-1}_{m} f_{m}$ with $\phi$  defined in \reff{pmi++++++},
\begin{eqnarray}
&& \norm{  \phi^{1/2} \mathcal Z_{m,\ell, \delta}}_{L^2\inner{[0,T]\times\mathbb R_+^2}}\nonumber\\
 &\leq & m C _3 \norm{  \phi^{-1/2}  F} _{L^2\inner{[0,T]\times\mathbb R_+^2}}+C_3 \norm{  \partial_y F}_{L^2\inner{[0,T]\times\mathbb R_+^2}}+ C_3  A^{m-6}  \inner{(m-5)!}^{3(1+\sigma)}  \label{2.20},
  \end{eqnarray}
  and
  \begin{eqnarray}
&& \norm{ \Lambda^{-1/3}  \phi^{1/2}\mathcal Z_{m,\ell-1, \delta}}_{L^2\inner{[0,T]\times\mathbb R_+^2}}\nonumber\\
 &\leq & m C _3  \norm{ \Lambda^{-1/3} \phi^{-1/2}\Lambda_\delta^{-2} \phi^{\ell-1}_{m} W^{\ell-1}_{m} f_{m} } _{L^2\inner{[0,T]\times\mathbb R_+^2}}\label{+2.20++}\\
 &&+  C_3\norm{\partial_y \Lambda^{-1/3}   \Lambda_\delta^{-2}  \phi^{\ell-1}_{m} W^{\ell-1}_{m} f_{m} }_{L^2\inner{[0,T]\times\mathbb R_+^2}} + C_3  A^{m-6}  \inner{(m-5)!}^{3(1+\sigma)} \nonumber,
  \end{eqnarray}
 and
 \begin{eqnarray}
&& \norm{ \Lambda^{-\frac{2}{3}}\partial_y \phi^{1/2}\mathcal Z_{m,\ell-1, \delta}}_{L^2\inner{[0,T]\times\mathbb R_+^2}}\nonumber\\
 && \leq C_3 \| \comii y^{-\sigma}\Lambda^{1/3}\tilde f \|_{L^2([0,T]\times\RR^2_+)}+C_3\norm{ \partial_y^2 \Lambda^{-2/3}\tilde f}_{L^2\inner{[0,T]\times\mathbb R_+^2}}\label{2.21}\\
 &&\quad +m\,C_3\inner{ \norm{ \Lambda^{-2/3}  \phi^{\ell-1}_{m} W^{\ell-1}_{m} f_{m}}_{L^2\inner{[0,T]\times\mathbb R_+^2}}+\norm{ \Lambda^{-2/3}\phi^{-1/2}  \partial_y    \phi^{\ell-1}_{m} W^{\ell-1}_{m} f_{m} }_{L^2\inner{[0,T]\times\mathbb R_+^2}}}\nonumber\\
 &&\quad+ C_3 A^{m-6}  \inner{(m-5)!}^{3(1+\sigma)}.
 \nonumber
  \end{eqnarray} 
The constant $C_3$ depends only on $\sigma,$ $c$, and the constant $C_*$,  but is  independent of  $m$ and $\delta.$ 
\end{proposition}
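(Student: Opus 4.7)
I would split $\mathcal Z_{m,\ell,\delta}$ into its three pieces from \eqref{2.18a}---the cutoff time-derivative $\Lambda_\delta^{-2}(\partial_t\phi^\ell_m)W^\ell_m f_m$, the transport-diffusion commutator $[u\partial_x+v\partial_y-\partial_y^2,\Lambda_\delta^{-2}\phi^\ell_m W^\ell_m]f_m$, and the nonlinear part $\Lambda_\delta^{-2}\phi^\ell_m W^\ell_m \mathcal Z_m$---and estimate each separately.  Since $\phi^\ell_m$ depends only on $t$ and $\Lambda_\delta^{-2}$ is a Fourier multiplier in $x$, writing $W^\ell_m=\tilde W^\ell_m(y)\,\Lambda^{\ell/3}$, all substantive commutators reduce to $y$-derivatives of $\tilde W^\ell_m$ (whose ratios $\partial_y^k\tilde W^\ell_m/\tilde W^\ell_m$ are uniformly bounded in $m$ because the exponential factor dominates the polynomial correction in \eqref{wmi+++++}) and to the Calder\'on commutators $[u,\Lambda_\delta^{-2}\Lambda^{\ell/3}]\partial_x$ and $[v,\Lambda_\delta^{-2}\Lambda^{\ell/3}]\partial_y$, which are bounded in $L^2$ uniformly in $\delta$ using \eqref{1.5} and the divergence identity $v=-\int_0^y\partial_x u\,dy'$.

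The cutoff time-derivative piece is controlled directly by $|\partial_t\phi^\ell_m|\lesssim(3m+\ell)\phi^{\ell-1}_m$, giving exactly the $mC_3\|\phi^{-1/2}F\|$ contribution on the right of \eqref{2.20}.  The main work lies in the nonlinear part, where I would apply a low-high split at $j=N_0+1$ to each of the four sums in $\mathcal Z_m$.  For $1\le j\le N_0+1$ I place $\partial_x^j u$ (or $\partial_x^j v$ via the divergence identity) in $L^\infty_{t,x,y}$ using the Sobolev embedding from \eqref{1.5}, while $f_{m+1-j}$ or $\partial_y f_{m-1}$ goes in the norm supplied by the inductive hypotheses \eqref{2.6}/\eqref{+ini} at the large index $m+1-j\le m$; Lemma \ref{lemma2.1} converts between the norms of $f_k$, $\partial_x^k\omega$ and $\partial_x^k u$ without loss of $\langle y\rangle^\sigma$-weight.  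For $j\ge N_0+2$ I reverse the roles, placing the $f$-factor at the low index in $L^\infty_y L^2_x$ by a Sobolev trace in $y$ (the inductive hypothesis controls $\|\partial_y f_{m+1-j}\|$), and $\partial_x^j u$ at the large index in $L^2$ via \eqref{equ1+a}.  The coefficient terms with $\partial_y(\partial_y\omega/\omega)=O(\langle y\rangle^{-2})$ from \eqref{1.3} are handled identically and are in fact easier.  The combinatorics collapses by the Gevrey binomial estimate
\begin{eqnarray*}
\sum_{j}\binom{m}{j}\bigl[(j-5)!\,((m-j)-5)!\bigr]^{3(1+\sigma)}\,\le\,C_\sigma\,((m-5)!)^{3(1+\sigma)},
\end{eqnarray*}
which is valid because $3(1+\sigma)>9/4$ for $\sigma>1/2$, while the isolated $j=1$ contribution carries its natural factor of $m$ into the $mC_3\|\phi^{-1/2}F\|$ slot and the extra $(m-4)^{i(1+\sigma)}$ factors from \eqref{+ini} are absorbed into the free power $A^{\ell/6}$.

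The two variants \eqref{+2.20++} and \eqref{2.21} follow the same template after moving the auxiliary multipliers $\Lambda^{-1/3}$ or $\Lambda^{-2/3}\partial_y$ through $\Lambda_\delta^{-2}\phi^\ell_m$ (free) and through $W^\ell_m$ (another commutator of the type above).  In \eqref{2.21} the extra $y$-derivative either lands on the $f$-factor and is absorbed into $\|\partial_y^2\Lambda^{-2/3}\tilde f\|$ and $\|\langle y\rangle^{-\sigma/2}\Lambda^{1/3}\tilde f\|$ via the third inequality of Lemma \ref{le+2.2+}, or lands on $u,v$ and is controlled by \eqref{1.3}.  The principal obstacle is the combinatorial bookkeeping in the nonlinear term: one must verify that the split point, the weight conversions supplied by Lemma \ref{lemma2.1}, and the polynomial-in-$m$ corrections from \eqref{+ini} are mutually consistent with no net loss of factorial growth or of $\langle y\rangle^\sigma$-weight.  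A secondary delicate point is the $\delta$-uniform estimate on $[v,\Lambda_\delta^{-2}\Lambda^{\ell/3}]\partial_y$, for which the divergence identity is essential in order to trade the $\partial_y$ for a $\partial_x$ before invoking the standard Calder\'on commutator bound.
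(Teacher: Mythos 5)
Your overall strategy coincides with the paper's: the same three-way splitting of $\mathcal Z_{m,\ell,\delta}$ from \eqref{2.18a}, the commutator Lemma \ref{lemma3.1} together with uniform bounds on $\partial_y^k W^\ell_m/W^\ell_m$ for the cutoff and commutator pieces, and a low--high splitting of the nonlinear sums combined with a Gevrey binomial convolution. There are, however, two places where the sketch as written would fail. First, the top-end terms --- $j=m$ in $\sum\binom{m}{j}(\partial_x^ju)f_{m+1-j}$ and $j=m-1,m-2$ in $\sum\binom{m}{j}(\partial_x^jv)\partial_yf_{m-j}$ --- involve $\partial_x^mu$ (equivalently $\partial_x^{m-1}v=-\int_0^y\partial_x^mu\,dy'$) under the full weight $W^\ell_m$, which by the Poincar\'e inequality \eqref{equ1+a} is controlled only by $W^\ell_mf_m$, i.e.\ by the unknown $F$ itself; this quantity is \emph{not} covered by \eqref{2.6} or \eqref{+ini} (which stop at $W^{\ell-1}_m$), so these terms cannot be absorbed into the remainder $C_3A^{m-6}((m-5)!)^{3(1+\sigma)}$ by your binomial estimate. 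They are the second source, besides $j=1$ and $\partial_t\phi^\ell_m$, of the $mC_3\|\phi^{-1/2}F\|$ term in \eqref{2.20}; your sketch assigns the whole range $j\ge N_0+2$ to the factorial remainder and misses them. The paper peels off exactly these terms first (step (a) of the proof of Lemma \ref{lem1}) before splitting the remaining range at $[m/2]$.

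Second, the H\"older pairing you propose for $j\ge N_0+2$ --- the $f$-factor in $L^\infty_yL^2_x$ against $\partial_x^ju$ in $L^2$ --- does not bound the product in $L^2(\mathbb R^2_+)$: one factor must be in $L^\infty_x$ pointwise in $y$. The device actually needed is the mixed pairing $L^\infty_yL^2_x\times L^2_yL^\infty_x$, with the $L^2_yL^\infty_x$ norm --- and hence the extra $x$-derivative it costs through the one-dimensional Sobolev embedding in $x$ --- placed on whichever factor has the \emph{smaller} $x$-index. When $j$ is close to $m$, the high factor $\partial_x^{j+1}v$ must therefore go in $L^\infty_yL^2_x$ (which costs no $x$-derivative, thanks to the divergence identity and \eqref{equ2}), since Sobolev-embedding it in $x$ would require an index beyond $m$, unavailable from the induction. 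Your assignment reverses this. Two lesser points: for $j=N_0,N_0+1$ the $H^{N_0}$ bound \eqref{1.5} alone does not give $\partial_x^ju\in L^\infty(\mathbb R_+^2)$, so one must already invoke \reff{15042225} and \eqref{2.6} there; and the $(m-4)^{i(1+\sigma)}$ factors from \eqref{+ini} cannot be ``absorbed into $A^{\ell/6}$'' since they grow with $m$ --- fortunately the nonlinear estimates only ever need \eqref{2.6}, where no such factors occur.
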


 \smallskip
Now combining \reff{2.20} in the above proposition and \reff{enest+}, we have 
\begin{eqnarray*}
	&&\|F\|_{L^\infty\inner{[0,T];~L^2(\RR^2_+)}}^2+\sum_{j=1}^2\norm{\partial_y^j\Lambda^{-\frac{2(j-1)}{3}} F }_{L^2\inner{[0,T]\times \RR^2_+}}^2  \\
	&\leq & 2m C _3 \norm{  \phi^{-1/2} F} _{L^2\inner{[0,T]\times\mathbb R_+^2}}^2+(2C_3)^2 \norm{   \phi^{-1/2} F} _{L^2\inner{[0,T]\times\mathbb R_+^2}}^2+\frac{1}{2}\norm{   \partial_y F} _{L^2\inner{[0,T]\times\mathbb R_+^2}}^2\\
	&&  \inner{ A^{m-6 }  \inner{(m-5)!}^{3(1+\sigma)}  }^2+  \|\partial_y^2\Lambda^{-2/3} F\|_{L^2\inner{[0,T]\times \RR^2_+}}^2,
\end{eqnarray*}
which yields, 
 denoting by $C_4= 4C_3+10 C_3^2+2, $
 \begin{eqnarray*}
	&&\|F\|_{L^\infty\inner{[0,T];~L^2(\RR^2_+)}}^2+\sum_{j=1}^2\norm{\partial_y^j\Lambda^{-\frac{2(j-1)}{3}} F }_{L^2\inner{[0,T]\times \RR^2_+}}^2  \\
	&\leq &  m C _4 \norm{  \phi^{-1/2} F} _{L^2\inner{[0,T]\times\mathbb R_+^2}}^2  +  2 \inner{ A^{m-6 }  \inner{(m-5)!}^{3(1+\sigma)}  }^2+ 2 \|\partial_y^2\Lambda^{-2/3} F\|_{L^2\inner{[0,T]\times \RR^2_+}}^2,
\end{eqnarray*}
or equivalently, 
\begin{eqnarray}
	&&
\|F\|_{L^\infty\inner{[0,T];~L^2(\RR^2_+)}}+\sum_{j=1}^2\norm{\partial_y^j\Lambda^{-\frac{2(j-1)}{3}} F }_{L^2\inner{[0,T]\times \RR^2_+}}\nonumber  \\
	&&\leq C_4  \inner{m^{1/2}  \norm{  \phi^{-1/2} F} _{L^2\inner{[0,T]\times\mathbb R_+^2}}+ \|\partial_y^2\Lambda^{-2/3} F \|_{L^2\inner{[0,T]\times \RR^2_+}}}  +  2  A^{m-6 }  \inner{(m-5)!}^{3(1+\sigma)}  .\label{bigf}
\end{eqnarray}
It remains to treat the right terms on the right hand side. To do so we need to study  the subellipticity of the linearized Prandtl equation :
\begin{eqnarray}\label{eqf1}
\mathcal P f= \partial_t  f+u\partial_x f +v\partial_y f  -\partial_y^2
    f=h, &\quad (t,x,y)\in   ]0, T[\times \mathbb R_+^2,
\end{eqnarray}
where $u, v$ is solution of Prandtl's equation \eqref{prandtl1} satisfying the condition 
\eqref{1.3} and \eqref{1.5}. Then we have

\begin{proposition}\label{subtang}
Let $h, g\in L^2(  [0, T]\times \mathbb R_+^2)$ be given such that  $\partial_yh, \partial_y g \in L^2(  [0, T]\times \mathbb R_+^2)$.
Suppose that  $f\in L^2\inner{[0,T];~
  H^2(\mathbb R_+^2)}$  with $\partial_y^3f\in L^2\inner{[0,T]\times\mathbb R_+^2}$,  is  a classical solution to the equation \eqref{eqf1}
with the following initial and boundary conditions :
\begin{eqnarray}\label{ib}
f(0,x,y)=f(T,x,y)=0, \quad (x,y)\in\mathbb R_+^2,
\end{eqnarray}
and 
\begin{eqnarray}\label{bdc+}
\partial_y  f(t,x,0)=0,  \quad \partial_t f(t,x,0)=\inner{\partial_y^2  f}(t,x,0)+g(t,x,0),\quad  (t,x)\in ]0, T[\times \mathbb R.
\end{eqnarray}
Then for any $\eps>0$   there exists a constant $C_\eps$,  depending  only on   $\eps, \sigma$ and the  constants $C_*$ ,   such that 
\begin{eqnarray}
&& \norm{\comii y^{-\sigma/2}   \Lambda^{1/3}f}_{L^2(   [0, T]\times \RR_+^2)}+\norm{ \partial_y^2 \Lambda^{-1/3}f}_{L^2(   [0, T]\times \RR_+^2)}\nonumber \\
& \leq &  \eps   \norm{\Lambda^{-2/3}\partial_y h}_{L^2(\mathbb R^2)}   +
     C_\eps~   \Big(\norm{
       \Lambda^{-1/3} h }_{L^{2}(  [0, T]\times \RR_+^2)}+\norm{\partial_y f}_{L^{2}(  [0, T]\times \RR_+^2)}+\norm{ f}_{L^{2}(  [0, T]\times \mathbb R_+^2)}  \Big)\label{2.12}\\
&& +
     C_\eps ~  \Big( \norm{\comii y^{-{\sigma\over2}}  \partial_y \Lambda^{1/6}  f}_{L^2(   [0, T]\times \mathbb R_+^2)}+\norm{\comii y^\sigma\Lambda^{-1/3}  \partial_y g}_{L^2(   [0, T]\times \mathbb R_+^2)}\Big)\, .\nonumber
\end{eqnarray}
Moreover
\begin{eqnarray*}
&& \norm{ \partial_y^3 \Lambda^{-2/3}f}_{L^2(   [0, T]\times \RR_+^2)} \\
& \leq &      \tilde  C   \Big(\norm{\comii y^{-\sigma/2}   \Lambda^{1/3}f}_{L^2(   [0, T]\times \RR_+^2)}+ \norm{\Lambda^{-2/3}\partial_y h}_{L^2(\mathbb R^2)}  +\norm{\partial_y f}_{L^{2}(  [0, T]\times \RR_+^2)}+\norm{ f}_{L^{2}(  [0, T]\times \mathbb R_+^2)}  \Big),
\end{eqnarray*}
where $\tilde C$ is a constant depending only on $\sigma, c,$ and $C_*, C_0$ in Theorem \ref{mainthm}.
\end{proposition}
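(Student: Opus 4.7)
The strategy is to treat $\mathcal{P}=\partial_t+u\p+v\partial_y-\partial_y^2$ as an operator of H\"ormander type with ``diffusion'' $\partial_y$ and ``transport'' $X_0=\partial_t+u\p+v\partial_y$. The single commutator
\[
[X_0,\,\partial_y]=-(\partial_y u)\,\p-(\partial_y v)\,\partial_y
\]
produces the missing $\p$ direction with coefficient $\partial_y u$, which is bounded below by $C_*^{-1}\comii y^{-\sigma}$ by hypothesis \eqref{1.3}. The standard subelliptic counting (diffusion of order $2$, transport of order $1$, one bracket needed to recover $\p$) predicts a gain of $1/3$ of tangential derivative, weighted by $\comii y^{-\sigma/2}$, which is exactly what \eqref{2.12} encodes.

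For the first estimate I plan to start from monotonicity and the bound
\[
\norm{\comii y^{-\sigma/2}\Lambda^{1/3}f}^2 \le C_*\,\bigl\langle\partial_y u\,\Lambda^{2/3}f,\,f\bigr\rangle,
\]
and then rewrite $\partial_y u\,\p=[X_0,\partial_y]-(\partial_y v)\partial_y=[X_0,\partial_y]+(\p u)\partial_y$ using the divergence-free condition $\p u+\partial_y v=0$. Distributing the multiplier as $\Lambda^{2/3}=\Lambda^{-1/3}\,\mathrm{sgn}(D_x)\,\p$, the pairing reduces to
\[
\bigl\langle[X_0,\partial_y]\Lambda^{-1/3}f,\,\mathrm{sgn}(D_x)f\bigr\rangle + (\text{commutator remainders}) + (\text{lower-order terms involving }\partial_y f).
\]
Expanding the bracket as $X_0\partial_y-\partial_y X_0$, each piece is integrated by parts: in $t$ using $f(0)=f(T)=0$, and in $y$ using $\partial_y f|_{y=0}=0$ together with the compatibility \eqref{bdc+}; $X_0 f$ is then replaced by $h+\partial_y^2 f-\partial_t f$ from \eqref{eqf1}. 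This produces precisely the right-hand side terms $\norm{\Lambda^{-1/3}h}$, $\norm{\partial_y f}$, $\norm{f}$, and the cross term $\norm{\comii y^{-\sigma/2}\partial_y\Lambda^{1/6}f}$ that $C_\eps$ absorbs. Boundary traces generated when $\partial_y$ lands on $y=0$ are converted, via $\partial_t f|_{y=0}=\partial_y^2 f|_{y=0}+g|_{y=0}$, into integrals involving $\partial_y g|_{y=0}$, which after a weighted one-dimensional trace inequality are dominated by $\norm{\comii y^\sigma\Lambda^{-1/3}\partial_y g}$; the top-order contribution $\norm{\Lambda^{-2/3}\partial_y h}$ arises when $\partial_y$ inside $[X_0,\partial_y]$ falls on $h$ and is retained with the small parameter $\eps$ because it is one order higher than the left-hand side.

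For the $\partial_y^3\Lambda^{-2/3}f$ bound I plan to use the equation algebraically. Differentiating \eqref{eqf1} in $y$ yields
\[
\partial_y^3 f = \partial_y\partial_t f + (\partial_y u)\,\p f + u\,\p\partial_y f + (\partial_y v)\,\partial_y f + v\,\partial_y^2 f - \partial_y h,
\]
and applying $\Lambda^{-2/3}$ each term is controlled in $L^2([0,T]\times\RR_+^2)$ by quantities already present on the right of the proposition: $(\partial_y u)\p f$ gives $\comii y^{-\sigma}\Lambda^{1/3}f$, absorbed by the first estimate; $u\,\p\partial_y f$ loses $\Lambda^{1/3}$ to reduce to $\partial_y f$; the $v\,\partial_y^2 f$ term is handled via the Hardy-type bound $|v(t,x,y)|\le y\,\norm{\p u}_{L^\infty}$ (from $v|_{y=0}=0$ and $\partial_y v=-\p u$) combined with interpolation against the already controlled $\partial_y^2\Lambda^{-1/3}f$; and $\partial_y\partial_t f$ is reduced to $L^2$ norms of $\partial_y f$ by integration in time using $f(0)=f(T)=0$.

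The main technical obstacle will be the boundary analysis at $y=0$: several integrations by parts produce traces of $f$, $\partial_t f$, and $\partial_y^2 f$, and only the specific combination dictated by \eqref{bdc+} cancels; the emergence of the weight $\comii y^\sigma$ on the $\partial_y g$ term comes from lifting the boundary trace back into a weighted bulk norm against the polynomially decaying $\partial_y u$. A secondary delicate point is controlling the commutators of $\Lambda^{\pm1/3}$ with the non-smooth $(t,x)$-coefficients $u$, $v$, $\partial_y u$; these require either Bony's paradifferential calculus or a direct pseudodifferential estimate on the half-space, each commutator losing only a fraction of tangential derivative strictly less than the $1/3$-gain produced by the bracket, so that the final balance still closes.
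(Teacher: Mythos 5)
Your plan for the first estimate is essentially the paper's: the weighted term $\|(\partial_y u)^{1/2}\partial_x\Lambda^{-2/3}f\|^2$ is produced by pairing the equation with $\partial_y\partial_x\Lambda^{-2/3}f$ (your bracket $[X_0,\partial_y]$ in disguise), the $\partial_t$-pairing collapses to a boundary integral at $y=0$ which is handled through the compatibility condition \eqref{bdc+} and a weighted trace inequality giving $\|\comii y^\sigma\Lambda^{-1/3}\partial_y g\|$, and the commutators with $u,v$ are absorbed via Lemma \ref{lemma3.1}. One point you gloss over: the boundary term also contains the trace of $\partial_y^2\Lambda^{-1/2}f$ at $y=0$, and controlling it is precisely what forces the $\partial_y^3\Lambda^{-2/3}f$ estimate into the proof of the \emph{first} inequality (this is where $\eps\|\Lambda^{-2/3}\partial_y h\|$ actually enters), so the two estimates are not independent.

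For the second estimate there is a genuine gap. You propose to solve the $y$-differentiated equation algebraically for $\partial_y^3 f$ and bound each term of $\Lambda^{-2/3}\partial_y^3 f$ separately in $L^2$. This fails in at least two places. First, $\|\Lambda^{-2/3}\partial_t\partial_y f\|_{L^2([0,T]\times\RR_+^2)}$ is \emph{not} controlled by $\|\partial_y f\|_{L^2}$ via the endpoint conditions $f(0)=f(T)=0$; vanishing at $t=0,T$ gives no bound on the $L^2$ norm of a time derivative. This term can only be killed by keeping it paired against $\partial_y^3\Lambda^{-2/3}f$ and integrating by parts in $y$ and $t$, where it becomes $-{\rm Re}\,(\partial_t\partial_y^2\Lambda^{-2/3}f,\partial_y^2\Lambda^{-2/3}f)=0$ thanks to $\partial_y f|_{y=0}=0$ and \eqref{ib}. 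Second, $\|\Lambda^{-2/3}(u\partial_x\partial_y f)\|_{L^2}$ is of the size of $\|\Lambda^{1/3}\partial_y f\|_{L^2}$, which does not appear on the right-hand side of the second estimate and is not otherwise controlled; again the term must stay inside the pairing with $\partial_y^3\Lambda^{-2/3}f$, where an integration by parts in $y$ converts it into $(\partial_x u)$- and $(\partial_y u)\partial_x$-terms dominated by $\|\partial_y^2\Lambda^{-1/3}f\|$ and $\|(\partial_y u)^{1/2}\partial_x\Lambda^{-2/3}f\|$. In short, the second estimate requires an energy (weak) argument against $-\partial_y^3\Lambda^{-2/3}f$, not a termwise strong bound; as written, your derivation of it does not close.
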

We will prove this proposition in next section \ref{section3}.  This subellipitic estimate gives a gain of regularity of order $\frac 13$ with respect to $x$ variable, so it is sufficient to repeat  the same procedure for $3$ 
times to get $1$ order of regularity.

 \bigskip

\noindent
{\bf Continuation of the proof of the Claim $\bm{E_{m,\ell}}$ }. 

We now use the above subellipticity for the   function $f=\tilde f$, with $\tilde f$ defined in Proposition \ref{nonli}, i.e.,
 $$f =\phi^{1/2}\Lambda_\delta^{-2}\phi^{\ell-1}_mW_{m}^{\ell-1} f_{m}=\Lambda_\delta^{-2}\phi^{3(m-N_0-1)+\ell-\frac{1}{2}}W_{m}^{\ell-1} f_{m}.$$  
Similar to \reff{2.16},  we see $f$ is a classical solution to the following problem:
  \begin{eqnarray*} 
\left\{
\begin{array}{l}
\inner{\partial_t +u\p +v\partial_y-\partial^2_y}f=\phi^{1/2}\mathcal{Z}_{m,\ell-1, \delta}+\inner{\partial_t\phi^{1/2}}\Lambda_\delta^{-2}\phi^{\ell-1}_mW_{m}^{\ell-1} f_{m}, \\
	\partial_y f\big|_{y=0}=0,\\
	 f\big|_{t=0}=f\big|_{t=T}=0,
\end{array}
\right.  
\end{eqnarray*}
where $\mathcal{Z}_{m,\ell-1,\delta}$
is defined in \reff{2.18a}. 
 The initial value and boundary value in \eqref{2.16} is take in the sense of trace in Sobolev space.   The validity of {\bf Claim $\bm{E_{m,\ell-1}}$} due to the inductive assumption \reff{+ini} yields that 
 $\partial_y^3 f\in L^2\inner{[0,T]\times\mathbb R_+^2}.$ 
 Next  we calculate $(\partial_t f-\partial_y^2 f)\big |_{y=0}$.   Firstly we have,  seeing \reff{15042202}  in the appendix,  
 \begin{eqnarray*}
\left(\partial_t f_m-\partial_y^2 f_m\right)|_{y=0}= -2 \left[\partial_y\inner{\frac{\partial_y\omega}{\omega} }\right]f_m \Big|_{y=0}. 
\end{eqnarray*}
Then
 \begin{eqnarray*}
 \partial_t f \big|_{y=0}  &=&\Lambda^{-2}_\delta\inner{\partial_t \phi^{3(m-N_0-1)+\ell-{1\over 2}}}W^{\ell-1}_m f_m\big|_{y=0} +\Lambda^{-2}_\delta\phi^{3(m-N_0-1)+\ell-{1\over 2}}W^{\ell-1}_m \partial_tf_m\big|_{y=0}\\
&=&\Lambda^{-2}_\delta\inner{\partial_t \phi^{3(m-N_0-1)+\ell-{1\over 2}}}W^{\ell-1}_mf_m\big|_{y=0} +\Lambda^{-2}_\delta\phi^{3(m-N_0-1)+\ell-{1\over 2}}W^{\ell-1}_m  \partial_y^2 f_m\big|_{y=0}\\
&&-2\Lambda^{-2}_\delta\phi^{3(m-N_0-1)+\ell-{1\over 2}}W^{\ell-1}_m \left[\partial_y\inner{\frac{\partial_y\omega}{\omega}}\right]f_m\Big|_{y=0}\\
&=&\Lambda^{-2}_\delta\inner{\partial_t \phi^{3(m-N_0-1)+\ell-{1\over 2}}}\Lambda^{(\ell-1)/3} f_m\big|_{y=0} +\Lambda^{-2}_\delta\phi^{3(m-N_0-1)+\ell-{1\over 2}}W^{\ell-1}_m  \partial_y^2 f_m\big|_{y=0}\\
&&-2\Lambda^{-2}_\delta\phi^{3(m-N_0-1)+\ell-{1\over 2}}\Lambda^{(\ell-1)/3} \left[\partial_y\inner{\frac{\partial_y\omega}{\omega}}\right]f_m\Big|_{y=0}.
\end{eqnarray*}
This, along with the fact that  
\begin{eqnarray*}
	 && \Lambda^{-2}_\delta\phi^{3(m-N_0-1)+\ell-{1\over 2}}W^{\ell-1}_m   \partial_y^2 f_m\big|_{y=0}\\
	 &=&\partial_y^2 \Lambda^{-2}_\delta\phi^{3(m-N_0-1)+\ell-{1\over 2}}W^{\ell-1}_m   f_m\big|_{y=0}-\com{\partial_y^2,~W^{\ell-1}_m}\Lambda^{-2}_\delta\phi^{3(m-N_0-1)+\ell-{1\over 2}}   f_m\big|_{y=0}\\
	  &=&\partial_y^2 f\big|_{y=0}-\inner{\frac{2c^2}{(3m+\ell-1)\sigma}+3c^2}\Lambda^{-2}_\delta\phi^{3(m-N_0-1)+\ell-{1\over 2}}\Lambda^{(\ell-1)/3} f_m\big|_{y=0}
\end{eqnarray*}
due to  the fact that $\partial_y \Lambda^{-2}_\delta f_m|_{y=0}=0$ (seeing \reff{15042201} in  the appendix),  gives
\begin{eqnarray*}
&&\inner{\partial_t f -\partial_y^2f }\big|_{y=0}\\
&=&\Lambda^{-2}_\delta\inner{\partial_t \phi^{3(m-N_0-1)+\ell-{1\over 2}}}\Lambda^{(\ell-1)/3} f_m\big|_{y=0}\\
&&  -\inner{\frac{2c^2}{(3m+\ell-1)\sigma}+3c^2}\Lambda^{-2}_\delta\phi^{3(m-N_0-1)+\ell-{1\over 2}}\Lambda^{(\ell-1)/3} f_m\big|_{y=0}\\
&&-2\Lambda^{-2}_\delta\phi^{3(m-N_0-1)+\ell-{1\over 2}}\Lambda^{(\ell-1)/3} \left[\partial_y\inner{\frac{\partial_y\omega}{\omega}}\right]f_m\Big|_{y=0}\\
&\stackrel{\rm def}{=}& g \big|_{y=0}
\end{eqnarray*}
with 
\begin{eqnarray}
	g &=&\Lambda^{-2}_\delta\inner{\partial_t \phi^{3(m-N_0-1)+\ell-{1\over 2}}}\Lambda^{(\ell-1)/3} f_m\nonumber \\
	&&  -\inner{\frac{2c^2}{(3m+\ell-1)\sigma}+3c^2}\Lambda^{-2}_\delta\phi^{3(m-N_0-1)+\ell-{1\over 2}}\Lambda^{(\ell-1)/3} f_m\label{gam} \\
&&-2\Lambda^{-2}_\delta\phi^{3(m-N_0-1)+\ell-{1\over 2}}\Lambda^{(\ell-1)/3} \left[\partial_y\inner{\frac{\partial_y\omega}{\omega}}\right]f_m. \nonumber
\end{eqnarray}
Then using Proposition  \ref{subtang} for $h=\phi^{1/2}\mathcal{Z}_{m,\ell-1, \delta}+\inner{\partial_t\phi^{1/2}}\Lambda_\delta^{-2}\phi^{\ell-1}_mW_{m}^{\ell-1} f_{m}$ and the above $g$, we have
\begin{eqnarray*}
&& \norm{\comii y^{-\sigma/2}   \Lambda^{1/3}f}_{L^2(   [0, T]\times \RR_+^2)}+\norm{ \partial_y^2 \Lambda^{-1/3}f}_{L^2(   [0, T]\times \RR_+^2)}  \\
& \leq &  \eps   \norm{\Lambda^{-2/3}\partial_y h}_{L^2(\mathbb R^2)}   +
     C_\eps~   \Big(\norm{
       \Lambda^{-1/3} h }_{L^{2}(  [0, T]\times \RR_+^2)}+\norm{\partial_y f}_{L^{2}(  [0, T]\times \RR_+^2)}+\norm{ f}_{L^{2}(  [0, T]\times \mathbb R_+^2)}  \Big) \\
&& +
     C_\eps ~  \Big( \norm{\comii y^{-{\sigma\over2}}  \partial_y \Lambda^{1/6}  f}_{L^2(   [0, T]\times \mathbb R_+^2)}+\norm{\comii y^\sigma\Lambda^{-1/3}  \partial_y g}_{L^2(   [0, T]\times \mathbb R_+^2)}\Big)\, . 
\end{eqnarray*}
{\bf We claim,} for any $\tilde \eps>0,$  
\begin{eqnarray}\label{claim++1}
	&& \eps   \norm{\Lambda^{-2/3}\partial_y h}_{L^2(\mathbb R^2)}   +
     C_\eps~   \Big(\norm{
       \Lambda^{-1/3} h }_{L^{2}(  [0, T]\times \RR_+^2)}+\norm{\partial_y f}_{L^{2}(  [0, T]\times \RR_+^2)}+\norm{ f}_{L^{2}(  [0, T]\times \mathbb R_+^2)}  \Big)\nonumber \\
& &\qquad\qquad+
     C_\eps ~  \Big( \norm{\comii y^{-{\sigma\over2}}  \partial_y \Lambda^{1/6}  f}_{L^2(   [0, T]\times \mathbb R_+^2)}+\norm{\comii y^\sigma\Lambda^{-1/3}  \partial_y g}_{L^2(   [0, T]\times \mathbb R_+^2)}\Big)\nonumber \\
   &  \leq &  \eps\, C_5 \inner{ \norm{\comii y^{-\sigma/2}    \Lambda^{1/3}f}_{L^2(   [0, T]\times \RR_+^2)}+\norm{ \partial_y^2 \Lambda^{-1/3}f}_{L^2(   [0, T]\times \RR_+^2)} }\\
     && +\tilde \eps\, m^{-(1+\sigma)/2}\inner{\norm{F}_{L^2(   [0, T]\times \RR_+^2)}+\norm{ \partial_yF}_{L^2(   [0, T]\times \RR_+^2)}}\nonumber \\
     &&+C_{\eps, \tilde\eps} m^{(1+\sigma)/2}  A^{m-5+\frac{\ell-1}{6}} \inner{(m-5)!}^sm^{(\ell-1)(1+\sigma)},\nonumber
\end{eqnarray}
where $C_5$ is a constant depending only on $\sigma,$ $c$, and the constant $C_*$,  but independent of  $m$ and $\delta,$ and $C_{\eps, \tilde\eps}$ is a constant depending only on $\eps,\tilde\eps, \sigma,$ $c$, and the constant $C_*$,  but independent of  $m$ and $\delta.$ Recall $F=\Lambda_\delta^{-2} \phi^{\ell}_{m}W_{m}^\ell f_{m}. $  The proof of \reff{claim++1} is postponed to the end of this section.   
Now combining the above inequalities and letting $\eps$ be small enough, we infer for any $\tilde \eps>0,$
\begin{eqnarray}
	&& \norm{\comii y^{-\sigma/2}   \Lambda^{1/3}f}_{L^2(   [0, T]\times \RR_+^2)}+\norm{ \partial_y^2 \Lambda^{-1/3}f}_{L^2(   [0, T]\times \RR_+^2)}\nonumber\\
	&\leq & \tilde \eps\, m^{-(1+\sigma)/2}\inner{\norm{F}_{L^2(   [0, T]\times \RR_+^2)}+\norm{ \partial_yF}_{L^2(   [0, T]\times \RR_+^2)}} \label{for225}\\
     &&+C_{\tilde\eps} m^{(1+\sigma)/2}  A^{m-5+\frac{\ell-1}{6}} \inner{(m-5)!}^sm^{(\ell-1)(1+\sigma)}.\nonumber
\end{eqnarray}

Now we come back to estimate the terms on the right side of \reff{bigf}. To do so  we need  the following technic Lemma, whose proof     is presented at the end of  Section \ref{section4}.

\begin{lemma}\label{lemma261}
	Recall $F=\Lambda_\delta^{-2} \phi^{\ell}_{m}W_{m}^\ell f_{m}$ and $f=\phi^{1/2}\Lambda_\delta^{-2} \phi^{\ell-1}_{m}W_{m}^{\ell-1} f_{m}$. 
	There exists a constant $C_6$,  depending only on $\sigma,$ $c$, and the constant $C_*$,  but independent of  $m$ and $\delta,$  such that 
	\begin{eqnarray*}
&&\| \phi^{-1/2} F\|_{L^2( [0,T]\times \mathbb
  R_+^2)}+\norm{\partial_y^2\Lambda^{-2/3}F}_{L^2( [0,T]\times \mathbb
  R_+^2)} \\
 & \leq & ~ C_6\inner{ m^{\sigma/2} \norm{\comii y ^{-\frac{\sigma}{2}}
\Lambda^{1/3}f }_{L^2( [0,T]\times \mathbb
  R_+^2)} +  \norm{\partial_y^2\Lambda^{-1/3} f}_{L^2( [0,T]\times \mathbb
  R_+^2)}}\\
  &&~+C_6\inner{ \|  \phi^{\ell-1}_{m}W_{m}^{\ell-1} f_{m}\|_{L^2( [0,T]\times \mathbb
  R_+^2)}+\norm{\partial_y  \phi^{\ell-1}_{m}W_{m}^{\ell-1} f_{m}}_{L^2( [0,T]\times \mathbb
  R_+^2)}},
    \end{eqnarray*}
  and
  \begin{eqnarray}\label{+upb+++}
\begin{split}
& \norm{\partial_y^3\Lambda^{-1}F}_{L^2( [0,T]\times \mathbb
  R_+^2)}\\
  \leq & C_6 \norm{\partial_y^2\Lambda^{-1/3} f}_{L^2( [0,T]\times \mathbb
  R_+^2)} +C_6\norm{\partial_y^3\Lambda^{-2/3}f}_{L^2( [0,T]\times \mathbb
  R_+^2)}\\
  &~+C_6 \inner{ \|  \phi^{\ell-1}_{m}W_{m}^{\ell-1} f_{m}\|_{L^2( [0,T]\times \mathbb
  R_+^2)}+\norm{\partial_y  \phi^{\ell-1}_{m}W_{m}^{\ell-1} f_{m}}_{L^2( [0,T]\times \mathbb
  R_+^2)}}.
\end{split}
  \end{eqnarray}
\end{lemma}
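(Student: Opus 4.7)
The strategy is to derive an explicit algebraic identity relating $F$ to $f$, thereby reducing both inequalities to pointwise bounds on a function of $y$ alone. Writing $W_m^\ell = w_m^\ell(y)\,\Lambda^{\ell/3}$ with
\begin{equation*}
w_m^\ell(y) = e^{2cy}\Bigl(1+\frac{2cy}{(3m+\ell)\sigma}\Bigr)^{-(3m+\ell)\sigma/2}(1+cy)^{-1},
\end{equation*}
and recalling $\phi_m^\ell = \phi\cdot\phi_m^{\ell-1}$ with $\phi := t(T-t)$, the operators $\Lambda_\delta^{-2}$, $\Lambda^{1/3}$ (acting in $x$), $\phi$ (acting in $t$), and $w_m^\ell$ (acting in $y$) mutually commute. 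A short computation then yields
\begin{equation*}
F = \phi^{1/2}\,h_m(y)\,\Lambda^{1/3} f,\qquad h_m(y) := \frac{w_m^\ell(y)}{w_m^{\ell-1}(y)} = \frac{(1+2cy/\alpha_\ell)^{-\alpha_\ell/2}}{(1+2cy/\alpha_{\ell-1})^{-\alpha_{\ell-1}/2}},
\end{equation*}
where $\alpha_j := (3m+j)\sigma$. In particular $\phi^{-1/2} F = h_m\,\Lambda^{1/3} f$.

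The core of the proof consists of the following pointwise bounds, uniform in $m\geq N_0+1$ and $y\geq 0$:
\begin{equation*}
0\leq h_m(y) \leq C\,m^{\sigma/2}\,\comii y^{-\sigma/2},\qquad |h_m^{(j)}(y)| \leq C\quad(j=1,2,3).
\end{equation*}
For the first, I factor
\begin{equation*}
h_m(y) = (1+2cy/\alpha_\ell)^{-\sigma/2}\Bigl[\frac{1+2cy/\alpha_{\ell-1}}{1+2cy/\alpha_\ell}\Bigr]^{\alpha_{\ell-1}/2}.
\end{equation*}
The bracketed factor equals $\bigl(1 + \tfrac{2cy\sigma}{\alpha_{\ell-1}\alpha_\ell(1+2cy/\alpha_\ell)}\bigr)^{\alpha_{\ell-1}/2}$, which by $(1+u)\leq e^u$ is bounded by $\exp\bigl(cy\sigma/(\alpha_\ell+2cy)\bigr)\leq e^{\sigma/2}$ uniformly. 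The pre-factor $(1+2cy/\alpha_\ell)^{-\sigma/2} = \alpha_\ell^{\sigma/2}(\alpha_\ell+2cy)^{-\sigma/2}$ is handled by splitting at $y\sim m$: for $y\lesssim m$ it is $O(1)$ while $m^{\sigma/2}\comii y^{-\sigma/2}\gtrsim 1$, and for $y\gtrsim m$ it is $\lesssim m^{\sigma/2}y^{-\sigma/2}\sim m^{\sigma/2}\comii y^{-\sigma/2}$. The derivative bounds follow from direct differentiation: from
\begin{equation*}
(\log h_m)'(y) = -\frac{2c^2y\sigma}{\alpha_\ell\alpha_{\ell-1}(1+2cy/\alpha_\ell)(1+2cy/\alpha_{\ell-1})},
\end{equation*}
together with $h_m\leq 1$, one gets $|h_m'(y)|\leq C/m$; iterated differentiation yields analogous uniform bounds on $h_m''$ and $h_m'''$.

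Given these bounds, the two inequalities are assembled as follows. The bound on $\|\phi^{-1/2} F\|$ is immediate: $\|\phi^{-1/2} F\|_{L^2} = \|h_m\Lambda^{1/3} f\|_{L^2}\leq C m^{\sigma/2}\|\comii y^{-\sigma/2}\Lambda^{1/3} f\|_{L^2}$. For $\|\partial_y^2\Lambda^{-2/3} F\|$, I write $\partial_y^2\Lambda^{-2/3} F = \phi^{1/2}\partial_y^2[h_m\Lambda^{-1/3} f]$ and expand by Leibniz into three summands, bounded respectively by $\|\partial_y^2\Lambda^{-1/3} f\|$ (via $h_m\leq 1$), $\|\partial_y f\|$ and $\|f\|$ (via the derivative bounds and the contractivity of $\Lambda^{-1/3}$ on $L^2$). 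The last two are converted to RHS quantities using $f = \phi^{1/2}\Lambda_\delta^{-2} G$ with $G := \phi_m^{\ell-1}W_m^{\ell-1} f_m$: since $\|\Lambda_\delta^{-2}\|_{L^2\to L^2}\leq 1$ and $\phi^{1/2}$ is bounded, $\|f\|\leq C\|G\|$ and $\|\partial_y f\|\leq C\|\partial_y G\|$. Inequality \reff{+upb+++} is handled identically: expand $\partial_y^3\Lambda^{-1} F = \phi^{1/2}\partial_y^3[h_m\Lambda^{-2/3} f]$ into four Leibniz terms, bounded respectively by $\|\partial_y^3\Lambda^{-2/3} f\|$, $\|\partial_y^2\Lambda^{-1/3} f\|$ (via $\|\partial_y^2\Lambda^{-2/3} f\|\leq \|\partial_y^2\Lambda^{-1/3} f\|$), $\|\partial_y G\|$, and $\|G\|$.

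The genuinely technical step is the bound $h_m(y)\leq C m^{\sigma/2}\comii y^{-\sigma/2}$: it depends on the near-cancellation of the two factors of $h_m$ (the residual exponent being exactly $\alpha_\ell-\alpha_{\ell-1}=\sigma$, which is what produces the $m^{\sigma/2}\comii y^{-\sigma/2}$ rather than a worse growth) and on matching a polynomial decay in the large parameter $\alpha_\ell\sim m$ to the desired geometric-mean form. Everything else reduces to bookkeeping with Leibniz's rule and the commutativity of the operators in different variables.
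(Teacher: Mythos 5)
Your proof is correct and follows essentially the same route as the paper: the identity $F=\phi^{1/2}h_m\Lambda^{1/3}f$ with $h_m=w_m^\ell/w_m^{\ell-1}$ and the pointwise bound $h_m(y)\leq Cm^{\sigma/2}\comii y^{-\sigma/2}$ are exactly the weight comparison the paper establishes (its chain of inequalities culminating in $\inner{1+\frac{2cy}{(3m+\ell)\sigma}}^{-(3m+\ell)\sigma/2}\leq Cm^{\sigma/2}(1+y)^{-\sigma/2}\inner{1+\frac{2cy}{(3m+\ell-1)\sigma}}^{-(3m+\ell-1)\sigma/2}$), and your Leibniz expansion with uniform bounds on $h_m^{(j)}$ is equivalent to the paper's use of the commutator estimates \reff{comfm+}, \reff{+comfm++} together with the monotonicity \reff{+wmi}. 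The only difference is presentational: you package the weight ratio as a single function $h_m$ and differentiate it directly, which is a slightly cleaner bookkeeping of the same argument.
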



\noindent
{\bf End of the proof of the Claim $\bm{E_{m,\ell}}$ }. 

We combine  \reff{for225} and the first estimate in Lemma \ref{lemma261}, to conclude
\begin{eqnarray*}
	&&\| \phi^{-1/2} F\|_{L^2( [0,T]\times \mathbb
  R_+^2)}+\norm{\partial_y^2\Lambda^{-2/3}F}_{L^2( [0,T]\times \mathbb
  R_+^2)} \\
 & \leq & ~ \tilde\eps C_6  m^{-1/2} \inner{\norm{F}_{L^2( [0,T]\times \mathbb
  R_+^2)}+\norm{\partial_y F}_{L^2( [0,T]\times \mathbb
  R_+^2)}}  \\
  &&~+C_6 C_{\tilde\eps}  m^{\frac{1}{2}+\sigma}    A^{m-5+\frac{\ell-1}{6}} \inner{(m-5)!}^sm^{(\ell-1)(1+\sigma)}\\
  &&+ C_6\inner{ \|  \phi^{\ell-1}_{m}W_{m}^{\ell-1} f_{m}\|_{L^2( [0,T]\times \mathbb
  R_+^2)}+\norm{\partial_y  \phi^{\ell-1}_{m}W_{m}^{\ell-1} f_{m}}_{L^2( [0,T]\times \mathbb
  R_+^2)}}\\
 & \leq & ~ \tilde\eps C_6  m^{-1/2} \inner{\norm{F}_{L^2( [0,T]\times \mathbb
  R_+^2)}+\norm{\partial_y F}_{L^2( [0,T]\times \mathbb
  R_+^2)}}  \\
  &&~+\inner{C_6 C_{\tilde\eps} +C_6}  m^{\frac{1}{2}+\sigma}   A^{m-5+\frac{\ell-1}{6}}  \inner{(m-5)!}^{3(1+\sigma)} (m-4)^{(\ell-1)(1+\sigma)}\end{eqnarray*}
the last inequality using \reff{+ini}.  This along with \reff{bigf} yields 
\begin{eqnarray*}
	&&
\|F\|_{L^\infty\inner{[0,T];~L^2(\RR^2_+)}}+\sum_{j=1}^2\norm{\partial_y^j\Lambda^{-\frac{2(j-1)}{3}} F }_{L^2\inner{[0,T]\times \RR^2_+}}\nonumber  \\
	&&\leq \tilde\eps  C_4 C_6 \inner{   \norm{    F} _{L^2\inner{[0,T]\times\mathbb R_+^2}}+ \|\partial_y F \|_{L^2\inner{[0,T]\times \RR^2_+}}}\\
	&&\quad +C_4\inner{C_6 C_{\tilde\eps} +C_6}  m^{1+\sigma}   A^{m-5+\frac{\ell-1}{6}}  \inner{(m-5)!}^{3(1+\sigma)} (m-4)^{(\ell-1)(1+\sigma)}+  2 A^{m-6}  \inner{(m-5)!}^{3(1+\sigma)}. 
\end{eqnarray*}
Consequently, letting $\tilde \eps>0$ be small sufficiently,  
\begin{eqnarray*}
	&&
\|F\|_{L^\infty\inner{[0,T];~L^2(\RR^2_+)}}+\sum_{j=1}^2\norm{\partial_y^j\Lambda^{-\frac{2(j-1)}{3}} F }_{L^2\inner{[0,T]\times \RR^2_+}}\nonumber  \\
	&\leq & C_7 m^{1+\sigma} A^{m-5+\frac{\ell-1}{6}}  \inner{(m-5)!}^{3(1+\sigma)} (m-4)^{(\ell-1)(1+\sigma)}+  C_7  A^{m-6}  \inner{(m-5)!}^{3(1+\sigma)}\\
	&\leq & C_8 (m-4)^{1+\sigma} A^{m-5+\frac{\ell-1}{6}}  \inner{(m-5)!}^{3(1+\sigma)} (m-4)^{(\ell-1)(1+\sigma)}+  C_7  A^{m-6}  \inner{(m-5)!}^{3(1+\sigma)},
\end{eqnarray*}
where $C_7, C_8$ are two constants depending only on  $\sigma,$ $c$, and the constants $ C_0, C_*$ in Theorem \ref{mainthm},  but is  independent of  $m$ and $\delta.$   Now we choose $A$ such that 
\begin{eqnarray*}
	A\geq ( 2C_8 +2C_7+1)^6.
\end{eqnarray*}
It then follows that, observing $\ell\geq 1,$ 
\begin{eqnarray*}
	&&
\|F\|_{L^\infty\inner{[0,T];~L^2(\RR^2_+)}}+\sum_{j=1}^2\norm{\partial_y^j\Lambda^{-\frac{2(j-1)}{3}} F }_{L^2\inner{[0,T]\times \RR^2_+}}  \leq   A^{m-5+\frac{\ell }{6}}  \inner{(m-5)!}^{s} (m-4) ^{\ell(1+\sigma)}. 
\end{eqnarray*}
Observe the above constant $A$ is independent of $\delta$, and thus  letting $\delta\rightarrow 0,$ we see  \reff{+ini} holds for $i=\ell$.
 It remains to prove that $\partial_y^3\Lambda^{-1} \phi^{\ell}_{m}W_{m}^\ell f_{m}$.   The above estimate together with \reff{for225} gives
\begin{eqnarray*}
	\norm{\comii y^{-\sigma/2}   \Lambda^{1/3}f}_{L^2(   [0, T]\times \RR_+^2)}+\norm{ \partial_y^2 \Lambda^{-1/3}f}_{L^2(   [0, T]\times \RR_+^2)}<C_{m,1}
\end{eqnarray*} 
with $C_{m,1}
$ a constant depending on $m$ but independent of $\delta$, and thus, using the last estimate in Proposition  \ref{subtang} and \reff{claim++1},
\begin{eqnarray*}
	\norm{ \partial_y^3 \Lambda^{-2/3}f}_{L^2(   [0, T]\times \RR_+^2)} \leq C_{m,2},
\end{eqnarray*}
with $C_{m,2}
$ a constant depending on $m$ but independent of $\delta$.  As a result,  combining \reff{+upb+++}, we conclude
\begin{eqnarray*}
	\norm{\partial_y^3\Lambda^{-1}F}_{L^2( [0,T]\times \mathbb
  R_+^2)}<C_{m,3}
\end{eqnarray*}
with $C_{m,3}
$ a constant depending on $m$ but independent of $\delta$.   Thus letting $\delta\rightarrow 0, $ we see  $\partial^3_y \Lambda^{-1} \phi^\ell_m W^\ell_m  f_m\in 	L^{2}([0,T]\times \RR^2_+)$.   Thus {\bf Claim $\bm{E_{m,\ell}}$} holds.  This completes the proof of {\bf Claim $\bm{I_{m+1}}$}, and thus the proof of  Theorem \ref{mainthm}.

\bigskip

We end up this section by the following 

\begin{proof}[{\bf Proof of the estimate \reff{claim++1}}]  In the proof we use $C$ to denote different constants depending only on  $\sigma,$ $c$, and the constants $ C_0, C_*$ in Theorem \ref{mainthm},  but is  independent of  $m$ and $\delta.$ 

\medskip
{\it  (a)}  We first estimate $\norm{\Lambda^{-1/3}h}_{L^2\inner{[0,T]\times \mathbb R_+^2}},$ recalling $$h=\phi^{1/2}\mathcal{Z}_{m,\ell-1, \delta}+\inner{\partial_t\phi^{1/2}}\Lambda_\delta^{-2}\phi^{\ell-1}_mW_{m}^{\ell-1} f_{m}.$$
Using interpolation inequality gives, observing $\abs{\partial_t\phi^{1/2}}\leq \phi^{-1/2},$
\begin{eqnarray*}
	&&\norm{\Lambda^{-1/3} \inner{\partial_t\phi^{1/2}}\Lambda_\delta^{-2}\phi^{\ell-1}_mW_{m}^{\ell-1} f_{m}}_{L^2 (\mathbb R_x)} \\ 
	&\leq& m^{-1/2} \phi^{1/2}  \norm{  \inner{\partial_t\phi^{1/2}}\Lambda_\delta^{-2}\phi^{\ell-1}_mW_{m}^{\ell-1} f_{m}}_{L^2 (\mathbb R_x)} \\
	&& +m^{(\ell+1)/2}\phi^{-(\ell+1)/2}  \norm{\Lambda^{-1-\frac{\ell-1}{3}} \inner{\partial_t\phi^{1/2}}\Lambda_\delta^{-2}\phi^{\ell-1}_mW_{m}^{\ell-1} f_{m}}_{L^2 (\mathbb R_x)}\\ 
	&\leq&    m^{-1/2}  \norm{   \Lambda_\delta^{-2}\phi^{\ell-1}_mW_{m}^{\ell-1} f_{m}}_{L^2 (\mathbb R_x)} + m^{(\ell+1)/2} \norm{\Lambda^{-1-\frac{\ell-1}{3}} \phi^{-(\ell+2)/2}\Lambda_\delta^{-2}\phi^{\ell-1}_mW_{m}^{\ell-1} f_{m}}_{L^2 (\mathbb R_x)}\\ 
	&\leq&     m^{-1/2}  \norm{   \Lambda_\delta^{-2}\phi^{\ell-1}_mW_{m}^{\ell-1} f_{m}}_{L^2 (\mathbb R_x)} + m^{(\ell+1)/2} \norm{\Lambda^{-1}  \Lambda_\delta^{-2}\phi^{0}_{m-1}W_{m-1}^{0} f_{m}}_{L^2 (\mathbb R_x)},
\end{eqnarray*} 
the last inequality following from \reff{wmi+++++} which shows $W^0_i, i\geq 1, $ is a decreasing sequence of functions as $i$ varies in $\mathbb N$,  and the fact that 
\begin{eqnarray*}
	\phi^{-(\ell+2)/2}\phi^{\ell-1}_m\leq \phi^{0}_{m-1}. 
\end{eqnarray*}
Moreover,  using  \reff{++equ1+a} and the inductive assumptions \reff{+ini} and \reff{2.6}, we compute,   observing $\ell/2+1 \leq 3(1+\sigma),$ 
\begin{eqnarray*}
  &&m^{-1/2} \norm{   \Lambda_\delta^{-2}\phi^{\ell-1}_mW_{m}^{\ell-1} f_{m}}_{L^2 (\mathbb R_x)} + m^{(\ell+1)/2} \norm{\Lambda^{-1}  \Lambda_\delta^{-2}\phi^{0}_{m-1}W_{m-1}^{0} f_{m}}_{L^2\inner{[0,T]\times \mathbb R_+^2}} \\ 
	 	&\leq&   m^{-1/2}   \norm{   \phi^{\ell-1}_mW_{m}^{\ell-1} f_{m}}_{L^2 (\mathbb R_x)} +   \tilde {C_1} m^{(\ell+1)/2} \norm{   \phi^{0}_{m-1}W_{m-1}^{0} f_{m-1}}_{L^2\inner{[0,T]\times \mathbb R_+^2}} \\ 
	 	&\leq&   m^{-1/2}  A^{m-5 +\frac {\ell-1} {6}}\inner{(m-5)! }^{3(1+\sigma)}(m-4)^{(\ell-1)(1+\sigma)} +   \tilde {C_1} m^{(\ell+1)/2} A^{m-6}\inner{(m-6)! }^{3(1+\sigma)}\\ 
	 	&\leq&  C m^{-1/2}  A^{m-5 +\frac {\ell-1} {6}}\inner{(m-5)! }^{3(1+\sigma)}(m-4)^{(\ell-1)(1+\sigma)} .
\end{eqnarray*}
Thus we have, combining the above inequalities, 
\begin{eqnarray}
	&&\norm{\Lambda^{-1/3} \inner{\partial_t\phi^{1/2}}\Lambda_\delta^{-2}\phi^{\ell-1}_mW_{m}^{\ell-1} f_{m}}_{L^2\inner{[0,T]\times \mathbb R_+^2}}\nonumber\\
	& \leq& C m^{-1/2} A^{m-5 +\frac {\ell-1} {6}}\inner{(m-5)! }^{3(1+\sigma)}(m-4)^{(\ell-1)(1+\sigma)}\label{jklambda}.
\end{eqnarray}
Similarly, we can show that
\begin{eqnarray}
	&&\norm{\partial_y \Lambda^{-1/3}  \inner{\partial_t\phi^{1/2}}\Lambda_\delta^{-2}\phi^{\ell-1}_mW_{m}^{\ell-1} f_{m}}_{L^2\inner{[0,T]\times \mathbb R_+^2}}\nonumber\\
	& \leq& C m^{-1/2} A^{m-5 +\frac {\ell-1} {6}}\inner{(m-5)! }^{3(1+\sigma)}(m-4)^{(\ell-1)(1+\sigma)}\label{jklambda++}.
\end{eqnarray}
Using \reff{+2.20++} in Proposition \ref{nonli}, we have
\begin{eqnarray*}
	&& \norm{ \Lambda^{-1/3}  \phi^{1/2}\mathcal Z_{m,\ell-1, \delta}}_{L^2\inner{[0,T]\times\mathbb R_+^2}} \\
 &\leq & m C _3  \norm{ \Lambda^{-1/3} \phi^{-1/2}\Lambda_\delta^{-2} \phi^{\ell-1}_{m} W^{\ell-1}_{m} f_{m} } _{L^2\inner{[0,T]\times\mathbb R_+^2}}+ C_3 \norm{\partial_y \Lambda^{-1/3} \Lambda_\delta^{-2}  \phi^{\ell-1}_{m} W^{\ell-1}_{m} f_{m} }_{L^2\inner{[0,T]\times\mathbb R_+^2}} \\
 &&+ C_3  A^{m-6 }  \inner{(m-5)!}^{s},
\end{eqnarray*}
and moreover repeating the arguments as in \reff{jklambda} and  \reff{jklambda++},    with $\partial_t\phi^{1/2}$ there replaced by $\phi^{-1/2},$ 
\begin{eqnarray*}
	&& m C _3  \norm{ \Lambda^{-1/3} \phi^{-1/2}\Lambda_\delta^{-2} \phi^{\ell-1}_{m} W^{\ell-1}_{m} f_{m} } _{L^2\inner{[0,T]\times\mathbb R_+^2}}+ C_3 \norm{\partial_y \Lambda^{-1/3}\Lambda_\delta^{-2}  \phi^{\ell-1}_{m} W^{\ell-1}_{m} f_{m} }_{L^2\inner{[0,T]\times\mathbb R_+^2}} \\
	&\leq & C  m^{ 1/2} A^{m-5 +\frac {\ell-1} {6}}\inner{(m-5)! }^{3(1+\sigma)}(m-4)^{(\ell-1)(1+\sigma)},
\end{eqnarray*}
and thus
\begin{eqnarray*}
	 \norm{ \Lambda^{-1/3}  \phi^{1/2}\mathcal Z_{m,\ell-1, \delta}}_{L^2\inner{[0,T]\times\mathbb R_+^2}} 
 \leq C  m^{ 1/2} A^{m-5 +\frac {\ell-1} {6}}\inner{(m-5)! }^{3(1+\sigma)}(m-4)^{(\ell-1)(1+\sigma)}.
\end{eqnarray*}
This along with \reff{jklambda}   yields
\begin{eqnarray*}
	\norm{\Lambda^{-1/3}h}_{L^2\inner{[0,T]\times \mathbb R_+^2}}\leq C  m^{ 1/2} A^{m-5 +\frac {\ell-1} {6}}\inner{(m-5)! }^{3(1+\sigma)}(m-4)^{(\ell-1)(1+\sigma)}. 
\end{eqnarray*}

\medskip
 (b)  In this step we treat $\norm{\Lambda^{-2/3}\partial_y h}_{L^2\inner{[0,T]\times \mathbb R_+^2}}$.  It follows from    \reff{jklambda++} that 
 \begin{eqnarray*}
 	&&\norm{\Lambda^{-2/3} \partial_y   \inner{\partial_t\phi^{1/2}}\Lambda_\delta^{-2}\phi^{\ell-1}_mW_{m}^{\ell-1} f_{m}}_{L^2\inner{[0,T]\times \mathbb R_+^2}} \\
	& \leq& C m^{-1/2} A^{m-5 +\frac {\ell-1} {6}}\inner{(m-5)! }^{3(1+\sigma)}(m-4)^{(\ell-1)(1+\sigma)}.
 \end{eqnarray*}
 On the other hand, 
by  \reff{2.21} we have,  recalling $\tilde f=f=\phi^{1/2}\Lambda_\delta^{-2} \phi^{\ell-1}_{m}W_{m}^{\ell-1} f_{m},$
 \begin{eqnarray*}
 	&&\norm{ \Lambda^{-\frac{2}{3}}\partial_y \phi^{1/2}\mathcal Z_{m,\ell-1, \delta}}_{L^2\inner{[0,T]\times\mathbb R_+^2}} \\
 &\leq &C_3 \| \comii y^{-\sigma}\Lambda^{1/3} f \|_{L^2([0,T]\times\RR^2_+)}+C_3\norm{ \partial_y^2 \Lambda^{-2/3}f }_{L^2\inner{[0,T]\times\mathbb R_+^2}} \\
 &&+m\,C_3\inner{ \norm{ \Lambda^{-2/3}\phi^{\ell-1}_{m} W^{\ell-1}_{m} f_{m}}_{L^2\inner{[0,T]\times\mathbb R_+^2}}+\norm{ \Lambda^{-2/3} \phi^{-1/2}\partial_y  \phi^{\ell-1}_{m} W^{\ell-1}_{m} f_{m} }_{L^2\inner{[0,T]\times\mathbb R_+^2}}} \\
 &&+ C_3 A^{m-6 }  \inner{(m-5)!}^{s}   
 \end{eqnarray*}
 and moreover similar to \reff{jklambda} and \reff{jklambda++},   we have
 \begin{eqnarray*}
 	&& m\,C_3\inner{ \norm{ \Lambda^{-2/3}\phi^{\ell-1}_{m} W^{\ell-1}_{m} f_{m}}_{L^2\inner{[0,T]\times\mathbb R_+^2}}+\norm{ \Lambda^{-2/3} \phi^{-1/2}\partial_y  \phi^{\ell-1}_{m} W^{\ell-1}_{m} f_{m} }_{L^2\inner{[0,T]\times\mathbb R_+^2}}}  \\
	& \leq& C m^{ 1/2} A^{m-5 +\frac {\ell-1} {6}}\inner{(m-5)! }^{3(1+\sigma)}(m-4)^{(\ell-1)(1+\sigma)},
 \end{eqnarray*}
 since $\abs{\partial_t\phi^{1/2}}\geq 1.$  Combining the above three inequalities gives
 \begin{eqnarray*}
	\norm{\Lambda^{-2/3}\partial_y h}_{L^2\inner{[0,T]\times \mathbb R_+^2}}&\leq& C \inner{ \| \comii y^{-\sigma}\Lambda^{1/3} f \|_{L^2([0,T]\times\RR^2_+)}+ \norm{ \partial_y^2 \Lambda^{-2/3}f }_{L^2\inner{[0,T]\times\mathbb R_+^2}} }\\
	&&+C m^{ 1/2} A^{m-5 +\frac {\ell-1} {6}}\inner{(m-5)! }^{3(1+\sigma)}(m-4)^{(\ell-1)(1+\sigma)}. 
\end{eqnarray*}
 
 \medskip
 (c)  It follows from the inductive assumption \reff{+ini} that, observing $\phi^{1/2}\leq 1,$
 \begin{eqnarray*}
 	\sum_{j=0}^1\norm{\partial_y^j f}_{L^2\inner{[0,T]\times\mathbb R_+^2}}&\leq&   \norm{ \phi^{\ell-1}_{m} W^{\ell-1}_{m} f_{m}}_{L^2\inner{[0,T]\times\mathbb R_+^2}}+\norm{   \partial_y  \phi^{\ell-1}_{m} W^{\ell-1}_{m} f_{m} }_{L^2\inner{[0,T]\times\mathbb R_+^2}}\\
 	&\leq&   A^{m-5 +\frac {\ell-1} {6}}\inner{(m-5)! }^{3(1+\sigma)}(m-4)^{(\ell-1)(1+\sigma)}. 
 \end{eqnarray*}
 Now we estimate  $\norm{\comii y^\sigma\Lambda^{-1/3}  \partial_y g}_{L^2(   [0, T]\times \mathbb R_+^2)}$, with $g$ is defined in \reff{gam}.  It is quite similar as in step (a).  For instance,
 \begin{eqnarray*}
 	&&\norm{\comii y^\sigma\Lambda^{-1/3}  \partial_y \Lambda^{-2}_\delta\inner{\partial_t \phi^{3(m-N_0-1)+\ell-{1\over 2}}}\Lambda^{(\ell-1)/3} f_m}_{L^2(   [0, T]\times \mathbb R_+^2)} \\
 	&\leq & \norm{ \Lambda^{-1/3}  \phi^{-1/2} \Lambda_\delta^{-2}\phi^{\ell-1}_m W_{m}^{\ell-1} \partial_y f_{m}}_{L^2\inner{[0,T]\times \mathbb R_+^2}} \\
 	&\leq & C \norm{ \Lambda^{-1/3}  \phi^{-1/2} \Lambda_\delta^{-2}\phi^{\ell-1}_m W_{m}^{\ell-1}  f_{m}}_{L^2\inner{[0,T]\times \mathbb R_+^2}} +C \norm{ \partial_y \Lambda^{-1/3}  \phi^{-1/2} \Lambda_\delta^{-2}\phi^{\ell-1}_m W_{m}^{\ell-1}  f_{m}}_{L^2\inner{[0,T]\times \mathbb R_+^2}}.
 \end{eqnarray*}
 Then  similar to \reff{jklambda} and \reff{jklambda++}, we conclude 
 \begin{eqnarray*}
 	&& \norm{\comii y^\sigma\Lambda^{-1/3}  \partial_y \Lambda^{-2}_\delta\inner{\partial_t \phi^{3(m-N_0-1)+\ell-{1\over 2}}}\Lambda^{(\ell-1)/3} f_m}_{L^2(   [0, T]\times \mathbb R_+^2)} \\
 	&\leq & C m^{-1/2} A^{m-5 +\frac {\ell-1} {6}}\inner{(m-5)! }^{3(1+\sigma)}(m-4)^{(\ell-1)(1+\sigma)}.
 \end{eqnarray*}
 The other terms in \reff{gam}  can be estimated similarly,  and a classical commutator estimate (see Lemma \ref{lemma3.1} in the following section)  will be used for treatment of the third term in \reff{gam}.  Thus we conclude   
 \begin{eqnarray*}
 	\norm{\comii y^\sigma\Lambda^{-1/3}  \partial_y g}_{L^2(   [0, T]\times \mathbb R_+^2)}\leq  C   A^{m-5 +\frac {\ell-1} {6}}\inner{(m-5)! }^{3(1+\sigma)}(m-4)^{(\ell-1)(1+\sigma)}. 
 \end{eqnarray*}

\medskip 
 (d) It remains to estimate $\norm{\comii y^{-\sigma/2} \partial_y  \Lambda^{1/6}f}_{L^2(   [0, T]\times \RR_+^2)}$, and we have  
\begin{eqnarray*}
&&\norm{\comii y^{-\sigma/2}  \partial_y \Lambda^{1/6}f}_{L^2(   [0, T]\times \RR_+^2)}^2 =\norm{\comii y^{-\sigma/2}\partial_y\Lambda^{1/6} \Lambda^{-2}_{\delta} \phi^{3(m-N_0-1)+\ell-{1\over 2}} W^{\ell-1}_m
f_{m} }_{L^2\inner{[0,T]\times\mathbb R_+^2}} ^2\\
&=& \inner{\comii y^{-\sigma}\partial_y  \Lambda^{1/3}\Lambda^{-2}_{\delta} \phi^{3(m-N_0-1)+\ell} W^{\ell-1}_m
f_{m} ,~\partial_y  \Lambda^{-2}_{\delta}\phi^{3(m-N_0-1)+\ell-1} W^{\ell-1}_m
f_{m} }_{L^2\inner{[0,T]\times\mathbb R_+^2}} \\
&\leq & \norm{\comii y^{-\sigma}\partial_y \Lambda^{1/3} \Lambda^{-2}_{\delta}  \phi^{\ell}_m W^{\ell-1}_m
f_{m} }_{L^2\inner{[0,T]\times\mathbb R_+^2}} \norm{ \partial_y  \Lambda^{-2}_{\delta}\phi^{ \ell-1}_m W^{\ell-1}_m
f_{m} }_{L^2\inner{[0,T]\times\mathbb R_+^2}} \\
&\leq & C \inner{\norm{ \partial_y \Lambda^{-2}_{\delta}  \phi^{\ell}_mW^{\ell}_m
f_{m} }_{L^2\inner{[0,T]\times\mathbb R_+^2}}+\norm{   \Lambda^{-2}_{\delta}  \phi^{\ell}_m W^{\ell}_m
f_{m} }_{L^2\inner{[0,T]\times\mathbb R_+^2}} }\norm{ \partial_y \phi^{ \ell -1}_m W^{\ell-1}_m
f_{m} }_{L^2\inner{[0,T]\times\mathbb R_+^2}},
\end{eqnarray*}
the last inequality following from the third estimate in  Lemma \ref{le+2.2+}.   This,  along with the inductive assumption  \eqref{+ini} implies,  for any $\tilde\eps>0, $
\begin{eqnarray*} 
&&\norm{\comii y^{-\sigma/2}  \partial_y \Lambda^{1/3}f}_{L^2(   [0, T]\times \RR_+^2)}  \\
&\leq & ~\tilde \eps m^{-(1+\sigma)/2}  \inner{\norm{F}_{L^2\inner{[0,T]\times\mathbb R_+^2}}+\norm{ \partial_y F }_{L^2\inner{[0,T]\times\mathbb R_+^2}} } + C_{\tilde \eps} m^{(1+\sigma)/2}  A^{m-5+\frac{\ell-1}{6}} \inner{(m-5)!}^sm^{(\ell-1)(1+\sigma)},
\end{eqnarray*}
recalling $F=\Lambda^{-2}_{\delta}  \phi^{\ell}_mW^{\ell}_m
f_{m} $. 	

Now combining the estimates in the above steps (a)-(d), we obtain the desired \reff{claim++1}.  
\end{proof}

\section{Subelliptic estimate}\label{section3}
In this section we prove the Proposition \ref{subtang}. We need the following 
commutators estimates.  Throughout the paper we use $\com{Q_1,~Q_2}$ to denote the commutator between two operators $Q_1$ and $Q_2$,  which is defined by 
\begin{eqnarray*}
	\com{Q_1,~Q_2}= Q_1Q_2 -Q_2Q_1=-\com{Q_2,~Q_1}. 
\end{eqnarray*}
We have
\begin{eqnarray}\label{15042210}
	 \com{Q_1,~Q_2Q_3}=Q_2\com{Q_1,~Q_3}+\com{Q_1,~Q_2}Q_3.
\end{eqnarray}
\begin{lemma}\label{lemma3.1}
Denote by $\com{\alpha}$ the largest integer less than or equal to $\alpha\geq 0.$   For any $\tau\in\RR$ and $a\in C^{[|\tau|]+1}_b(\RR^2_+)$, the space of functions such that all their derivatives up to the order of  $[|\tau|]+1$ are continuous and bounded,  there exists $C>0$ such that 
for suitable function $f$ and any $0<\delta<1$,
$$
\|[a, \Lambda^\tau \Lambda^{-2}_\delta] f\|_{L^2(\RR^2_+)}\le C 
\|\Lambda^{\tau-1} \Lambda^{-2}_\delta f\|_{L^2(\RR^2_+)},
$$
and
$$
\|[a\partial_x, \Lambda^\tau \Lambda^{-2}_\delta] f\|_{L^2(\RR^2_+)}\le C 
\|\Lambda^{\tau} \Lambda^{-2}_\delta f\|_{L^2(\RR^2_+)}.
$$
The constant $C$ depends on only on $\tau$ and $\|a\|_{C^{[|\tau|]+1}_b(\RR^2_+)}$.
\end{lemma}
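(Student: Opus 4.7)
The plan is to view $\Lambda^\tau\Lambda_\delta^{-2}$ as a tangential Fourier multiplier in $x$ only (with $y$ frozen as a parameter) and apply the classical Coifman--Meyer style pseudodifferential commutator calculus, taking care that every estimate is uniform in $\delta\in(0,1)$. The first step is to establish the key uniform symbol bound: writing $p_\delta(\xi)=\comii{\xi}^\tau\comii{\delta\xi}^{-2}$, I would show by induction that for every integer $k\ge 0$,
\begin{equation*}
\abs{\partial_\xi^k p_\delta(\xi)}\le C_k\comii{\xi}^{\tau-k}\comii{\delta\xi}^{-2},\qquad \delta\in(0,1),\ \xi\in\RR.
\end{equation*}
The case $k=0$ is immediate, and the inductive step rests on the elementary inequality $\delta\comii{\xi}\le\comii{\delta\xi}$ valid for $\delta\in(0,1)$ (from $\delta^2(1+\xi^2)\le 1+\delta^2\xi^2$), which converts any factor $\delta$ produced by differentiating the outer $\comii{\delta\xi}^{-2}$ into the desired $\comii{\xi}^{-1}$ decay. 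Thus $p_\delta$ behaves as a symbol of order $\tau$ decorated with a uniform decay weight $\comii{\delta\xi}^{-2}$ that survives every $\xi$-differentiation.

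Next, for each fixed $y\ge 0$, I would expand the one-dimensional commutator via the oscillatory integral representation
\begin{equation*}
\com{p_\delta(D_x),a}f(x)=\int\int e^{i(x-z)\xi}p_\delta(\xi)\inner{a(z,y)-a(x,y)}f(z,y)\di z\di\xi,
\end{equation*}
performing a Taylor expansion $a(z,y)-a(x,y)=\sum_{k=1}^{N-1}\frac{(z-x)^k}{k!}\partial_x^k a(x,y)+r_N(x,z,y)$ and integrating by parts in $\xi$ to obtain
\begin{equation*}
\com{a,p_\delta(D_x)}=-\sum_{k=1}^{N-1}\frac{(-i)^k}{k!}\bigl(\partial_x^k a\bigr)\bigl(\partial_\xi^k p_\delta\bigr)(D_x)-R_N.
\end{equation*}
I would then choose $N=[|\tau|]+1$, which matches exactly the regularity assumed on $a$. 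Each term in the sum is the composition of bounded multiplication by $\partial_x^k a$ with the Fourier multiplier $(\partial_\xi^k p_\delta)(D_x)$; by the symbol estimate above, this multiplier satisfies $\norm{(\partial_\xi^k p_\delta)(D_x)f}_{L^2_x}\le C_k\norm{\Lambda^{\tau-k}\Lambda_\delta^{-2}f}_{L^2_x}$ uniformly in $\delta$. The remainder $R_N$ is of order at most $\tau-N\le-1$ and is dealt with by the same type of symbol estimate applied directly to its oscillatory kernel. Summing over $k$, the leading ($k=1$) contribution produces the claimed bound $\norm{\Lambda^{\tau-1}\Lambda_\delta^{-2}f}_{L^2_x}$, and an application of Fubini in $y$ yields the first estimate of the lemma.

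For the second inequality, I would exploit the fact that $\partial_x$ commutes with every tangential Fourier multiplier, so
\begin{equation*}
\com{a\partial_x,\Lambda^\tau\Lambda_\delta^{-2}}=\com{a,\Lambda^\tau\Lambda_\delta^{-2}}\partial_x.
\end{equation*}
Applying the first estimate to $\partial_x f$ and using the obvious symbol inequality $\abs{\xi}\comii{\xi}^{\tau-1}\le\comii{\xi}^\tau$ delivers the bound by $\norm{\Lambda^\tau\Lambda_\delta^{-2}f}_{L^2(\RR^2_+)}$.

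The main obstacle I foresee is precisely the uniformity in $\delta$: the naive $S^m_{1,0}$ calculus would regard $\Lambda_\delta^{-2}$ as a genuine operator of order $-2$ and produce constants that blow up like $\delta^{-2}$ as $\delta\to 0$, which would be useless for the mollifier arguments in the main induction where one ultimately lets $\delta\to0$. The crucial trick is the inequality $\delta\comii{\xi}\le\comii{\delta\xi}$, which lets one trade every $\delta$ generated by differentiation for a $\comii{\xi}^{-1}$ gain, thereby propagating the decay factor $\comii{\delta\xi}^{-2}$ through the entire commutator expansion with constants depending only on $\tau$ and $\norm{a}_{C_b^{[|\tau|]+1}(\RR^2_+)}$.
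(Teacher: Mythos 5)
Your proposal is correct and follows essentially the route the paper itself indicates: the authors give no written proof, remarking only that since $\Lambda^\tau\Lambda_\delta^{-2}$ is a tangential Fourier multiplier the lemma follows by direct calculus or pseudo-differential computation (citing H\"ormander and Lerner), and your argument carries out exactly that computation, correctly isolating the one point specific to this lemma, namely the $\delta$-uniform symbol bounds obtained from $\delta\comii{\xi}\le\comii{\delta\xi}$. The only place you are terser than one would like is the remainder $R_N$ at the borderline regularity $N=[|\tau|]+1$ (for the paper's values $\tau=-1/3,-2/3$ the sum is empty and the whole commutator is the remainder), but the oscillatory-kernel estimate you invoke there is the standard Calder\'on-type commutator bound and does go through.
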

Since $\Lambda^\tau \Lambda^{-2}_\delta$ is only a Fourier multiplier of $x$ variable, so we can prove the above Lemma by direct calculus or pseudo-differential computation, cf. \cite{Hormander85, MR2599384}.  In this section, we use above Lemma with $a=u$ or $a=v$ and $\tau=-1/3, -2/3$. So that with hypothesis \eqref{1.5}, the constant in   Lemma \ref{lemma3.1} depends only on the constant  $C_0$ in Theorem \ref{mainthm}. 
 
\begin{proof} [ Proof of the Proposition \ref{subtang}] 
Taking the operator $ \Lambda^{-2/3}$ on both sides of \reff{eqf1},  we see the function $ \Lambda^{-2/3} f$ satisfies 
 the following equation  in $   ]0, T[~\times \mathbb R_+^2$:
 \begin{eqnarray}\label{dtl}
 \begin{split}
&\partial_t  \Lambda^{-2/3}  f+u\partial_x  \Lambda^{-2/3} f +v\partial_y  \Lambda^{-2/3} f -\partial_y^2
     \Lambda^{-2/3} f\\
    =&~ \Lambda^{-2/3} h+\com{u\partial_x+v\partial_y, ~  \Lambda^{-2/3} }f,
\end{split}
\end{eqnarray}
and that 
\begin{eqnarray}\label{bdc}
 \Lambda^{-2/3}f\big|_{t=0}= \Lambda^{-2/3}f\big|_{t=T}=0,\quad  \partial_y \Lambda^{-2/3}f\big|_{y=0}=0
\end{eqnarray}
due to \reff{ib} and \reff{bdc+}, since  $ \Lambda^{-2/3}$ is an operator acing only on $x$ variable. 
Recall  $\com{u\partial_x+v\partial_y, ~  \Lambda^{-2/3} }$ stands for the commutator between $u\partial_x+v\partial_y$ and $ \Lambda^{-2/3}.$  

{\it Step 1)}.      We will show in this step  that 
\begin{eqnarray}\label{stepb}
\begin{split}
 &\norm{\inner{\partial_y u}^{1/2} \partial_x   \Lambda^{-2/3}f}_{L^2(   [0, T]\times \mathbb R_+^2)}^2\\
  &\leq 2\abs{{\rm Re}~\inner{  \partial_t       \Lambda^{-2/3}f,~
   \partial_y \partial_x    \Lambda^{-2/3}  f }_{L^2(   [0, T]\times \mathbb R_+^2)}} + \norm{\partial_y^2   \Lambda^{-1/3} f}_{L^2(  [0, T]\times \mathbb R_+^2)} ^2 \\
&\quad+C\inner{ \norm{\Lambda^{-1/3} h}_{L^2(  [0, T]\times \mathbb R_+^2)}^2+  \norm{\partial_y f}_{L^2(  [0, T]\times \mathbb R_+^2)}^2 +\norm{ f}_{L^2(  [0, T]\times \mathbb R_+^2)}^2}.
\end{split}
\end{eqnarray}
To do so,  we take $L^2(  [0, T]\times \mathbb R_+^2)$ inner product  with the function $  \partial_y \partial_x  \Lambda^{-2/3}  f \in L^2(  [0, T]\times \mathbb R_+^2)$ on the both sides of
equation \reff{dtl},  and then consider  the real parts; this gives  
\begin{eqnarray}\label{leftside}
\begin{split}
&-{\rm Re}~\inner{u\partial_x    \Lambda^{-2/3} f ,~\partial_y \partial_x    \Lambda^{-2/3}  f }_{L^2(  [0, T]\times \mathbb R_+^2)} \\
=&
~{\rm Re}~\inner{  \partial_t      \Lambda^{-2/3}f,~
   \partial_y \partial_x    \Lambda^{-2/3}  f }_{L^2(   [0, T]\times \mathbb R_+^2)}
- {\rm Re}~\inner{  \partial_y^2   \Lambda^{-2/3}f,~
   \partial_y \partial_x    \Lambda^{-2/3}  f }_{L^2(   [0, T]\times \mathbb R_+^2)}\\
   &~+{\rm Re}~\inner{  v\partial_y     \Lambda^{-2/3} f,~
   \partial_y \partial_x    \Lambda^{-2/3}  f }_{L^2(   [0, T]\times \mathbb R_+^2)}
    -{\rm Re}~\inner{    \Lambda^{-2/3} h,~
    \partial_y \partial_x    \Lambda^{-2/3}  f }_{L^2(   [0, T]\times \mathbb R_+^2)}\\
    &~-{\rm Re}~\inner{\com{u\partial_x+v\partial_y, ~     \Lambda^{-2/3} }f,~
    \partial_y \partial_x    \Lambda^{-2/3}  f }_{L^2(   [0, T]\times \mathbb R_+^2)}.
\end{split}
\end{eqnarray}
We will treat the terms on both sides.  For the  term on left
hand side  we integrate by parts to obtain, here we use  $u\big|_{y=0}=0$ , 
\begin{eqnarray*}
&&-{\rm Re}~\inner{u\partial_x     \Lambda^{-2/3}f ,~ \partial_y \partial_x     \Lambda^{-2/3}f}_{L^2(   [0, T]\times \mathbb R_+^2)}\\
&=&- \frac{1}{2}\set{\inner{u\partial_x     \Lambda^{-2/3}f ,~  \partial_y \partial_x     \Lambda^{-2/3}f}_{L^2(   [0, T]\times \mathbb R_+^2)}+\inner{ \partial_y \partial_x      \Lambda^{-2/3} f,~ u\partial_x     \Lambda^{-2/3}f }_{L^2(    [0, T]\times \mathbb R_+^2)} }\\
&=& \frac{1}{2}\norm{\inner{\partial_y u}^{1/2} \partial_x   \Lambda^{-2/3}f}_{L^2(   [0, T]\times \mathbb R_+^2)}^2.
\end{eqnarray*}
Next we estimate the terms on the right hand side and have,  by Cauchy-Schwarz's inequality , 
\begin{eqnarray*}
\abs{- {\rm Re}~\inner{  \partial_y^2   \Lambda^{-2/3}f,~
   \partial_y \partial_x    \Lambda^{-2/3}  f }_{L^2(   [0, T]\times \mathbb R_+^2)}}\leq \frac{1}{2}\norm{\partial_y^2   \Lambda^{-1/3} f}_{L^2(  [0, T]\times \mathbb R_+^2)} ^2+\frac{1}{2}\norm{\partial_y f}_{L^2(  [0, T]\times \mathbb R_+^2)} ^2,\\
    \abs{{\rm Re}~\inner{    \Lambda^{-2/3} h,~
    \partial_y \partial_x    \Lambda^{-2/3}  f }_{L^2(   [0, T]\times \mathbb R_+^2)}} \leq  \norm{\Lambda^{-1/3} h}_{L^2(  [0, T]\times \mathbb R_+^2)}^2+ \norm{\partial_y f}_{L^2(  [0, T]\times \mathbb R_+^2)}^2
    \end{eqnarray*}
    and
    \begin{eqnarray*}
    	&& \abs{-{\rm Re}~\inner{  v\partial_y     \Lambda^{-2/3} f,~
   \partial_y \partial_x    \Lambda^{-2/3}  f }_{L^2(   [0, T]\times \mathbb R_+^2)}}\\
   &\leq&  \abs{ \inner{       \partial_y   f,~
\com{  \Lambda^{-2/3},~v}   \partial_y\partial_x    \Lambda^{-2/3}  f }_{L^2(   [0, T]\times \mathbb R_+^2)}}+\abs{ \inner{       v \partial_y f,~
  \Lambda^{-2/3}  \partial_y \partial_x    \Lambda^{-2/3}  f }_{L^2(   [0, T]\times \mathbb R_+^2)}}\\
   &\leq& C \norm{\partial_y f}_{L^2(  [0, T]\times \mathbb R_+^2)} ^2,
    \end{eqnarray*}
    the last inequality using Lemma \ref{lemma3.1}. Finally
    \begin{eqnarray*}
   && \abs{-{\rm Re}~\inner{\com{u\partial_x+v\partial_y, ~     \Lambda^{-2/3} }f,~
    \partial_y \partial_x    \Lambda^{-2/3}  f }_{L^2(   [0, T]\times \mathbb R_+^2)}}\\
   & \leq&  \norm{\Lambda^{1/3}\com{u\partial_x+v\partial_y, ~     \Lambda^{-2/3} }f}_{L^2(   [0, T]\times \mathbb R_+^2)}^2+\norm{
   \partial_y f }_{L^2(   [0, T]\times \mathbb R_+^2)}^2\\
   & \leq& 2 \inner{\norm{\com{u\partial_x+v\partial_y, ~    \Lambda^{1/3}   \Lambda^{-2/3} }f}_{L^2(   [0, T]\times \mathbb R_+^2)}^2+\norm{\com{u\partial_x+v\partial_y, ~     \Lambda^{1/3} } \Lambda^{-2/3} f}_{L^2(   [0, T]\times \mathbb R_+^2)}^2}\\
   &&+\norm{
   \partial_y  f }_{L^2(   [0, T]\times \mathbb R_+^2)}^2\\
    &\leq &C \inner{\norm{f}_{L^2(  [0, T]\times \mathbb R_+^2)}^2 +\norm{\partial_y f}_{L^2(  [0, T]\times \mathbb R_+^2)}^2  },
\end{eqnarray*}
These inequalities, together with \reff{leftside}, yields the desired \reff{stepb}.
 
\medskip
{\it Step 2).} In this step we will estimate the second term on the right hand side of \reff{stepb} and 
show that for any $\eps>0,$
\begin{eqnarray}\label{020602}
\begin{split}
&\norm{
 \partial_y^2  \Lambda^{-1/3} f }_{L^2(  [0, T]\times \mathbb R_+^2)}^2\\
    \leq&~ \eps ~\|\inner{\partial_{y}u}^{1/2}\partial_x   \Lambda^{-2/3}f\|_{L^{2}\inner{  [0, T]\times \mathbb R_+^2}}^{2}\\
    &+C_{\eps} \inner{ \norm{\partial_y f}_{L^{2}(  [0, T]\times \mathbb R_+^2)}^{2}+\norm{ f}_{L^{2}(  [0, T]\times \mathbb R_+^2)}^{2} +\norm{
       \Lambda^{-1/3} h }_{L^2([0,T]\times \mathbb
      R_+^2)}^2},
\end{split} 
\end{eqnarray}
with $C_\eps$ a constant depending on $\eps.$
We see that  the function $ \Lambda^{-1/3} f$  satisfies 
 the equation  in $   ]0, T[~\times \mathbb R_+^2,$
 \begin{eqnarray}\label{dtl+}
 \begin{split}
&\partial_t  \Lambda^{-1/3}  f+\inner{u\partial_x +v\partial_y } \Lambda^{-1/3} f -\partial_y^2
     \Lambda^{-1/3} f\\
    =&~ \Lambda^{-1/3} h+\com{u\partial_x+v\partial_y, ~  \Lambda^{-1/3} }f,
\end{split}
\end{eqnarray}
with the boundary condition 
\begin{eqnarray}\label{bdc++}
\Lambda^{-1/3}f\big|_{t=0}= \Lambda^{-1/3}f\big|_{t=T}=0,\quad  \partial_y\Lambda^{-1/3} f\big|_{y=0}=0.
\end{eqnarray}
Now we take $L^2(  [0, T]\times \mathbb R_+^2)$ inner product  with the function   $ 
-\partial_y^2 \Lambda^{-1/3}f\in L^2(  [0, T]\times \mathbb R_+^2)$ on both sides of \reff{dtl+},  and then consider the real parts;  this gives 
\begin{equation}\label{141224}
     \norm{\partial_y^2 
 \Lambda^{-1/3}f}_{L^2(\mathbb
R_+^3)}^2 \leq \sum_{p=1}^4 J_p,
\end{equation}
where
\begin{eqnarray*}
J_1&=&\abs{{\rm Re}~\inner{\partial_t   \Lambda^{-1/3}  f,~  \partial_y^2  \Lambda^{-1/3}  f}_{L^2([0,T]\times\mathbb
     R_+^2)}},\\
J_2&=& \abs{{\rm Re}~\inner{
     \inner{u\partial_x +v\partial_y
     } \Lambda^{-1/3} f,~
     \partial_y^2  \Lambda^{-1/3}f }_{[0,T]\times L^2([0,T]\times\mathbb R_+^2)}}, \\
J_3&=&   \abs{{\rm Re}~\inner{ \Lambda^{-1/3}
     h,~   \partial_y^2\Lambda^{-1/3} f }_{L^2(  [0, T]\times \mathbb R_+^2)}},\\
J_4&=&  \abs{{\rm Re}~\inner{
     \com{u\partial_x+v\partial_y ,~  \Lambda^{-1/3}
} f,~   \partial_y^2\Lambda^{-1/3} f }_{L^2([0,T]\times\mathbb R_+^2)}}. 
\end{eqnarray*}
Integrating by parts and observing the condition \reff{bdc++}, we see
\begin{eqnarray*}
 \inner{ \partial_t   \Lambda^{-1/3}  f,~  \partial_y^2  \Lambda^{-1/3}  f}_{L^2([0,T]\times\mathbb
     R_+^2)} &=& -\inner{\partial_t \partial_y   \Lambda^{-1/3}  f,~  \partial_y  \Lambda^{-1/3}  f}_{L^2([0,T]\times\mathbb
     R_+^2)},
\end{eqnarray*}
which along with the fact 
\begin{eqnarray*}
{\rm Re}~ \inner{\partial_t\partial_y  \Lambda^{-1/3}  f,~  \partial_y    \Lambda^{-1/3}  f}_{L^2([0,T]\times\mathbb
     R_+^2)}=0
\end{eqnarray*}
due to \reff{bdc++}, 
 implies
\begin{eqnarray*}
J_1= \abs{{\rm Re}~\inner{\partial_t   \Lambda^{-1/3}  f,~  \partial_y^2  \Lambda^{-1/3}  f}_{L^2([0,T]\times\mathbb
     R_+^2)}}=0.
\end{eqnarray*}
About $J_2$ we integrate by parts again and observe the boundary conditition \reff{bdc++}, to compute 
\begin{eqnarray*}
&&{\rm Re}\inner{u\p \Lambda^{-1/3}f,
~\partial_y^2 \Lambda^{-1/3} f}_{L^{2}(  [0, T]\times \mathbb R_+^2)}\\
&=&- {\rm Re}\inner{u \p   \Lambda^{-1/3}\partial_y f,~
 \Lambda^{-1/3}\partial_{y}f}_{L^{2}(  [0, T]\times \mathbb R_+^2)}-{\rm Re}~\inner{(\partial_yu)\partial_x \Lambda^{-1/3}f,~
 \Lambda^{-1/3}\partial_{y}f}_{ L^{2}\inner{[0,T]\times\RR^2_+}}\\
&=& \frac{1}{2}\inner{\inner{\p u}  \Lambda^{-1/3}\partial_y f,~
 \Lambda^{-1/3}\partial_{y}f}_{L^{2}(  [0, T]\times \mathbb R_+^2)}-{\rm Re}~\inner{(\partial_yu)\partial_x \Lambda^{-1/3}f,~
 \Lambda^{-1/3}\partial_{y}f}_{ L^{2}\inner{[0,T]\times\RR^2_+}}.
\end{eqnarray*}
This gives
\begin{eqnarray*}
&&\abs{{\rm Re}\inner{ u\p \Lambda^{-1/3}f,
~\partial_y^2 \Lambda^{-1/3} f}_{L^{2}(  [0, T]\times \mathbb R_+^2)}}\\
 &\leq& \| \Lambda^{-1/3}\inner{\partial_{y}u}\partial_x  \Lambda^{-1/3}f\|_{L^{2}\inner{  [0, T]\times \mathbb R_+^2}}  \|   \partial_y f\|_{L^{2}(  [0, T]\times \mathbb R_+^2)}+ C \|  \partial_y  f\|_{L^{2}(  [0, T]\times \mathbb R_+^2)}^2\\
 & \leq &
 \inner{\|\inner{\partial_{y}u}   \Lambda^{-1/3}\partial_x \Lambda^{-1/3}f\|_{L^{2}\inner{  [0, T]\times \mathbb R_+^2}} + \|\com{  \Lambda^{-1/3},~\partial_{y}u} \partial_x\Lambda^{-1/3}f\|_{L^{2}\inner{  [0, T]\times \mathbb R_+^2}}}  \|   \partial_y f\|_{L^{2}(  [0, T]\times \mathbb R_+^2)}\\
 &&+C\|   \partial_y f\|_{L^{2}(  [0, T]\times \mathbb R_+^2)}^2\\
& \leq &
C \inner{ \|\inner{\partial_{y}u}^{1/2}  \partial_x  \Lambda^{-2/3}f\|_{L^{2}\inner{  [0, T]\times \mathbb R_+^2}}+ \|f\|_{L^{2}\inner{  [0, T]\times \mathbb R_+^2}}}  \|   \partial_y f\|_{L^{2}(  [0, T]\times \mathbb R_+^2)} +C\norm{\partial_y f}_{L^{2}\inner{  [0, T]\times \mathbb R_+^2}}^2\\
&\leq&~\eps ~\|\inner{\partial_{y}u}^{1/2}\partial_x   \Lambda^{-2/3}f\|_{L^{2}\inner{  [0, T]\times \mathbb R_+^2}}^{2} +C_\eps \inner{ \norm{\partial_y f}_{L^{2}(  [0, T]\times \mathbb R_+^2)}^{2}+\norm{ f}_{L^{2}(  [0, T]\times \mathbb R_+^2)}^{2}}.
\end{eqnarray*}
 Moreover integrating by part,  we obtain   
\begin{eqnarray*}  \abs{{\rm Re}\inner{ v\partial_y \Lambda^{-1/3}f,
~\partial_y^2 \Lambda^{-1/3} f}_{L^{2}(  [0, T]\times \mathbb R_+^2)}}
&=&\frac{1}{2}\abs{\inner{\inner{\partial_{y}v}\partial_{y} \Lambda^{-1/3}  f,~
\partial_{y} \Lambda^{-1/3}f}_{L^{2}([0,T]\times\mathbb R_+^2)}}\\
&\leq&~ C \norm{\partial_y f}_{L^{2}(  [0, T]\times \mathbb R_+^2)}^{2}.
\end{eqnarray*}
Thus
\begin{eqnarray}\label{J2}
J_2
\leq~\eps ~\|\inner{\partial_{y}u}^{1/2}\partial_x   \Lambda^{-2/3}f\|_{L^{2}\inner{  [0, T]\times \mathbb R_+^2}}^{2} +C_\eps \inner{ \norm{\partial_y f}_{L^{2}(  [0, T]\times \mathbb R_+^2)}^{2}+\norm{ f}_{L^{2}(  [0, T]\times \mathbb R_+^2)}^{2}}.
\end{eqnarray}
It remains to estimate $J_3$ and $J_4$. 
Let $\tilde \eps>0$ be an arbitrarily small number.    Cauchy-Schwarz's inequality gives
\begin{eqnarray*}
J_3 &=& \abs{{\rm Re}~\inner{ \Lambda^{-1/3}
     h,~   \partial_y^2\Lambda^{-1/3} f }_{L^2(  [0, T]\times \mathbb R_+^2)}} \\\\
&\leq& \tilde\eps ~\norm{
     \partial_y^2  \Lambda^{-1/3} f }_{L^2(  [0, T]\times \mathbb R_+^2)}^2+ C_{\tilde\epsilon} \norm{
       \Lambda^{-1/3} h }_{L^2(\mathbb
      R_+^3)}^2,
\end{eqnarray*}
and for $J_4$,  Lemma \ref{lemma3.1} implies 
\begin{eqnarray*} 
J_4&=&  \abs{{\rm Re}~\inner{
     \com{u\partial_x+v\partial_y ,~  \Lambda^{-1/3}
} f,~   \partial_y^2\Lambda^{-1/3} f }_{L^2(\mathbb R_+^2)}}\\
&\leq&   \tilde\eps~ \norm{
     \partial_y^2  \Lambda^{-1/3} f }_{L^2(  [0, T]\times \mathbb R_+^2)}^2+C_{\tilde\epsilon}\inner{ \norm{
       f }_{L^2(\mathbb
      R_+^3)}^2+\norm{
       \partial_y f }_{L^2(\mathbb
      R_+^3)}^2},
      \end{eqnarray*}
where $\tilde C_\eps$  is constant depending on $\tilde\eps.$ 
Now the above two estimates for $J_3$ and $J_4$, along with \reff{141224} - \reff{J2},  gives
\begin{eqnarray*}
 \norm{
     \partial_y^2  \Lambda^{-1/3} f }_{L^2(  [0, T]\times \mathbb R_+^2)}^2&\leq& \tilde\eps~ \norm{
     \partial_y^2  \Lambda^{-1/3} f }_{L^2(  [0, T]\times \mathbb R_+^2)}^2+\eps ~\|\inner{\partial_{y}u}^{1/2}\partial_x   \Lambda^{-2/3}f\|_{L^{2}\inner{  [0, T]\times \mathbb R_+^2}}^{2}\\
     && +C_\eps \inner{ \norm{\partial_y f}_{L^{2}(  [0, T]\times \mathbb R_+^2)}^{2}+\norm{ f}_{L^{2}(  [0, T]\times \mathbb R_+^2)}^{2}}\\
     &&+C_{\tilde\epsilon} \inner{\norm{
       \Lambda^{-1/3} h }_{L^2(\mathbb
      R_+^3)}^2+  \norm{
       f }_{L^2(\mathbb
      R_+^3)}^2+\norm{\partial_y
       f }_{L^2(\mathbb
      R_+^3)}^2},
\end{eqnarray*}
and thus, letting $\tilde\eps$ small sufficiently, 
\begin{eqnarray*}
     &&\norm{
 \partial_y^2  \Lambda^{-1/3} f }_{L^2(  [0, T]\times \mathbb R_+^2)}^2\\
    &\leq& \eps ~\|\inner{\partial_{y}u}^{1/2}\partial_x   \Lambda^{-2/3}f\|_{L^{2}\inner{  [0, T]\times \mathbb R_+^2}}^{2}+C_\eps \inner{ \norm{\partial_y f}_{L^{2}(  [0, T]\times \mathbb R_+^2)}^{2}+\norm{ f}_{L^{2}(  [0, T]\times \mathbb R_+^2)}^{2} +\norm{
       \Lambda^{-1/3} h }_{L^2(\mathbb
      R_+^3)}^2}.
\end{eqnarray*}
This is just the desired estimate \reff{020602}.

Combining the estimates \reff{stepb} and \reff{020602},  we obtain, choosing $\eps$ sufficiently small,
\begin{eqnarray}\label{step3s}
\begin{split}
 &\norm{\inner{\partial_y u}^{1/2} \partial_x   \Lambda^{-2/3}f}_{L^2(   [0, T]\times \mathbb R_+^2)}^2+\norm{\partial_y^2   \Lambda^{-1/3} f}_{L^2(  [0, T]\times \mathbb R_+^2)} ^2\\
  &\leq C \abs{{\rm Re}~\inner{  \partial_t       \Lambda^{-2/3}f,~
   \partial_y \partial_x    \Lambda^{-2/3}  f }_{L^2(   [0, T]\times \mathbb R_+^2)}}  \\
&\quad+C\inner{ \norm{\Lambda^{-1/3} h}_{L^2(  [0, T]\times \mathbb R_+^2)}^2+  \norm{\partial_y f}_{L^2(  [0, T]\times \mathbb R_+^2)}^2 +\norm{ f}_{L^2(  [0, T]\times \mathbb R_+^2)}^2}.
\end{split}
\end{eqnarray}

\medskip
{\it  Step 3)} It remains to treat the first term on the right hand side of \reff{step3s}. 
In this step we will prove that,  for any  $\eps_1>0$,
\begin{eqnarray}\label{stepc}
\begin{split}
&\abs{{\rm Re}~\inner{ \partial_t     \Lambda^{-2/3}f,~
   \partial_y \partial_x  \Lambda^{-2/3}f }_{L^2([0,T]\times\mathbb R_+^2)}} \\
     \leq & ~ \eps_1  \int_0^T\int_{\mathbb R}\abs{ \inner{  \partial_y^2      \Lambda^{-1/2}f}(t,x,0)  }^2dxdt+C_{\eps_1} \norm{\comii y\Lambda^{-1/3} \partial_y g }_{L^2([0,T]\times\mathbb R_+^2)}^2
    \\
 &~+\eps_1^{-1}C    \inner{\norm{ \comii y^{-\sigma/2}    \Lambda^{1/6} f }_{L^2([0,T]\times \mathbb R_+^2)}^2+ \norm{\comii y^{-\sigma/2}  \partial_y      \Lambda^{1/6} f}_{L^2([0,T]\times\mathbb R_+^2)}^2}.
\end{split}
 \end{eqnarray}
 For this purpose we 
integrate by parts again and observe the boundary  condition \reff{bdc} ,  to compute 
\begin{eqnarray*}
&& \inner{ \partial_t     \Lambda^{-2/3}f,~
   \partial_y \partial_x  \Lambda^{-2/3}f }_{L^2([0,T]\times\mathbb R_+^2)}  \\
     &=&-\inner{ \Lambda^{-2/3}f,~
     \partial_t     \partial_y\partial_x  \Lambda^{-2/3}f }_{L^2([0,T]\times\mathbb R_+^2)} \\
     &=&\inner{\partial_x  \Lambda^{-2/3}f,~
     \partial_t   \partial_y   \Lambda^{-2/3}f }_{L^2([0,T]\times\mathbb R_+^2)} \\
     &=&-\inner{\partial_y\partial_x   \Lambda^{-2/3}f,~
     \partial_t      \Lambda^{-2/3}f }_{L^2([0,T]\times\mathbb R_+^2)}+\int_0^T\int_{\mathbb R}\inner{   \partial_t      \Lambda^{-2/3}f(t,x,0)  } \inner{ \partial_x     \Lambda^{-2/3} f(t,x,0)}dxdt,
     \end{eqnarray*}
which, along with the fact that
\begin{eqnarray*}
&& 2 ~{\rm Re}~\inner{ \partial_t    \Lambda^{-2/3}f,~
   \partial_y \partial_x  \Lambda^{-2/3}f }_{L^2([0,T]\times\mathbb R_+^2)}\\
     &=&\inner{ \partial_t     \Lambda^{-2/3}f,~
   \partial_y \partial_x  \Lambda^{-2/3}f }_{L^2([0,T]\times\mathbb R_+^2)}+\inner{\partial_y\partial_x   \Lambda^{-2/3}f,~
     \partial_t      \Lambda^{-2/3}f }_{L^2([0,T]\times\mathbb R_+^2)},
\end{eqnarray*}
yields,  for any $\eps_1>0,$
\begin{eqnarray}\label{reptl}
\begin{split}
& \abs{{\rm Re}~\inner{ \partial_t      \Lambda^{-2/3}f,~
   \partial_y \partial_x  \Lambda^{-2/3}f }_{L^2([0,T]\times\mathbb R_+^2)}}\\
     &=\frac{1}{2}\abs{\int_0^T\int_{\mathbb R}\inner{   \partial_t      \Lambda^{-2/3}f(t,x,0)  } \inner{ \partial_x     \Lambda^{-2/3} f(t,x,0)}dxdt}\\
     &=\frac{1}{2}\abs{\int_0^T\int_{\mathbb R}\inner{  \Lambda^{1/6} \partial_t      \Lambda^{-2/3}f(t,x,0)  } \inner{ \Lambda^{-1/6}\partial_x     \Lambda^{-2/3} f(t,x,0)}dxdt} \\
     &\leq \eps_1 \int_0^T\int_{\mathbb R}\inner{   \partial_t      \Lambda^{-1/2}f(t,x,0)  }^2dxdt+\eps_1^{-1}    \int_0^T\int_{\mathbb R}  \inner{      \Lambda^{1/6} f(t,x,0)}^2 dxdt.
\end{split}
\end{eqnarray}
Moreover observing
\begin{eqnarray*}
    \Lambda^{1/6} f(t,x,0)=\inner{\comii y^{-\sigma/2}    \Lambda^{1/6} f}(t,x,0),
\end{eqnarray*}
it then follows from Sobolev inequality that 
\begin{eqnarray*}
\abs{    \Lambda^{1/6} f(t,x,0)}^2&\leq& C \inner{\norm{ \comii y^{-\sigma/2}    \Lambda^{1/6} f }_{L^2(\mathbb R_+)}^2+ \norm{\partial_y \comii y^{-\sigma/2}    \Lambda^{1/6} f}_{L^2(\mathbb R_+)}^2} \\
&\leq& C \inner{\norm{ \comii y^{-\sigma/2}    \Lambda^{1/6} f }_{L^2(\mathbb R_+)}^2+ \norm{ \comii y^{-\sigma/2}  \partial_y  \Lambda^{1/6} f}_{L^2(\mathbb R_+)}^2} 
\end{eqnarray*}
with $C$ a constant independent of $t,x$.  And thus
\begin{eqnarray}\label{lamfone}
\begin{split}	
&  \int_0^T\int_{\mathbb R}  \inner{      \Lambda^{1/6} f(t,x,0)}^2 dxdt\\
  &\leq  C \inner{\norm{ \comii y^{-\sigma/2}    \Lambda^{1/6} f }_{L^2([0,T]\times \mathbb R_+^2)}^2+ \norm{\partial_y \comii y^{-\sigma/2}    \Lambda^{1/6} f}_{L^2([0,T]\times\mathbb R_+^2)}^2}\\
  &\leq  C \inner{\norm{ \comii y^{-\sigma/2}    \Lambda^{1/6} f }_{L^2([0,T]\times \mathbb R_+^2)}^2+ \norm{ \comii y^{-\sigma/2}    \Lambda^{1/6}\partial_y f}_{L^2([0,T]\times\mathbb R_+^2)}^2}.
\end{split}
\end{eqnarray}
Using the fact that 
\begin{eqnarray*}
\partial_t    \Lambda^{-1/2} f(t,x,0)=\inner{\partial_y^2    \Lambda^{-1/2} f}(t,x,0)+ \Lambda^{-1/2}g(t,x,0)
\end{eqnarray*}
due to assumption \reff{bdc+}, we conclude
\begin{eqnarray*}
&& \int_0^T\int_{\mathbb R}\inner{   \partial_t      \Lambda^{-1/2}f(t,x,0)  }^2dxdt  \\
&\leq &  \int_0^T\int_{\mathbb R}\abs{ \inner{  \partial_y^2      \Lambda^{-1/2}f}(t,x,0)  }^2 dxdt+ \int_0^T\int_{\mathbb R}\abs{   \Lambda^{-1/2}g(t,x,0)  }^2 dxdt.
\end{eqnarray*}
Moreover observe
\begin{eqnarray*}
	\abs{ \Lambda^{-1/2}g(t,x,0)} &=&\abs{-\int_0^{+\infty} \partial_{\tilde y} \Lambda^{-1/2} g(t,x,\tilde y)d\tilde y}\\
	&\leq & \inner{ \int_0^{+\infty} \comii{\tilde y}^{-2\sigma}d\tilde y}^{1/2}\inner{ \int_0^{+\infty} \comii{\tilde y}^{2\sigma}\abs{\Lambda^{-1/2} \partial_{\tilde y} g(t,x,\tilde y)}^2 d\tilde y}^{1/2},
\end{eqnarray*}
which implies 
\begin{eqnarray*}
	  \int_0^T\int_{\mathbb R}\abs{   \Lambda^{-1/2}g(t,x,0)  }^2 dxdt  \leq C\norm{\comii y^\sigma\Lambda^{-1/2} \partial_yg }_{L^2([0,T]\times\mathbb R_+^2)}^2 \leq C\norm{\comii y^\sigma\Lambda^{-1/3} \partial_yg }_{L^2([0,T]\times\mathbb R_+^2)}^2 ,
\end{eqnarray*}
and thus
\begin{eqnarray*}
&& \int_0^T\int_{\mathbb R}\inner{   \partial_t      \Lambda^{-1/2}f(t,x,0)  }^2dxdt  \\
&\leq &  \int_0^T\int_{\mathbb R}\abs{ \inner{  \partial_y^2      \Lambda^{-1/2}f}(t,x,0)  }^2 dxdt+C\norm{\comii y^\sigma\Lambda^{-1/3} \partial_yg }_{L^2([0,T]\times\mathbb R_+^2)}^2.
\end{eqnarray*}
This along with \reff{reptl} and  \reff{lamfone} yields the desired \reff{stepc}.

\medskip
{\it Step 4)}  Combining  \reff{step3s} and  \reff{stepc},   we have, for any $\eps_1>0,$
\begin{eqnarray*}
&&\norm{\inner{\partial_y u}^{1/2} \partial_x   \Lambda^{-2/3}f}_{L^2(   [0, T]\times \mathbb R_+^2)}^2+\norm{\partial_y^2   \Lambda^{-1/3} f}_{L^2(  [0, T]\times \mathbb R_+^2)} ^2\\ & \leq & \eps_1 \int_0^T\int_{\mathbb R}\abs{ \inner{  \partial_y^2      \Lambda^{-1/2}f}(t,x,0)  }^2dxdt+C_{\eps_1} \norm{\comii y^\sigma \Lambda^{-1/3} \partial_yg }_{L^2([0,T]\times\mathbb R_+^2)}^2\\
 &&~+ \eps_1^{-1}C    \inner{\norm{ \comii y^{-\sigma/2}    \Lambda^{1/6} f }_{L^2([0,T]\times \mathbb R_+^2)}^2+ \norm{\comii y^{-\sigma/2}   \partial_y   \Lambda^{1/6} f}_{L^2([0,T]\times\mathbb R_+^2)}^2} \\
    &&+C \inner{ \norm{\partial_y f}_{L^{2}(  [0, T]\times \mathbb R_+^2)}^{2}+\norm{ f}_{L^{2}(  [0, T]\times \mathbb R_+^2)}^{2} +\norm{
       \Lambda^{-1/3} h }_{L^2(\mathbb
      R_+^3)}^2}.
\end{eqnarray*}
Moreover we use the monotonicity condition and interpolation inequality to get, for any $\eps_2>0$
\begin{eqnarray*}
	\norm{ \comii y^{-\sigma/2}    \Lambda^{1/6} f }_{L^2([0,T]\times \mathbb R_+^2)}^2&\leq &\eps_2 \norm{\comii y^{-\sigma/2} \Lambda^{1/3} f }_{L^2([0,T]\times \mathbb R_+^2)}^2+ \eps_2^{-1} \norm{\comii y^{-\sigma/2}   f }_{L^2([0,T]\times \mathbb R_+^2)}^2\\
	&\leq & \eps_2 \norm{\comii y^{-\sigma/2}  \partial_x \Lambda^{-2/3} f }_{L^2([0,T]\times \mathbb R_+^2)}^2+ C_{\eps_2} \norm{\comii y^{-\sigma/2}   f }_{L^2([0,T]\times \mathbb R_+^2)}^2\\
	&\leq & \eps_2 \norm{\inner{\partial_y u} ^{1/2} \partial_x \Lambda^{-2/3} f }_{L^2([0,T]\times \mathbb R_+^2)}^2+ C_{\eps_2} \norm{   f }_{L^2([0,T]\times \mathbb R_+^2)}^2.
\end{eqnarray*}
From the above inequalities,  we infer that,  choosing $\eps_2$ small enough, 
\begin{eqnarray}\label{fiter}
	\begin{split}
		&\norm{\inner{\partial_y u}^{1/2} \partial_x   \Lambda^{-2/3}f}_{L^2(   [0, T]\times \mathbb R_+^2)}^2+\norm{\partial_y^2   \Lambda^{-1/3} f}_{L^2(  [0, T]\times \mathbb R_+^2)} ^2\\ 
		 \leq &\eps_1  \int_0^T\int_{\mathbb R}\abs{ \inner{  \partial_y^2      \Lambda^{-1/2}f}(t,x,0)  }^2dxdt\\
		&+C_{\eps_1}   \inner{  \norm{\comii y^{-\sigma/2}   \partial_y   \Lambda^{1/6} f}_{L^2([0,T]\times\mathbb R_+^2)}^2+\norm{\comii y^\sigma\Lambda^{-1/3} \partial_yg }_{L^2([0,T]\times\mathbb R_+^2)}^2}\\
    &+C_{\eps_1} \inner{ \norm{\partial_y f}_{L^{2}(  [0, T]\times \mathbb R_+^2)}^{2}+\norm{ f}_{L^{2}(  [0, T]\times \mathbb R_+^2)}^{2} +\norm{
       \Lambda^{-1/3} h }_{L^2(\mathbb
      R_+^3)}^2}.
	\end{split}
\end{eqnarray}

\medskip
{\it Step 5)}  
 In this step we treat the first term on the right side of \reff{fiter},   and show that,  for any  $0<\eps<1,$
\begin{eqnarray}\label{step++}
\begin{split}
& \int_0^T\int_{\mathbb R}\abs{ \inner{  \partial_y^2      \Lambda^{-1/2}f}(t,x,0)  }^2dxdt\\
\leq & C \norm{(\partial_y u)^{1/2}\partial_x  \Lambda^{-2/3}  f}_{L^2\inner{[0,T]\times\mathbb R_+^2}}^2+\eps C \norm{\Lambda^{-2/3}\partial_y  h}_{L^2\inner{[0,T]\times\mathbb R_+^2}}^2\\
    &+ C_\eps \inner{\norm{f}_{L^2\inner{[0,T]\times\mathbb R_+^2}}^2  +\norm{\partial_y  f}_{L^2\inner{[0,T]\times\mathbb R_+^2}}^2}.
\end{split}
\end{eqnarray}
To do so,  we   integrate by parts to get
\begin{eqnarray*}
 \int_0^T\int_{\mathbb R}\abs{ \inner{  \partial_y^2      \Lambda^{-1/2}f}(t,x,0)  }^2dxdt
&=&   2{\rm Re}~ \inner{  \partial_y^3   \Lambda^{-1/2}f ,~   \partial_y^2 \Lambda^{-1/2}f }_{L^2\inner{[0,T]\times\mathbb R_+^2}}\\
&=& 2  {\rm Re}~ \inner{  \partial_y^3   \Lambda^{-2/3}f ,~   \partial_y^2 \Lambda^{-1/3}f }_{L^2\inner{[0,T]\times\mathbb R_+^2}}.
\end{eqnarray*}
This yields
\begin{eqnarray}\label{stepsdss}
\begin{split}
& \int_0^T\int_{\mathbb R}\abs{ \inner{  \partial_y^2      \Lambda^{-1/2}f}(t,x,0)  }^2dxdt\\
\leq &   ~   \frac{\eps}{2}\norm{  \partial_y^3  \Lambda^{-2/3}f }_{L^2\inner{[0,T]\times\mathbb R_+^2}}^2+ 2\eps^{-1}\norm{ \partial_y^2   \Lambda^{-1/3}f }_{L^2\inner{[0,T]\times\mathbb R_+^2}}^2\\
\leq & ~    \eps \norm{  \partial_y^3  \Lambda^{-2/3}f }_{L^2\inner{[0,T]\times\mathbb R_+^2}}^2+ C_\eps \norm{ \partial_yf }_{L^2\inner{[0,T]\times\mathbb R_+^2}}^2,
\end{split}
\end{eqnarray}
 the last inequality holding because we can use \reff{bdc+}  to integrate by parts and then obtain
\begin{eqnarray}\label{eqalin}
\begin{split}
	&\norm{ \partial_y^2
     \Lambda^{-1/3} f}_{L^2\inner{[0,T]\times\mathbb R_+^2}}^2=\inner{\partial_y^2
     \Lambda^{-2/3} f,~\partial_y^2
     f}_{L^2\inner{[0,T]\times\mathbb R_+^2}} \leq \abs{\inner{\partial_y^3
     \Lambda^{-2/3} f,~\partial_y
     f}_{L^2\inner{[0,T]\times\mathbb R_+^2}}}\\
     & \leq \norm{ \partial_y^3
     \Lambda^{-2/3} f}_{L^2\inner{[0,T]\times\mathbb R_+^2}} \norm{ \partial_y  f} _{L^2\inner{[0,T]\times\mathbb R_+^2}}.
\end{split}
\end{eqnarray}
Thus in order to prove \reff{step++} it suffices to estimate $\norm{  \partial_y^3  \Lambda^{-2/3}f }_{L^2\inner{[0,T]\times\mathbb R_+^2}}.$  We study the equation
\begin{eqnarray*}
&&\partial_t  \Lambda^{-2/3}\partial_y  f+u\partial_x  \Lambda^{-2/3} \partial_y f +v\partial_y  \Lambda^{-2/3}  \partial_y f -\partial_y^3
     \Lambda^{-2/3}  f \\
   & =&~ \Lambda^{-2/3}  \partial_y h+ \com{u\partial_x+v\partial_y, ~  \Lambda^{-2/3} }\partial_y f-  \Lambda^{-2/3} (\partial_y u)\partial_x f-   \Lambda^{-2/3}(\partial_y v) \partial_y f,
\end{eqnarray*}
which implies,  by taking $L^2$ inner product with  $-\partial_y^3
     \Lambda^{-2/3}  f,$
    \begin{eqnarray}
   &    \norm{\partial_y^3
     \Lambda^{-2/3}  f}_{L^2\inner{[0,T]\times\mathbb R_+^2}}^2
    =&-{\rm Re}~\inner{\partial_t  \Lambda^{-2/3}\partial_y  f,~\partial_y^3
     \Lambda^{-2/3}  f}_{L^2\inner{[0,T]\times\mathbb R_+^2}}\nonumber\\
    && -{\rm Re}~\inner{u\partial_x  \Lambda^{-2/3}\partial_y  f+v\partial_y  \Lambda^{-2/3}\partial_y  f,~\partial_y^3
     \Lambda^{-2/3}  f}_{L^2\inner{[0,T]\times\mathbb R_+^2}}\nonumber\\
    &&+{\rm Re}~\inner{ \Lambda^{-2/3}  \partial_y h,~\partial_y^3
     \Lambda^{-2/3}  f}_{L^2\inner{[0,T]\times\mathbb R_+^2}}\label{3.11}\\
    &&   +~{\rm Re}~\inner{\com{u\partial_x+v\partial_y, ~  \Lambda^{-2/3} }\partial_yf,~\partial_y^3
     \Lambda^{-2/3}  f}_{L^2\inner{[0,T]\times\mathbb R_+^2}}\nonumber\\
    &&-{\rm Re}~\inner{ \Lambda^{-2/3}(\partial_y u)\partial_x   f,~\partial_y^3
     \Lambda^{-2/3}  f}_{L^2\inner{[0,T]\times\mathbb R_+^2}}\nonumber\\
    && -~{\rm Re}~\inner{ \Lambda^{-2/3}(\partial_y v)\partial_y   f,~\partial_y^3
     \Lambda^{-2/3}  f}_{L^2\inner{[0,T]\times\mathbb R_+^2}}.\nonumber    \end{eqnarray}
Next we will treat the terms on the right hand side.  Observing 
\begin{eqnarray*}
\partial_t  \Lambda^{-2/3}\partial_y  f\big|_{y=0}=0
\end{eqnarray*}
due to \reff{bdc+},  we integrate by part to compute
\begin{eqnarray*}
\begin{split}
&-{\rm Re}~\inner{\partial_t  \Lambda^{-2/3}\partial_y  f,~-\partial_y^3
     \Lambda^{-2/3}  f}_{L^2\inner{[0,T]\times\mathbb R_+^2}}\\
    =&-{\rm Re}~\inner{\partial_t \partial_y^2 \Lambda^{-2/3}  f,~\partial_y^2
     \Lambda^{-2/3}  f}_{L^2\inner{[0,T]\times\mathbb R_+^2}}\\
    =&~0,
\end{split}
\end{eqnarray*}
the last equality holding because  
\begin{eqnarray*}
\partial_y^2 \Lambda^{-2/3}  f\big|_{t=0}=\partial_y^2 \Lambda^{-2/3}  f\big|_{t=T}=0
\end{eqnarray*}
due to \reff{ib}.   Since $u\big|_{y=0}$ then integrating by parts gives
\begin{eqnarray*}
&&-~{\rm Re}~\inner{u\partial_x  \Lambda^{-2/3}\partial_y  f,~-\partial_y^3
     \Lambda^{-2/3}  f}_{L^2\inner{[0,T]\times\mathbb R_+^2}}\\
    &=&-~{\rm Re}~\inner{u\partial_x  \Lambda^{-2/3}\partial_y^2  f,~\partial_y^2
     \Lambda^{-2/3}  f}_{L^2\inner{[0,T]\times\mathbb R_+^2}}\\
    &&-~{\rm Re}~\inner{(\partial_y u)\partial_x  \Lambda^{-2/3}\partial_y  f,~\partial_y^2
     \Lambda^{-2/3}  f}_{L^2\inner{[0,T]\times\mathbb R_+^2}}\\
    &=&\frac{1}{2} \inner{(\partial_x u)  \Lambda^{-2/3}\partial_y^2  f,~
     \Lambda^{-2/3} \partial_y^2 f}_{L^2\inner{[0,T]\times\mathbb R_+^2}}\\
    &&-~{\rm Re}~\inner{(\partial_y u)\partial_x  \Lambda^{-2/3}\partial_y  f,~\partial_y^2
     \Lambda^{-2/3}  f}_{L^2\inner{[0,T]\times\mathbb R_+^2}}\\
    &\leq& \frac{1}{2}\norm{\partial_x u}_{L^\infty} \norm{  \Lambda^{-2/3}\partial_y^2  f}_{L^2\inner{[0,T]\times\mathbb R_+^2}}\\
    &&+ \norm{\Lambda^{-1/3} (\partial_y u)\partial_x  \Lambda^{-2/3}\partial_y  f}_{L^2\inner{[0,T]\times\mathbb R_+^2}}^2+ \norm{ \partial_y^2
     \Lambda^{-1/3}  f}_{L^2\inner{[0,T]\times\mathbb R_+^2}}^2.
\end{eqnarray*}
On the other hand,  using Lemma \ref{lemma3.1} gives
\begin{eqnarray*}
 &&\norm{\Lambda^{-1/3} (\partial_y u)\partial_x  \Lambda^{-2/3}\partial_y  f}_{L^2\inner{[0,T]\times\mathbb R_+^2}}^2\\
 &\leq &  2\norm{\Lambda^{-1/3} \partial_x  \Lambda^{-2/3}(\partial_y u)\partial_y  f}_{L^2\inner{[0,T]\times\mathbb R_+^2}}^2 + 2\norm{\Lambda^{-1/3} \com{\partial_y u,~\partial_x  \Lambda^{-2/3}}\partial_y  f}_{L^2\inner{[0,T]\times\mathbb R_+^2}}^2\\
 &\leq &C \norm{\partial_y  f}_{L^2\inner{[0,T]\times\mathbb R_+^2}}^2.
\end{eqnarray*}
Thus
\begin{eqnarray*}
\begin{split}
&-~{\rm Re}~\inner{u\partial_x  \Lambda^{-2/3}\partial_y  f,~-\partial_y^3
     \Lambda^{-2/3}  f}_{L^2\inner{[0,T]\times\mathbb R_+^2}}\\
    \leq&~ C ~\inner{ \norm{ \partial_y  f}_{L^2\inner{[0,T]\times\mathbb R_+^2}}^2+\norm{ \partial_y^2
     \Lambda^{-1/3} f}_{L^2\inner{[0,T]\times\mathbb R_+^2}}^2}\\
    \leq&  ~\tilde\eps ~ \norm{ \partial_y^3
     \Lambda^{-2/3} f}_{L^2\inner{[0,T]\times\mathbb R_+^2}}^2+C_{\tilde\eps}\norm{ \partial_y  f} _{L^2\inner{[0,T]\times\mathbb R_+^2}}^2,
\end{split}
\end{eqnarray*}
where the last inequality using \reff{eqalin}.  Using \reff{eqalin} we conclude   
\begin{eqnarray*}
\begin{split}
& -~{\rm Re}~\inner{v\partial_y  \Lambda^{-2/3}\partial_y  f,~-\partial_y^3
     \Lambda^{-2/3}  f}_{L^2\inner{[0,T]\times\mathbb R_+^2}}\\
& \leq ~\frac{\tilde\eps}{2}\norm{ \partial_y^3  \Lambda^{-2/3}   f}_{L^2\inner{[0,T]\times\mathbb R_+^2}}^2+C_{\tilde\eps } \norm{ \partial_y^2  \Lambda^{-1/3}   f}_{L^2\inner{[0,T]\times\mathbb R_+^2}}^2 \\
  &  \leq ~\tilde\eps ~ \norm{ \partial_y^3
     \Lambda^{-2/3} f}_{L^2\inner{[0,T]\times\mathbb R_+^2}}^2+C_{\tilde\eps}\norm{ \partial_y  f} _{L^2\inner{[0,T]\times\mathbb R_+^2}}^2.
\end{split}
\end{eqnarray*}
Cauchy-Schwarz inequality gives,  for any $\tilde\eps>0,$
\begin{eqnarray*}
\begin{split}
&{\rm Re}~\inner{ \Lambda^{-2/3}  \partial_y h,~-\partial_y^3
     \Lambda^{-2/3}  f}_{L^2\inner{[0,T]\times\mathbb R_+^2}}\\
   \leq &~\tilde \eps \norm{\partial_y^3
     \Lambda^{-2/3}  f}_{L^2\inner{[0,T]\times\mathbb R_+^2}}^2+\tilde\eps^{-1} \norm{\Lambda^{-2/3}  \partial_y h}_{L^2\inner{[0,T]\times\mathbb R_+^2}}^2,
\end{split}
\end{eqnarray*}
and 
\begin{eqnarray*}
\begin{split}
&-~{\rm Re}~\inner{ \Lambda^{-2/3}(\partial_y v)\partial_y   f,~-\partial_y^3
     \Lambda^{-2/3}  f}_{L^2\inner{[0,T]\times\mathbb R_+^2}}\\
    \leq &~ \tilde \eps \norm{\partial_y^3
     \Lambda^{-2/3}  f}_{L^2\inner{[0,T]\times\mathbb R_+^2}}^2+\tilde\eps^{-1} \norm{\partial_y v}_{L^\infty} ^2\norm{\partial_y f}_{L^2\inner{[0,T]\times\mathbb R_+^2}}^2
\end{split}
\end{eqnarray*}
and
\begin{eqnarray}\label{sq6}
\begin{split}
& {\rm Re}~\inner{\com{u\partial_x+v\partial_y, ~  \Lambda^{-2/3} }\partial_yf,~-\partial_y^3
     \Lambda^{-2/3}  f}_{L^2\inner{[0,T]\times\mathbb R_+^2}}\\
    \leq &~ \frac{\tilde \eps}{2} \norm{\partial_y^3
     \Lambda^{-2/3}  f}_{L^2\inner{[0,T]\times\mathbb R_+^2}}^2+2\tilde\eps^{-1} \norm{\com{u\partial_x+v\partial_y, ~  \Lambda^{-2/3} }\partial_y f}_{L^2\inner{[0,T]\times\mathbb R_+^2}}^2\\
    \leq &~\frac {\tilde \eps}{2} \norm{\partial_y^3
     \Lambda^{-2/3}  f}_{L^2\inner{[0,T]\times\mathbb R_+^2}}^2+C_{ \tilde\eps} \norm{ \partial_y f}_{L^2\inner{[0,T]\times\mathbb R_+^2}}^2+C_{ \tilde\eps} \norm{ \partial_y^2\Lambda^{-1/3} f}_{L^2\inner{[0,T]\times\mathbb R_+^2}}^2\\
    \leq &~ \tilde \eps~\norm{\partial_y^3
     \Lambda^{-2/3}  f}_{L^2\inner{[0,T]\times\mathbb R_+^2}}^2+C_{ \tilde\eps} \norm{ \partial_y f}_{L^2\inner{[0,T]\times\mathbb R_+^2}}^2,
\end{split}
\end{eqnarray}
the second inequality using  Lemma \ref{lemma3.1}, while the last inequality following from \reff{eqalin}.   Finally, 
\begin{eqnarray*}
&&-~{\rm Re}~\inner{ \Lambda^{-2/3}(\partial_y u)\partial_x   f,~-\partial_y^3
     \Lambda^{-2/3}  f}_{L^2\inner{[0,T]\times\mathbb R_+^2}}\\
    &\leq &  \tilde \eps \norm{\partial_y^3
     \Lambda^{-2/3}  f}_{L^2\inner{[0,T]\times\mathbb R_+^2}}^2+\tilde\eps^{-1} \norm{ \Lambda^{-2/3}(\partial_y u)\partial_x   f}_{L^2\inner{[0,T]\times\mathbb R_+^2}}^2\\
    &\leq &  \tilde \eps \norm{\partial_y^3
     \Lambda^{-2/3}  f}_{L^2\inner{[0,T]\times\mathbb R_+^2}}^2+  \tilde\eps^{-1}   \norm{(\partial_y u)\partial_x  \Lambda^{-2/3}  f}_{L^2\inner{[0,T]\times\mathbb R_+^2}}^2\\
    &&+\tilde\eps^{-1}  \norm{\com{\partial_y u,~ \Lambda^{-2/3}}   \partial_xf}_{L^2\inner{[0,T]\times\mathbb R_+^2}}^2\\
    &\leq &  \tilde \eps \norm{\partial_y^3
     \Lambda^{-2/3}  f}_{L^2\inner{[0,T]\times\mathbb R_+^2}}^2+  C_{\tilde\eps}  \norm{(\partial_y u)^{1/2}\partial_x  \Lambda^{-2/3}  f}_{L^2\inner{[0,T]\times\mathbb R_+^2}}^2\\
    &&+C_{\tilde\eps}  \norm{f}_{L^2\inner{[0,T]\times\mathbb R_+^2}}^2.
\end{eqnarray*}
This, along with \reff{3.11} -\reff{sq6}, yields, for any $\tilde\eps>0,$
 \begin{eqnarray*}
    &&\norm{\partial_y^3
     \Lambda^{-2/3}  f}_{L^2\inner{[0,T]\times\mathbb R_+^2}}^2\\
    &\leq & \tilde \eps \norm{\partial_y^3
     \Lambda^{-2/3}  f}_{L^2\inner{[0,T]\times\mathbb R_+^2}}^2+C_{\tilde\eps}  \norm{(\partial_y u)^{1/2}\partial_x  \Lambda^{-2/3}  f}_{L^2\inner{[0,T]\times\mathbb R_+^2}}^2\\
    &&+ C_{\tilde\eps}\inner{\norm{\Lambda^{-2/3}\partial_y  h}_{L^2\inner{[0,T]\times\mathbb R_+^2}}^2+\norm{\partial_y  f}_{L^2\inner{[0,T]\times\mathbb R_+^2}}^2+\norm{f}_{L^2\inner{[0,T]\times\mathbb R_+^2}}^2}.
    \end{eqnarray*}
Thus letting $\tilde \eps$ be small enough, we have
\begin{eqnarray}\label{lam3}
\begin{split}
 &\norm{\partial_y^3
     \Lambda^{-2/3}  f}_{L^2\inner{[0,T]\times\mathbb R_+^2}}^2\\
   & \leq  ~ C  \norm{(\partial_y u)^{1/2}\partial_x  \Lambda^{-2/3}  f}_{L^2\inner{[0,T]\times\mathbb R_+^2}}^2
    \\
    &\qquad+C\inner{\norm{\Lambda^{-2/3}\partial_y  h}_{L^2\inner{[0,T]\times\mathbb R_+^2}}^2+\norm{f}_{L^2\inner{[0,T]\times\mathbb R_+^2}}^2  +\norm{\partial_y  f}_{L^2\inner{[0,T]\times\mathbb R_+^2}}^2}.
\end{split}
\end{eqnarray}
This along with \reff{stepsdss} yields the desired estimate \reff{step++}.  

\medskip
{\it Step 6)}   Now we combine \reff{fiter} and \reff{step++} to conclude for any $0<\eps, \eps_1<1,$
\begin{eqnarray*}
	&&\norm{\inner{\partial_y u}^{1/2} \partial_x   \Lambda^{-2/3}f}_{L^2(   [0, T]\times \mathbb R_+^2)}^2+\norm{\partial_y^2   \Lambda^{-1/3} f}_{L^2(  [0, T]\times \mathbb R_+^2)} ^2\\ 
		&\leq& \eps_1  C\norm{\inner{\partial_y u}^{1/2} \partial_x   \Lambda^{-2/3}f}_{L^2(   [0, T]\times \mathbb R_+^2)}^2 +\eps_1\eps C \norm{\Lambda^{-2/3}\partial_y  h}_{L^2\inner{[0,T]\times\mathbb R_+^2}}^2\\
		&&+C_{\eps_1, \eps }   \inner{  \norm{\comii y^{-\sigma/2}   \partial_y   \Lambda^{1/6} f}_{L^2([0,T]\times\mathbb R_+^2)}^2+\norm{\comii y^\sigma\Lambda^{-1/3} \partial_yg }_{L^2([0,T]\times\mathbb R_+^2)}^2}\\
    &&+C_{\eps_1, \eps} \inner{ \norm{\partial_y f}_{L^{2}(  [0, T]\times \mathbb R_+^2)}^{2}+\norm{ f}_{L^{2}(  [0, T]\times \mathbb R_+^2)}^{2}+\norm{
       \Lambda^{-1/3} h }_{L^2(\mathbb
      R_+^3)}^2},
\end{eqnarray*}
which implies,  choosing $\eps_1>0$ sufficiently small, 
\begin{eqnarray*}
	&&\norm{\inner{\partial_y u}^{1/2} \partial_x   \Lambda^{-2/3}f}_{L^2(   [0, T]\times \mathbb R_+^2)}^2+\norm{\partial_y^2   \Lambda^{-1/3} f}_{L^2(  [0, T]\times \mathbb R_+^2)} ^2\\ 
		& \leq &\, \eps \,\norm{\Lambda^{-2/3}\partial_y  h}_{L^2\inner{[0,T]\times\mathbb R_+^2}}^2\\
		&&+C_{\eps}  \inner{  \norm{\comii y^{-\sigma/2}   \partial_y   \Lambda^{1/6} f}_{L^2([0,T]\times\mathbb R_+^2)}^2+\norm{\comii y^\sigma\Lambda^{-1/3} \partial_yg }_{L^2([0,T]\times\mathbb R_+^2)}^2}\\
    &&+C_{\eps} \inner{ \norm{\partial_y f}_{L^{2}(  [0, T]\times \mathbb R_+^2)}^{2}+\norm{ f}_{L^{2}(  [0, T]\times \mathbb R_+^2)}^{2} +\norm{
       \Lambda^{-1/3} h }_{L^2(\mathbb
      R_+^3)}^2},
\end{eqnarray*}
with $\eps>0$ arbitrarily small. 
This,  along with 
\begin{eqnarray*}
	\norm{\comii y^{-\sigma/2}   \Lambda^{1/3}f}_{L^2(   [0, T]\times \mathbb R_+^2)}^2 &\leq& C\norm{\inner {\partial_yu }^{1/2}  \Lambda^{1/3}f}_{L^2(   [0, T]\times \mathbb R_+^2)}^2\\
	&\leq& C\norm{\comii y^{-\sigma/2} \partial_x   \Lambda^{-2/3}f}_{L^2(   [0, T]\times \mathbb R_+^2)}^2+C\norm{f}_{L^2(   [0, T]\times \mathbb R_+^2)}^2
\end{eqnarray*}
due to \reff{1.3},
 implies, for any $\eps>0,$
\begin{eqnarray*}
	&&\norm{\comii y^{-\sigma/2}   \Lambda^{1/3}f}_{L^2(   [0, T]\times \mathbb R_+^2)}^2+\norm{\partial_y^2 \Lambda^{-1/3}f}_{L^2(   [0, T]\times \mathbb R_+^2)}^2 \\ 
		& \leq &\eps  \norm{\Lambda^{-2/3}\partial_y  h}_{L^2\inner{[0,T]\times\mathbb R_+^2}}^2\\
		&&+C_{\eps}  \inner{  \norm{\comii y^{-\sigma/2}   \partial_y   \Lambda^{1/6} f}_{L^2([0,T]\times\mathbb R_+^2)}^2+\norm{\comii y^\sigma\Lambda^{-1/3} \partial_yg }_{L^2([0,T]\times\mathbb R_+^2)}^2}\\
    &&+C_{\eps} \inner{ \norm{\partial_y f}_{L^{2}(  [0, T]\times \mathbb R_+^2)}^{2}+\norm{ f}_{L^{2}(  [0, T]\times \mathbb R_+^2)}^{2} +\norm{
       \Lambda^{-1/3} h }_{L^2(\mathbb
      R_+^3)}^2}.
\end{eqnarray*}
This is just the first  estimate in Proposition \ref{subtang}.   And the second estimate follows from   \reff{lam3} since $\abs{\partial_yu}$ is bounded from above by $\comii y^{-\sigma}$.    Thus the proof of  Proposition \ref{subtang} is complete. 
\end{proof}

\section{Property of inducative weight functions}\label{section4}

This section is devoted to  proving the Lemma \ref{lemma2.1},  Lemma \ref{le+2.2+} and Lemma \ref{lemma261},  used in Section \ref{section2}.

Recall, for $m\geq N_0+1$ and $0\leq \ell\leq 3, y>0, 0\le t\le T<1,$
\begin{eqnarray*}
W_m^\ell=e^{2cy}\inner{1+\frac{2cy}{(3m+\ell)\sigma} }^{-\frac{(3m+\ell)\sigma}{2}}
(1+cy)^{-1} \Lambda^{{\ell\over 3}},\qquad
\phi_m^\ell=\phi^{3(m-N_0-1)+\ell}.
\end{eqnarray*}
thus
\begin{eqnarray}\label{phi}
	\phi_{m_1}^{\ell_1}\leq \phi_{m_2}^{\ell_2}
\end{eqnarray}
provided $N_0+1\leq m_2\leq m_1$ and $0\leq \ell_2\leq \ell_1\leq 3$.
 
 Next we list    some inequalities for the weight $W_m^\ell$.  Observe  the function 
\begin{eqnarray*}
\gamma\longrightarrow \inner{1+\frac{cy}{\gamma}}^{-\gamma}
\end{eqnarray*}
is a monotonically  decreasing function as $\gamma$ varies in the interval $[1,+\infty[$ for $y\geq 0$.  Thus 
\begin{eqnarray}\label{wmi}
 0\le \ell\le 3, \quad \norm{W_{m_1} ^\ell f }_{L^2(\mathbb R_x)}\leq \norm{W_{m_2} ^\ell f }_{L^2(\mathbb R_x)}
\end{eqnarray}
and 
\begin{eqnarray}\label{+wmi}
 \forall~0\leq \ell\leq i\leq 3,   \quad \norm{W_{m_1} ^{i}f }_{L^2(\mathbb R_x)}\leq \norm{W_{m_2}^{i-\ell}\Lambda^{\ell/3}f }_{L^2(\mathbb R_x)}\leq \norm{W_{m_3}^{i} f }_{L^2(\mathbb R_x)},
\end{eqnarray}
provided that $m_1\geq m_2\geq 1,$ and that $3m_2+i-\ell \geq 3m_3+i.$   Moreover, since
\begin{eqnarray*}
	\forall~0\leq \alpha\leq 3, ~\forall~\gamma\geq 1,\quad \abs{\partial_y^\alpha e^{2cy} \inner{1+\frac{cy}{\gamma}}^{-\gamma}(1+cy)^{-1}}\leq   C_\alpha   e^{2cy} \inner{1+\frac{cy}{\gamma}}^{-\gamma}(1+cy)^{-1},
\end{eqnarray*}
with $  C_\alpha  $ a constant independent of  $\gamma,$ then the following estimates: 
\begin{eqnarray}\label{comfm}
	 \norm{\com{\partial_y,~ W_{m} ^{i}} f }_{L^2(\mathbb R_+^2)}\leq C\norm{W_{m} ^{i}f }_{L^2(\mathbb R_+^2)},
\end{eqnarray}
\begin{eqnarray}\label{comfm+}
\begin{split}
	 \norm{\com{\partial_y^2,~ W_{m} ^{i}} f }_{L^2(\mathbb R_+^2)} &\leq C\inner{\norm{W_{m} ^{i}f }_{L^2(\mathbb R_+^2)}+\norm{W_{m} ^{i} \partial_y  f }_{L^2(\mathbb R_+^2)}}\\
	 & \leq \tilde C\inner{\norm{W_{m} ^{i}f }_{L^2(\mathbb R_+^2)}+\norm{\partial_y W_{m} ^{i} f }_{L^2(\mathbb R_+^2)}}
\end{split}
\end{eqnarray} 
\begin{eqnarray}\label{+comfm++}
\begin{split}
	 \norm{\com{\partial_y^3,~ W_{m} ^{i}} f }_{L^2(\mathbb R_+^2)} & \leq C\inner{\norm{W_{m} ^{i}f }_{L^2(\mathbb R_+^2)}+\norm{W_{m} ^{i} \partial_y  f }_{L^2(\mathbb R_+^2)}+\norm{W_{m} ^{i} \partial_y^2  f }_{L^2(\mathbb R_+^2)}}\\
	 & \leq \tilde C\inner{\norm{W_{m} ^{i}f }_{L^2(\mathbb R_+^2)}+\norm{\partial_y W_{m} ^{i}   f }_{L^2(\mathbb R_+^2)}+\norm{\partial_y^2W_{m} ^{i}   f }_{L^2(\mathbb R_+^2)}}
	 \end{split}
\end{eqnarray} 
hold for all integers $m, i$ with $m\geq 1$ and   $0\leq i\leq 3$, where $C, \tilde C$ are two constants independent of $m.$

\begin{lemma}\label{uf} 
Under the assumption \eqref{1.3} and \eqref{1.5}.   Let $c$ be the constant given in  \reff{wmi+++++},  and  $\Lambda^{\tau_1}, \Lambda_\delta^{\tau_2}$  be the Fourier multiplier associate with the symbols $\comii{\xi }^{\tau_1}$ and $\comii{\delta\xi }^{\tau_2}$, respectively.     Then there exists a  constant $C$,   such that  for any $m, n \geq 1, 0\leq \ell\leq 3,$ and for any $0<\tilde c<c$,  we have
 \begin{eqnarray}\label{equ1}
 \norm{ e^{\tilde cy} \Lambda^{\tau_1}\Lambda_\delta^{\tau_2} \p^{m} u}_{L^2(\RR^2_+)} \leq C\big\| \Lambda^{\tau_1}\Lambda_\delta^{\tau_2} W_{n}^\ell  f_m\big\|_{L^2(\RR^2_+)}, 
 \end{eqnarray}
 and 
  \begin{eqnarray}\label{equ2}
 \norm{ \Lambda^{\tau_1}\Lambda_\delta^{\tau_2}   \p^{m} v}_{L^\infty\inner{\mathbb R_+;~L^2(\RR_x)}} \leq C\big\| \Lambda^{\tau_1}\Lambda_\delta^{\tau_2} W_{n}^{\ell}f_{m+1}\big\|_{L^2(\RR^2_+)}.
 \end{eqnarray}
\end{lemma}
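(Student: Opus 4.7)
The plan is to prove \reff{equ1} by a weighted energy estimate on the first-order ODE that $\partial_x^m u$ satisfies in the $y$ variable, and then to deduce \reff{equ2} from the incompressibility condition together with \reff{equ1}.

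For \reff{equ1}, the identity $f_m=\partial_x^m\omega-(\partial_y\omega/\omega)\partial_x^m u$ combined with $\omega=\partial_y u$ shows that $U:=\partial_x^m u$ obeys $\partial_y U-(\partial_y\omega/\omega)U=f_m$ with $U|_{y=0}=0$. I would apply the multiplier $\Lambda^{\tau_1}\Lambda_\delta^{\tau_2}$, which acts only in $x$ and so commutes with $\partial_y$ and with the trace at $y=0$ but not with multiplication by $\partial_y\omega/\omega$, obtaining for $V:=\Lambda^{\tau_1}\Lambda_\delta^{\tau_2}\partial_x^m u$ the equation
\[
\partial_y V-\frac{\partial_y\omega}{\omega}\,V=\Lambda^{\tau_1}\Lambda_\delta^{\tau_2}f_m+\mathcal R,\qquad V\big|_{y=0}=0,
\]
with commutator remainder $\mathcal R:=[\Lambda^{\tau_1}\Lambda_\delta^{\tau_2},\partial_y\omega/\omega]\partial_x^m u$. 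The energy argument then multiplies this ODE by $e^{2\tilde c y}\overline V$ and integrates over $(x,y)\in\RR\times\RR_+$: the real part of the $\partial_y V$ term integrates by parts to $-\tilde c\|e^{\tilde c y}V\|_{L^2}^2$, the boundary at $y=\infty$ vanishing thanks to the exponential decay of $\partial_x^m u$ furnished by hypothesis \reff{1.5}. Using $|\partial_y\omega/\omega|\le C_*^2\langle y\rangle^{-1}$, a consequence of \reff{1.3}, the coefficient $\tilde c+\partial_y\omega/\omega$ is bounded below by $\tilde c/2$ for $y\ge Y_0:=2C_*^2/\tilde c$, which gives coercivity on $[Y_0,\infty)$; the contribution from the bounded strip $[0,Y_0]$ is controlled by $\|f_m\|_{L^2([0,Y_0]\times\RR)}$ via a Poincar\'e-type bound on $U$ in the bounded-weight regime, itself a consequence of the integral representation $\partial_x^m u(y)=\omega(y)\int_0^y f_m/\omega\,dy'$ and the two-sided bound on $\omega$.

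A Cauchy--Schwarz--Young absorption on the forcing pairing yields
\[
\|e^{\tilde c y}V\|_{L^2(\RR_+^2)}\le C\bigl(\|e^{\tilde c y}\Lambda^{\tau_1}\Lambda_\delta^{\tau_2}f_m\|_{L^2(\RR_+^2)}+\|\mathcal R\|_{L^2(\RR_+^2)}\bigr),
\]
with $\|\mathcal R\|_{L^2}$ lower order in $x$ and controlled by a pseudo-differential commutator estimate in the spirit of Lemma \ref{lemma3.1}, using that $\partial_y\omega/\omega$ is smooth and bounded in $x$ by \reff{1.5}. The weight comparison $W_n^0\ge c_0 e^{\tilde c y}$ from \reff{2.3} then lets me replace $e^{\tilde c y}f_m$ on the right by $W_n^0 f_m$, and \reff{+wmi} absorbs the factor $\Lambda^{\ell/3}$ present in $W_n^\ell$ when $\ell>0$; this delivers \reff{equ1}. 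For \reff{equ2}, incompressibility and $v|_{y=0}=0$ give $\partial_x^m v(t,x,y)=-\int_0^y\partial_x^{m+1}u(t,x,y')\,dy'$, and since $\Lambda^{\tau_1}\Lambda_\delta^{\tau_2}$ commutes with the $y'$-integral, Minkowski together with Cauchy--Schwarz and the weight split $1=e^{\tilde c y'}\cdot e^{-\tilde c y'}$ yields, uniformly in $y\ge0$,
\[
\|\Lambda^{\tau_1}\Lambda_\delta^{\tau_2}\partial_x^m v(t,\cdot,y)\|_{L^2(\RR_x)}\le(2\tilde c)^{-1/2}\|e^{\tilde c y'}\Lambda^{\tau_1}\Lambda_\delta^{\tau_2}\partial_x^{m+1}u\|_{L^2(\RR_+^2)};
\]
applying \reff{equ1} with $m+1$ in place of $m$ then gives \reff{equ2}. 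The hard part will be the commutator bookkeeping for $\mathcal R$, and in particular ensuring uniformity of all constants in $\delta\in(0,1)$ and in the multiplier orders $\tau_1,\tau_2$, which requires a pseudo-differential commutator estimate more general than the particular case covered by Lemma \ref{lemma3.1}.
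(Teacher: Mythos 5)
Your reduction of \reff{equ2} to \reff{equ1} and your weight comparisons at the end (using \reff{2.3} and $\Lambda^{\ell/3}\geq 1$) match the paper and are fine. The problem is in your energy argument for \reff{equ1}: because you run the estimate on $U=\partial_x^m u$ itself, the equation $\partial_y U-(\partial_y\omega/\omega)U=f_m$ produces, after applying $P:=\Lambda^{\tau_1}\Lambda_\delta^{\tau_2}$, the remainder $\mathcal R=[P,\partial_y\omega/\omega]\,\partial_x^m u$, and this term involves the \emph{unknown} being estimated, not the data. Lemma \ref{lemma3.1} gives $\|e^{\tilde c y}\mathcal R\|\le C\|e^{\tilde c y}\Lambda^{\tau_1-1}\Lambda_\delta^{\tau_2}\partial_x^m u\|$, which is indeed of lower order in $x$ — but lower order does not permit you to close: the constant $C$ (depending on $C_*$, $C_0$) is not small, so the term cannot be absorbed into $\|e^{\tilde c y}V\|$, and iterating the same estimate with the multiplier $\Lambda^{-j}P$ produces a geometric factor $C^j$ that diverges, while the terminal term $\|e^{\tilde c y}\Lambda^{\tau_1-k}\Lambda_\delta^{\tau_2}\partial_x^m u\|$ is never reduced to $f_m$. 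You do not supply any mechanism — smallness, interpolation with a controlled endpoint, or a reduction to $f_m$ — to dispose of $\mathcal R$, and this is a genuine gap, not bookkeeping.

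The paper's proof avoids this entirely by estimating the quotient $\partial_x^m u/\omega$ rather than $\partial_x^m u$: since $\partial_y(\partial_x^m u/\omega)=f_m/\omega$ with no zeroth-order term and $(\partial_x^m u/\omega)|_{y=0}=0$, the weighted Hardy inequality $\|e^{\tilde c y}G\|\le \tilde c^{-1}\|e^{\tilde c y}\partial_y G\|$ applies directly to $G=P(\partial_x^m u/\omega)$, and the only commutator that then appears is $[\,e^{\tilde c y}\omega^{-1},P\,]f_m$, which acts on the data $f_m$ and is harmless. This same device also removes your second difficulty, the loss of coercivity of $\tilde c+\partial_y\omega/\omega$ on the strip $0\le y\le Y_0$ (where $|\partial_y\omega/\omega|$ can reach $C_*^2\gg\tilde c$); you acknowledge that issue and your bounded-strip fix via the representation $\partial_x^m u=\omega\int_0^y f_m/\omega$ could be made to work, but once you introduce the quotient there you may as well use it globally, which is exactly the paper's argument. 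I recommend you restructure the proof around $\partial_x^m u/\omega$.
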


\begin{proof} In the proof we use $C$ to denote different constants which are independent of $m$. 
 Observe     $\omega\in L^\infty$ and $\omega>0$ then   
\begin{eqnarray*}
	\norm{ e^{\tilde cy } \Lambda^{\tau_1}\Lambda_\delta^{\tau_2} \p^{m} u}_{L^2(\RR^2_+)} \leq C\big\|e^{\tilde cy } \Lambda^{\tau_1}\Lambda_\delta^{\tau_2} \frac{\p^{m} u}{\omega}\big\|_{L^2(\RR^2_+)}
\end{eqnarray*}
On the other hand,  integrating by parts we have
\begin{eqnarray*}
	\big\|e^{\tilde cy }  \Lambda^{\tau_1}\Lambda_\delta^{\tau_2}  \frac{\p^{m} u}{\omega}\big\|_{L^2(\RR^2_+)}^2&=&\int_\mathbb R \int_0^\infty e^{2\tilde cy }\inner{ \Lambda^{\tau_1}\Lambda_\delta^{\tau_2}  \frac{\p^{m} u}{\omega}}\overline{ \Lambda^{\tau_1}\Lambda_\delta^{\tau_2}  \frac{\p^{m} u}{\omega}}  dydx\\
	&=&\frac{1}{2\tilde c}\int_\mathbb R \int_0^\infty \inner{\partial_y e^{2\tilde cy }} \inner{ \Lambda^{\tau_1}\Lambda_\delta^{\tau_2}  \frac{\p^{m} u}{\omega}}\overline{ \Lambda^{\tau_1}\Lambda_\delta^{\tau_2}  \frac{\p^{m} u}{\omega}}  dydx\\
	&=&-\frac{1}{2\tilde c}\int_\mathbb R \int_0^\infty   e^{2\tilde cy} \bigg[\partial_y \inner{ \Lambda^{\tau_1}\Lambda_\delta^{\tau_2}  \frac{\p^{m} u}{\omega}}\bigg]\overline{ \Lambda^{\tau_1}\Lambda_\delta^{\tau_2}  \frac{\p^{m} u}{\omega}}  dydx\\
	&&-\frac{1}{2\tilde c}\int_\mathbb R \int_0^\infty   e^{2\tilde cy}  \inner{ \Lambda^{\tau_1}\Lambda_\delta^{\tau_2}  \frac{\p^{m} u}{\omega}} \overline{\partial_y \Lambda^{\tau_1}\Lambda_\delta^{\tau_2}  \frac{\p^{m} u}{\omega}}  dydx\\
	&\leq &\frac{1}{\tilde c} \norm {e^{\tilde cy }  \Lambda^{\tau_1}\Lambda_\delta^{\tau_2}  \frac{\p^{m} u}{\omega}}_{L^2(\RR^2_+)}
 \norm{ e^{\tilde cy }  \Lambda^{\tau_1}\Lambda_\delta^{\tau_2}\partial_y\inner{\frac{\p^{m} u}{\omega}} }_{L^2(\RR^2_+)},  
\end{eqnarray*}
which implies
\begin{eqnarray*}
	\big\|e^{\tilde cy } \Lambda^{\tau_1}\Lambda_\delta^{\tau_2}  \frac{\p^{m} u}{\omega}\big\|_{L^2(\RR^2_+)}&\leq & 
 \norm{ e^{\tilde cy }   \Lambda^{\tau_1}\Lambda_\delta^{\tau_2}  \partial_y\inner{\frac{\p^{m} u}{\omega}} }_{L^2(\RR^2_+)}\\
& = &   
 \norm{ \Lambda^{\tau_1}\Lambda_\delta^{\tau_2}   e^{\tilde cy }  \omega^{-1}\omega \partial_y\inner{\frac{\p^{m} u}{\omega}} }_{L^2(\RR^2_+)}\\
& \leq  &  
 \norm{ e^{\tilde cy }  \omega^{-1}  \Lambda^{\tau_1}\Lambda_\delta^{\tau_2}  \omega \partial_y\inner{\frac{\p^{m} u}{\omega}} }_{L^2(\RR^2_+)}+\norm{ \com{e^{\tilde cy }  \omega^{-1}, ~ \Lambda^{\tau_1}\Lambda_\delta^{\tau_2} } \omega \partial_y\inner{\frac{\p^{m} u}{\omega}} }_{L^2(\RR^2_+)}.
\end{eqnarray*}
Thus we have, by the above inequalities, 
\begin{eqnarray*}
	\norm{e^{\tilde cy }  \Lambda^{\tau_1}\Lambda_\delta^{\tau_2} \p^{m} u}_{L^2(\RR^2_+)} \leq C\norm{ e^{\tilde cy }  \omega^{-1}  \Lambda^{\tau_1}\Lambda_\delta^{\tau_2}  \omega \partial_y\inner{\frac{\p^{m} u}{\omega}} }_{L^2(\RR^2_+)}+C\norm{ \com{e^{\tilde cy }  \omega^{-1}, ~ \Lambda^{\tau_1}\Lambda_\delta^{\tau_2} } \omega \partial_y\inner{\frac{\p^{m} u}{\omega}} }_{L^2(\RR^2_+)}.
\end{eqnarray*}
On the other hand,  \eqref{1.3} and \eqref{1.5}  enables us to use Lemma \ref{lemma3.1}  to obtain
\begin{eqnarray*}
	\norm{ \com{e^{\tilde cy }  \omega^{-1}, ~ \Lambda^{\tau_1}\Lambda_\delta^{\tau_2} } \omega \partial_y\inner{\frac{\p^{m} u}{\omega}} }_{L^2(\RR^2_+)}&\leq& C \norm{  \Lambda^{\tau_1}\Lambda_\delta^{\tau_2}  \omega \partial_y\inner{\frac{\p^{m} u}{\omega}} }_{L^2(\RR^2_+)}\\
	&\leq& C \norm{ e^{\tilde cy }  \omega^{-1}  \Lambda^{\tau_1}\Lambda_\delta^{\tau_2}  \omega \partial_y\inner{\frac{\p^{m} u}{\omega}} }_{L^2(\RR^2_+)}.
\end{eqnarray*}
As a result, 
\begin{eqnarray*}
	\norm{ e^{\tilde cy } \Lambda^{\tau_1}\Lambda_\delta^{\tau_2}  \p^{m} u}_{L^2(\RR^2_+)}  \leq   C
 \norm{ e^{\tilde cy }  \omega^{-1}  \Lambda^{\tau_1}\Lambda_\delta^{\tau_2}  \omega \partial_y\inner{\frac{\p^{m} u}{\omega}} }_{L^2(\RR^2_+)}\leq   C
 \norm{    \Lambda^{\tau_1}\Lambda_\delta^{\tau_2}  W_n^\ell f_m }_{L^2(\RR^2_+)},
\end{eqnarray*}
the last inequality using the fact that $f_m=\omega \partial_y\inner{\frac{\p^{m} u}{\omega}}$,  and that 
\begin{eqnarray*}
	 e^{\tilde cy }  \omega^{-1}\leq C e^{\tilde cy } (1+y)^{\sigma}\leq C e^{2cy}\inner{1+\frac{2cy}{\gamma}}^{-\gamma/2}
\end{eqnarray*}
for any $\gamma\geq 1.$ 
This is just the desired \reff{equ1}.  Now  we prove \reff{equ2}. 
Recall $v(t, x,y)=-\int_0^y \p u(t, x,y')dy'$. Then   we have
\begin{eqnarray*}
\Lambda^{\tau_1}\Lambda_\delta^{\tau_2} \p^{m}v=-\int_{0}^{y} \Lambda^{\tau_1}\Lambda_\delta^{\tau_2} \p^{m+1} u(x,y')dy'
\end{eqnarray*}
Therefore
\begin{eqnarray*}
\|\Lambda^{\tau_1}\Lambda_\delta^{\tau_2} \p^{m}v\|_{L^\infty\inner{\mathbb R_+;~L^2(\mathbb R_x)}}&\leq &\norm{e^{-\tilde cy} }_{L^2\inner{\mathbb R_+}}\norm{e^{\tilde cy} \Lambda^{\tau_1}\Lambda_\delta^{\tau_2} \partial_x^{m+1} u}_{L^2\inner{\mathbb R_+^2}}\\
&\leq &C \norm{\Lambda^{\tau_1}\Lambda_\delta^{\tau_2}  W^\ell_{n} f_{m+1}}_{L^2\inner{\mathbb R_+^2}},
\end{eqnarray*}
the last inequality using \reff{equ1}. Thus the desired \reff{equ2} follows and the proof of Lemma \ref{uf} is complete. 
\end{proof}

We prove now Lemma \ref{lemma2.1}, recall
$$
f_m=\partial_x^m \omega-\frac{\partial_y \omega}{\omega}\partial_x^m u=\omega\partial_y\inner{\frac{\partial_x^mu}{\omega}}\,. 
$$
\begin{lemma}\label{lemma4.2}
  There exists a  constant $C$,   such that
 \begin{eqnarray}\label{equ1+}
 \norm{  \comii y^{-1}W_m^\ell \p^{m} u}_{L^2(\RR^2_+)}+\norm{  \comii y^{-1}W_m^\ell \p^{m}\omega}_{L^2(\RR^2_+)} \leq C\big\|  W_m^\ell f_m \big\|_{L^2(\RR^2_+)}.
 \end{eqnarray}
 As a result,  for some constant $\tilde C,$
 \begin{eqnarray*}
\big\|  \Lambda^{-1} W^0_{m}  f_{m+1} \big\|_{L^2(\RR^2_+)}\leq \tilde {C} \norm{   W^0_{m} f_{m}}_{L^2(\RR^2_+)},
 \end{eqnarray*}
 and 
  \begin{eqnarray*} 
\big\|  \Lambda^{-1} \partial_y  W^0_{m}  f_{m+1} \big\|_{L^2(\RR^2_+)}\leq \tilde {C}\inner{ \norm{ \partial_y  W^0_{m} f_{m}}_{L^2(\RR^2_+)}+\big\|     W^0_{m}  f_{m} \big\|_{L^2(\RR^2_+)}}. 
 \end{eqnarray*}
\end{lemma}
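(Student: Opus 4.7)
The lemma contains three estimates. The plan is to first prove the main Poincar\'e-type bound \reff{equ1+} for $\p^m u$; once that is known, the companion bound for $\p^m\omega$ is immediate from the algebraic identity $\p^m\omega = f_m + (\partial_y\omega/\omega)\p^m u$ together with the pointwise estimate $|\partial_y\omega/\omega|\leq C_*^2\comii y^{-1}$ coming from hypothesis \reff{1.3}.

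The key step is therefore the bound on $\p^m u$. The starting point is the boundary condition $\p^m u|_{y=0}=0$, which combined with the identity $f_m = \omega\partial_y(\p^m u/\omega)$ yields the integral representation
\begin{equation*}
\frac{\p^m u(x,y)}{\omega(x,y)} = \int_0^y \frac{f_m(x,y')}{\omega(x,y')}\,dy'.
\end{equation*}
Multiplying by $\omega(x,y)$ and then by $\comii y^{-1}W_m^\ell$, I would carry out a weighted $L^2$ estimate in $y$ that combines a Hardy-type inequality (made possible by the $\comii y^{-1}$ factor and the vanishing of $\p^m u$ at $y=0$) with an integration by parts in the spirit of Lemma \ref{uf}. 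The Fourier multiplier $\Lambda^{\ell/3}$ inside $W_m^\ell$ is handled by applying it to the representation and controlling the commutator $[\Lambda^{\ell/3},\omega^{-1}]$ via Lemma \ref{lemma3.1}, at the cost of a lower order term in the $x$-frequency which can be absorbed. The specific form of $W_m^\ell$ in \reff{wmi+++++}, in particular the property $|\partial_y M_m^\ell|\leq C M_m^\ell$ with $C$ independent of $m$ and $\ell$ (writing $W_m^\ell = M_m^\ell(y)\Lambda^{\ell/3}$), is what forces the constants produced to remain $m$-independent --- an essential feature for the induction in Section \ref{section2}.

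For the two consequences involving $f_{m+1}$, direct differentiation of the definition of $f_m$ in the $x$-variable gives the algebraic identity $f_{m+1} = \p f_m + [\p(\partial_y\omega/\omega)]\p^m u$. Since $W_m^0$ depends only on $y$ it commutes with both $\Lambda^{-1}$ and $\p$, so
\begin{equation*}
\Lambda^{-1}W_m^0 f_{m+1} = \Lambda^{-1}\p\inner{W_m^0 f_m} + \Lambda^{-1}W_m^0 \com{\p\inner{\partial_y\omega/\omega}}\p^m u.
\end{equation*}
The first term is bounded by $\norm{W_m^0 f_m}$ since $\Lambda^{-1}\p$ has operator norm at most one on $L^2$. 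For the second term, the pointwise estimate $|\p(\partial_y\omega/\omega)|\leq C\comii y^{-1}$, which follows from \reff{1.5} via Sobolev embedding (valid since $N_0\geq 7$), combined with the already-established Poincar\'e bound on $\comii y^{-1}W_m^0\p^m u$, yields the second inequality. The $\partial_y$-analog is obtained by the same manipulation with an extra $\partial_y$ commuted through; the Leibniz rule and the bound $|\partial_y W_m^0|\leq C W_m^0$ produce precisely the additional $\norm{\partial_y W_m^0 f_m}$ term on the right-hand side of the third inequality.

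The principal obstacle is keeping the constants in the weighted $L^2$ estimate for $\p^m u$ uniform in $m$: a naive Cauchy--Schwarz applied to the integral representation produces constants that blow up super-exponentially in $m$ because of the imbalance between the exponential weight $e^{2cy}$ in $W_m^\ell$ and the polynomial growth of $1/\omega\sim\comii y^{\sigma}$. To avoid this, one must perform an integration by parts much as in the proof of Lemma \ref{uf}, exploiting the fact that $\partial_y M_m^\ell$ is comparable in size to $M_m^\ell$ itself; the tailored Gevrey-adapted cutoff in the definition of $W_m^\ell$ is precisely what makes this comparison uniform in $m$ and $\ell$.
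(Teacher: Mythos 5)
Your proposal is correct and follows essentially the same route as the paper: a weighted Hardy/integration-by-parts estimate in $y$ (in the spirit of Lemma \ref{uf}) for $\comii y^{-1}W_m^\ell \p^m u$, using $\p^m u|_{y=0}=0$, the monotonicity bound $\omega\geq C_*^{-1}\comii y^{-\sigma}$, and Lemma \ref{lemma3.1} for the $\Lambda^{\ell/3}$ commutators, followed by the algebraic identities $\p^m\omega=f_m+(\partial_y\omega/\omega)\p^m u$ and $f_{m+1}=\p f_m+\big[\p\big((\partial_y\omega)/\omega\big)\big]\p^m u$ for the remaining estimates. The one quantitative point you gloss over is how the absorption is made uniform: the paper inserts a large parameter $R=1+2(\sigma+1)c^{-1}$ into the auxiliary weight $(R+y)^{-\sigma-1}$ so that the derivative of the non-exponential part of the weight is at most $\big(c+(\sigma+1)R^{-1}\big)<2c$ times the weight itself, which is a sharper statement than the bare comparability $|\partial_y M_m^\ell|\leq CM_m^\ell$ you invoke, but this is a refinement within the same strategy.
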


\begin{proof}
In the proof we use $C$ to denote different constants  which  depend  only on $\sigma,$ $c$, and $ C_*$  and are independent of $m$.  We first prove \reff{equ1+}.  Observe 
\begin{eqnarray*}
	\omega \comii y^{-1} \inner{1+\frac{2cy}{(3m+\ell)\sigma}}^{-(3m+\ell)\sigma/2} (1+cy)^{-1} &\leq& C (1+y)^{-\sigma-1}\inner{1+\frac{2cy}{(3m+\ell)\sigma}}^{-(3m+\ell)\sigma/2}\\
	&\leq& C R^{\sigma+1} (R+y)^{-\sigma-1}\inner{1+\frac{2cy}{(3m+\ell)\sigma}}^{-(3m+\ell)\sigma/2},
\end{eqnarray*}
where
  $R\geq1$ is a large number to be determined later. Thus using the notation 
  \begin{eqnarray*}
	b_{m,\ell}^R(y)=\inner{1+\frac{2cy}{(3m+\ell)\sigma}}^{-(3m+\ell)\sigma/2}(R+y)^{-\sigma-1},
\end{eqnarray*}
we have
\begin{eqnarray*}
\norm{\comii y^{-1}W_m^\ell \p^{m} u}_{L^2(\RR^2_+)} &=&\norm{\comii y^{-1}W_m^\ell(\omega \frac{\p^{m} u}{\omega})}_{L^2(\RR^2_+)}\\
&\leq&\norm{\omega\comii y^{-1}W^{\ell}_{m}\frac{\p^m u}{\omega}}_{L^2(\RR^2_+)}
+\norm{\comii y^{-1}\com{W^{\ell}_m,~\omega}\frac{\p^m u}{\omega}}_{L^2(\RR^2_+)}\\
&\leq& CR^{\sigma+1}\norm{ e^{2cy} b_{m,\ell}^R \frac{\Lambda^{\ell/3}\p^{m} u}{\omega}}_{L^2(\RR^2_+)}
+\norm{\comii y^{-1}\com{W^{\ell}_m,~\omega}\frac{\p^m u}{\omega}}_{L^2(\RR^2_+)}.
\end{eqnarray*}
  On the other hand,  using Lemma \ref{lemma3.1} 
\begin{eqnarray*}
\norm{\comii y^{-1}\com{W^{\ell}_m,~\omega}\frac{\p^m u}{\omega}}_{L^2(\RR^2_+)}\leq R\|\com{\Lambda^{\frac{l}{3}},~\omega}e^{2cy}b^{R}_{m,\ell}\frac{\p^m u}{\omega}\|_{L^2(\RR^2_+)}
\leq CR \|e^{2cy}b^{R}_{m,\ell}\frac{\p^m u}{\omega}\|_{L^2(\RR^2_+)}
\end{eqnarray*}
Combining these inequalities we conclude
		\begin{eqnarray}\label{lamd+}
		\norm{\comii y^{-1}W_m^\ell \p^{m} u}_{L^2(\RR^2_+)} \leq  CR^{\sigma+1} \norm{ e^{2cy} b_{m,\ell}^R \Lambda^{\ell/3} \frac{\p^{m} u}{\omega}}_{L^2(\RR^2_+)}.
	\end{eqnarray}
Moreover, observe $u\big|_{y=0}=0$ and thus we have, by integrating by parts, 
\begin{eqnarray*}
	 \norm{e^{2cy} b_{m,\ell}^R \Lambda^{\ell/3} \frac{ \p^{m} u}{\omega}}_{L^2(\RR^2_+)}^2&=&\int_\mathbb R \int_0^\infty e^{4cy}\inner{b_{m,\ell}^R(y)}^2\inner{ \Lambda^{\ell/3} \frac{ \p^{m} u}{\omega}}  \overline{ \Lambda^{\ell/3}  \frac{  \p^{m} u}{\omega}}  dydx\\
	&=&\frac{1}{4c}\int_\mathbb R \int_0^\infty \inner{\partial_y e^{4cy}}\inner{b_{m,\ell}^R(y)}^2\inner{\Lambda^{\ell/3}\frac{ \p^{m} u}{\omega}}  \overline{ \Lambda^{\ell/3}\frac{ \p^{m} u}{\omega}}  dydx\\
	&=&-\frac{1}{2c}\int_\mathbb R \int_0^\infty    e^{4cy} b_{m,\ell}^R(y) \inner{\partial_y b_{m,\ell}^R(y)} \inner{ \Lambda^{\ell/3} \frac{  \p^{m} u}{\omega}}  \overline{ \Lambda^{\ell/3} \frac{  \p^{m} u}{\omega}}    dydx\\
	&&-\frac{1}{4c}\int_\mathbb R \int_0^\infty    e^{4cy}  \inner{  b_{m,\ell}^R(y)}^2 \bigg[\partial_y\inner{\Lambda^{\ell/3} \frac{  \p^{m} u}{\omega}}\bigg]  \overline{ \Lambda^{\ell/3}\frac{ \p^{m} u}{\omega}}    dydx\\
	&&-\frac{1}{4c}\int_\mathbb R \int_0^\infty    e^{4cy}  \inner{  b_{m,\ell}^R(y)}^2  \inner{\Lambda^{\ell/3} \frac{  \p^{m} u}{\omega}}  \overline{\partial_y\Lambda^{\ell/3} \inner{ \frac{  \p^{m} u}{\omega}}}    dydx,  
\end{eqnarray*}
which, along with the  estimate
\begin{eqnarray*} 
\abs{\partial_y b^R_{m,\ell}}\leq \inner{c+(\sigma+1)R^{-1}} b_{m,\ell}^R,
\end{eqnarray*}
gives
\begin{eqnarray*}
	 \norm{e^{2cy} b_{m,\ell}^R \Lambda^{\ell/3} \frac{ \p^{m} u}{\omega}}_{L^2(\RR^2_+)}^2	&\leq & \frac{c+(\sigma+1)R^{-1}}{2c} \norm{e^{2cy} b_{m,\ell}^R\Lambda^{\ell/3} \frac{   \p^{m} u}{\omega}}_{L^2(\RR^2_+)}^2  \\
		&&+\frac{1}{2c} \norm{e^{2cy} b_{m,\ell}^R\Lambda^{\ell/3} \frac{  \p^{m} u}{\omega}}_{L^2(\RR^2_+)}\norm{e^{2cy} b_{m,\ell}^R\partial_y\inner{\Lambda^{\ell/3} \frac{  \p^{m} u}{\omega}}}_{L^2(\RR^2_+)}.
\end{eqnarray*}
Now we choose $R=1+2(\sigma+1)c^{-1}$, which gives $R\geq 1$ and 
\begin{eqnarray*}
	(\sigma+1)R^{-1}\leq \frac{c}{2}.
\end{eqnarray*}
Then we deduce, from the above inequalities, 
 \begin{eqnarray*}
	 \norm{e^{2cy} b_{m,\ell}^R\Lambda^{\ell/3} \frac{   \p^{m} u}{\omega}}_{L^2(\RR^2_+)} 
	\leq \frac{2}{c}\norm{e^{2cy} b_{m,\ell}^R\partial_y\Lambda^{\ell/3} \inner{\frac{  \p^{m} u}{\omega}}}_{L^2(\RR^2_+)}.
	\end{eqnarray*}
Moreover, observe $R\geq c^{-1}+1$ and the monotonicity  assumption  $\omega\geq C_*^{-1}(1+y)^{-\sigma}$,   and thus 
\begin{eqnarray*}
	  b_{m,\ell}^R& \leq& c(1+y)^{-\sigma}(1+cy)^{-1}\inner{1+\frac{2cy}{(3m+\ell)\sigma}}^{-(3m+\ell)\sigma/2}\\
	&\leq & c\,C_* \omega(1+cy)^{-1}\inner{1+\frac{2cy}{(3m+\ell)\sigma}}^{-(3m+\ell)\sigma/2}. 
\end{eqnarray*}
As a result,  we obtain 
 \begin{eqnarray*}
	 \norm{e^{2cy} b_{m,\ell}^R\Lambda^{\ell/3} \frac{   \p^{m} u}{\omega}}_{L^2(\RR^2_+)} 
	&\leq & C_* \norm{\omega W^\ell_m   \partial_y \inner{\frac{  \p^{m} u}{\omega}}}_{L^2(\RR^2_+)},  
\end{eqnarray*}
which along with  \reff{lamd+} gives
\begin{eqnarray*}
	&&\norm{\comii y^{-1}W_m^\ell \p^{m} u}_{L^2(\RR^2_+)} \\
	&\leq &C\norm{\omega W^\ell_m   \partial_y \inner{\frac{  \p^{m} u}{\omega}}}_{L^2(\RR^2_+)}\\
	&\leq &C\norm{ W^\ell_m  \omega  \partial_y \inner{\frac{  \p^{m} u}{\omega}}}_{L^2(\RR^2_+)}+C\norm{\com{\omega, ~W^\ell_m}   \partial_y \inner{\frac{  \p^{m} u}{\omega}}}_{L^2(\RR^2_+)}.
\end{eqnarray*}
Using the notation $\rho_{m,\ell}(y)=e^{2cy}\inner{1+\frac{2cy}{(3m+\ell)\sigma}}^{-(3m+\ell)\sigma/2}(1+cy)^{-1}$,
\begin{eqnarray*}
	\norm{\com{\omega, ~W^\ell_m}   \partial_y \inner{\frac{  \p^{m} u}{\omega}}}_{L^2(\RR^2_+)} &=& \norm{\com{\omega , ~\Lambda^{\ell/3}} \rho_{m,\ell}(y) \partial_y \inner{\frac{  \p^{m} u}{\omega}}}_{L^2(\RR^2_+)}\\
	&=& \norm{\com{\omega \comii y^\sigma, ~\Lambda^{\ell/3}}\comii y^{-\sigma} \rho_{m,\ell}(y) \partial_y \inner{\frac{  \p^{m} u}{\omega}}}_{L^2(\RR^2_+)}\\
		&\le & C \norm{ \comii y^{-\sigma}\rho_{m,\ell}(y) \partial_y \inner{\frac{  \p^{m} u}{\omega}}}_{L^2(\RR^2_+)}\\
	&\leq & C \norm{ \rho_{m,\ell}(y) \omega \partial_y \inner{\frac{  \p^{m} u}{\omega}}}_{L^2(\RR^2_+)}\\
	&\leq & C \norm{  W_m^\ell  \omega \partial_y \inner{\frac{  \p^{m} u}{\omega}}}_{L^2(\RR^2_+)}.
\end{eqnarray*}
Then, combining these inequalities we conclude, 
\begin{eqnarray*}
	 \norm{\comii y^{-1}W_m^\ell \p^{m} u}_{L^2(\RR^2_+)}  
	\leq C\norm{ W^\ell_m  \omega  \partial_y \inner{\frac{  \p^{m} u}{\omega}}}_{L^2(\RR^2_+)}=C\norm{ W^\ell_m  f_m}_{L^2(\RR^2_+)}.
	\end{eqnarray*}
	For the other terms in \reff{equ1+}, we have 
	\begin{eqnarray*}
	\begin{split}
	& \norm{\comii y^{-1}W_m^\ell \p^{m} \omega}_{L^2(\RR^2_+)} \\
	\leq&  ~\norm{\comii y^{-1}W_m^\ell f_m}_{L^2(\RR^2_+)}+ \norm{\comii y^{-1}W_m^\ell \big((\partial_y\omega)/\omega\big)\p^{m} u}_{L^2(\RR^2_+)}\\   
	\leq&  ~\norm{\comii y^{-1}W_m^\ell f_m}_{L^2(\RR^2_+)}+ \norm{\big((\partial_y\omega)/\omega\big) \comii y^{-1}W_m^\ell \p^{m} u}_{L^2(\RR^2_+)}\\   
	 &+ \norm{\com{(\partial_y\omega)/\omega,~W_m^\ell }\comii y^{-1}\p^{m} u}_{L^2(\RR^2_+)}\\
	  \leq& ~ \norm{\comii y^{-1}W_m^\ell f_m}_{L^2(\RR^2_+)}+ C\norm{  \comii y^{-1}W_m^\ell \p^{m} u}_{L^2(\RR^2_+)}\\
	\leq &   ~C\norm{ W^\ell_m  f_m}_{L^2(\RR^2_+)},
	\end{split}
	\end{eqnarray*}
	Thus the desired estimate \reff{equ1+} follows.   As a result,   we have
	\begin{eqnarray*}
		\norm{\Lambda^{-1}    W^0_{m}  f_{m+1}}_{L^2(\RR^2_+)}&\leq  &  \norm{\Lambda^{-1}   W^0_{m}    \partial_x f_{m}}_{L^2(\RR^2_+)}+   \norm{\Lambda^{-1}   W^0_{m}   \Big[ \partial_x\big((\partial_y\omega)/\omega\big) \Big]\partial_x^m u}_{L^2(\RR^2_+)}\\
		&\leq  &  \norm{    W^0_{m}   f_{m}}_{L^2(\RR^2_+)}+   \norm{\comii y ^{-1}   W^0_{m} \partial_x^m u}_{L^2(\RR^2_+)}\leq C\norm{    W^0_{m}   f_{m}}_{L^2(\RR^2_+)}. 
	\end{eqnarray*}
	Similarly,  we can deduce that,  using \reff{comfm}, 
	 \begin{eqnarray*}
\big\|  \Lambda^{-1} \partial_y  W^0_{m}  f_{m+1} \big\|_{L^2(\RR^2_+)}\leq C \inner{ \norm{ \partial_y  W^0_{m} f_{m}}_{L^2(\RR^2_+)}+\big\|     W^0_{m}  f_{m} \big\|_{L^2(\RR^2_+)}}. 
 \end{eqnarray*}
	Thus the proof of Lemma \ref{lemma4.2} is   complete. 
 \end{proof}

We prove now the Lemma \ref{le+2.2+} by the following 2 lemmas . 
 
\begin{lemma}\label{equ+}
There exists a constant $C$ such that, for any  $m\geq 1$  and $1\leq \ell \leq 3$,     
	\begin{eqnarray*}
		 \norm{\comii y^{-\sigma/2} \partial_y\Lambda^{1/3}\Lambda^{-2}_\delta W_m^{\ell-1}f_m}_{L^2(\mathbb R_+^2)}\leq  C\norm{  \partial_y \Lambda^{-2}_\delta W_m^{\ell}f_m}_{L^2(\mathbb R_+^2)}
+C \norm{  \Lambda^{-2}_\delta W_m^{\ell}f_m}_{L^2(\mathbb R_+^2)}.
	\end{eqnarray*}
\end{lemma}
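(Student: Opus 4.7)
The plan is to reduce everything to a pointwise comparison between the two scalar weights
\[
\rho_{m,\ell}(y) := e^{2cy}\Bigl(1+\tfrac{2cy}{(3m+\ell)\sigma}\Bigr)^{-\frac{(3m+\ell)\sigma}{2}}(1+cy)^{-1},
\]
so that $W_m^{\ell}=\rho_{m,\ell}(y)\,\Lambda^{\ell/3}$. Since $\Lambda^{1/3}$ and $\Lambda_\delta^{-2}$ act only in $x$ and commute with any function of $y$, we can rewrite
\[
\partial_y\Lambda^{1/3}\Lambda_\delta^{-2}W_m^{\ell-1}f_m
=\partial_y\!\bigl(\rho_{m,\ell-1}(y)\,\Lambda_\delta^{-2}\Lambda^{\ell/3}f_m\bigr),
\]
and then a Leibniz expansion will split the problem into a ``weight-derivative'' term and a ``derivative in the interior'' term. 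The whole thing will reduce to the scalar inequality
\begin{equation}\label{ratiocomp}
\comii{y}^{-\sigma/2}\rho_{m,\ell-1}(y)\le C\,\rho_{m,\ell}(y),\qquad y\ge0,
\end{equation}
uniformly in $m\ge1$ and $1\le\ell\le 3$, with $C$ depending only on $\sigma$ and $c$.

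The main step is to establish \eqref{ratiocomp}; this is where the index $\sigma/2$ appears and is the only non-routine calculation. Set $\gamma_\ell=(3m+\ell)\sigma/2$ and $g(\gamma)=-\gamma\log(1+cy/\gamma)$, so that
$\rho_{m,\ell-1}(y)/\rho_{m,\ell}(y)=\exp\bigl(g(\gamma_{\ell-1})-g(\gamma_\ell)\bigr)$.
A direct computation gives
$g'(\gamma)=-\log(1+cy/\gamma)+\tfrac{cy/\gamma}{1+cy/\gamma}$,
which is $O((cy/\gamma)^2)$ for $cy/\gamma$ small and behaves like $-\log(cy/\gamma)+1$ for $cy/\gamma$ large. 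Since $\gamma_\ell-\gamma_{\ell-1}=\sigma/2$ and $\gamma_\ell\ge 3\sigma/2$, integrating from $\gamma_{\ell-1}$ to $\gamma_\ell$ yields
$\rho_{m,\ell-1}(y)/\rho_{m,\ell}(y)\le C\bigl(1+(cy/\gamma_\ell)\bigr)^{\sigma/2}\le C\,\comii{y}^{\sigma/2}$
uniformly in $m\ge1$ and $1\le\ell\le 3$, which is exactly \eqref{ratiocomp}. The same computation shows $|\partial_y\rho_{m,\ell-1}|\le C\rho_{m,\ell-1}$ and $|\partial_y\rho_{m,\ell}|\le C\rho_{m,\ell}$ with $C$ independent of $m,\ell$.

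Once \eqref{ratiocomp} is in hand, the rest is bookkeeping. Writing $g_m:=\Lambda_\delta^{-2}\Lambda^{\ell/3}f_m$, Leibniz gives
\[
\partial_y\Lambda^{1/3}\Lambda_\delta^{-2}W_m^{\ell-1}f_m
=(\partial_y\rho_{m,\ell-1})\,g_m+\rho_{m,\ell-1}\,\partial_y g_m.
\]
Multiplying by $\comii{y}^{-\sigma/2}$, taking $L^2$-norms and using $|\partial_y\rho_{m,\ell-1}|\le C\rho_{m,\ell-1}$ together with \eqref{ratiocomp}, both terms on the right are dominated by $\rho_{m,\ell}$ times the appropriate object:
\[
\bigl\|\comii{y}^{-\sigma/2}\partial_y\Lambda^{1/3}\Lambda_\delta^{-2}W_m^{\ell-1}f_m\bigr\|_{L^2}
\le C\|\rho_{m,\ell}g_m\|_{L^2}+C\|\rho_{m,\ell}\partial_y g_m\|_{L^2}.
\]
The first term equals $C\|\Lambda_\delta^{-2}W_m^\ell f_m\|_{L^2}$. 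For the second, Leibniz again gives
\[
\rho_{m,\ell}\partial_y g_m=\partial_y(\rho_{m,\ell}g_m)-(\partial_y\rho_{m,\ell})g_m=\partial_y\Lambda_\delta^{-2}W_m^\ell f_m-(\partial_y\rho_{m,\ell})g_m,
\]
and the error term is again absorbed using $|\partial_y\rho_{m,\ell}|\le C\rho_{m,\ell}$. Collecting everything yields the claimed inequality.

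The hard part is purely the pointwise weight comparison \eqref{ratiocomp}; once that is established, the argument above is mechanical. Note that the index $\sigma/2$ on the left-hand side is forced by the shift from $\gamma_{\ell-1}$ to $\gamma_\ell$ being exactly $\sigma/2$, which is consistent with the polynomial decay rate of $\partial_y u$ imposed by \eqref{1.3}.
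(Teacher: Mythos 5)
Your proposal is correct and follows essentially the same route as the paper: both reduce the lemma to the pointwise weight comparison $\comii{y}^{-\sigma/2}\rho_{m,\ell-1}\le C\rho_{m,\ell}$ (the paper phrases it as $|a_{m,\ell}(y)|\le C\comii{y}^{\sigma/2}$ for the ratio $a_{m,\ell}=\rho_{m,\ell-1}/\rho_{m,\ell}$) together with $|\partial_y\rho_{m,\ell}|\le C\rho_{m,\ell}$, and then conclude by Leibniz. The only difference is cosmetic: you derive the ratio bound by integrating $\frac{d}{d\gamma}\bigl(-\gamma\log(1+cy/\gamma)\bigr)$ over $[\gamma_{\ell-1},\gamma_\ell]$, while the paper gets the same $(1+\tfrac{2cy}{(3m+\ell-1)\sigma})^{\sigma/2}$ bound by a direct algebraic factoring and monotonicity of the base.
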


\begin{proof} We can write
	\begin{eqnarray*}
		\Lambda^{1/3}\Lambda^{-2}_\delta W_m^{\ell-1}=e^{2cy}\inner{1+\frac{2cy}{(3m+\ell-1)\sigma} }^{-\frac{(3m+\ell-1)\sigma}{2}}
(1+cy)^{-1} \Lambda^{{\ell\over 3}}\Lambda^{-2}_\delta =a_{m,\ell}(y)\Lambda^{-2}_\delta W_m^\ell,
	\end{eqnarray*}
	where 
	\begin{eqnarray*}
		a_{m,\ell}(y)=\inner{1+\frac{2cy}{(3m+\ell-1)\sigma} }^{-\frac{(3m+\ell-1)\sigma}{2}}\inner{1+\frac{2cy}{(3m+\ell)\sigma} }^{\frac{(3m+\ell)\sigma}{2}}.
	\end{eqnarray*}
Direct computation gives
\begin{eqnarray*}
	\abs{a_{m,\ell}(y)}&=&\inner{1+\frac{2cy}{(3m+\ell-1)\sigma} }^{\sigma/2} \inner{1+\frac{2cy}{(3m+\ell-1)\sigma} }^{-\frac{(3m+\ell)\sigma}{2}}\inner{1+\frac{2cy}{(3m+\ell)\sigma} }^{\frac{(3m+\ell)\sigma}{2}}\\
	&\leq& \inner{1+\frac{2cy}{(3m+\ell-1)\sigma} }^{\sigma/2}  \leq  C \comii{y}^{\sigma/2}.
	\end{eqnarray*}
Moreover observe  $\abs{\partial_y a_{m,\ell}(y)}\leq 2c\abs{a_{m,\ell}(y)}$, and thus 
	\begin{eqnarray*}
	\abs{\partial_y a_{m,\ell}(y)} \leq     C \comii{y}^{\sigma/2}.
	\end{eqnarray*}
As a result, 
\begin{eqnarray*}
	&&\norm{\comii y^{-\sigma/2} \partial_y\Lambda^{1/3}\Lambda^{-2}_\delta W_m^{\ell-1}f_m}_{L^2(\mathbb R_+^2)}=\norm{\comii y^{-\sigma/2} \partial_y\left(a_{m,\ell}\Lambda^{-2}_\delta W_m^{\ell}f_m\right)}_{L^2(\mathbb R_+^2)}\\
	&\leq & \norm{\comii y^{-\sigma/2} a_{m,\ell} \partial_y \Lambda^{-2}_\delta W_m^{\ell}f_m}_{L^2(\mathbb R_+^2)}+\norm{\comii y^{-\sigma/2} \inner{\partial_y a_{m,\ell}}\Lambda^{-2}_\delta W_m^{\ell}f_m}_{L^2(\mathbb R_+^2)}\\
	&\leq & C\bigg(\norm{  \partial_y \Lambda^{-2}_\delta W_m^{\ell}f_m}_{L^2(\mathbb R_+^2)}
+ \norm{   \Lambda^{-2}_\delta W_m^{\ell}f_m}_{L^2(\mathbb R_+^2)}\bigg).
\end{eqnarray*}
The  proof of Lemma \ref{equ+} is thus complete. 
\end{proof}

\begin{lemma}\label{E1}
There exists a constant $C$,  depending only on $\sigma,$ $c$, and $C_*$ ,    such that for any integers $m\geq N_0+1$,   we have
\begin{eqnarray*}
	&&\norm{  \phi_{m+1}^0
W^0_{m+1} f_{m+1}}_{L^\infty( [0,T];~L^2( \mathbb
  R_+^2))}+\sum_{j=1}^2 \norm{\partial_y^j \Lambda^{-\frac{2(j-1)}{3}} \phi_{m+1}^0
W^0_{m+1} f_{m+1}}_{L^2( [0,T]\times \mathbb
  R_+^2)} \\
  &\leq& ~C\norm{    \phi_{m}^{3}
W^{3}_{m}  f_{m}}_{L^\infty( [0,T];~ L^2(\mathbb
  R_+^2))} +C \sum_{j=1}^3\norm{\partial_y^j\Lambda^{-\frac{2(j-1)}{3}} \phi_{m}^3
W^{3}_{m} f_{m}}_{L^2( [0,T]\times \mathbb
  R_+^2)},
\end{eqnarray*}
and
\begin{eqnarray*}
	&& \norm{\partial_y^3 \Lambda^{-1} \phi_{m+1}^0
W^0_{m+1} f_{m+1}}_{L^2( [0,T]\times \mathbb
  R_+^2)} \\
&  \leq& ~C\norm{    \phi_{m}^{3}
W^{3}_{m}  f_{m}}_{L^\infty( [0,T];~ L^2(\mathbb
  R_+^2))} +C\sum_{j=1}^2 \norm{\partial_y^j \Lambda^{-\frac{2(j-1)}{3}} \phi_{m}^3
W^{3}_{m} f_{m}}_{L^2( [0,T]\times \mathbb
  R_+^2)}\\
  &&+C \norm{\partial_y^3\Lambda^{-1} \phi_{m}^3
W^{3}_{m} f_{m}}_{L^2( [0,T]\times \mathbb
  R_+^2)}.
\end{eqnarray*}
\end{lemma}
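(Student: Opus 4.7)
The proof rests on the algebraic identity
\begin{equation*}
\phi_{m+1}^{0}\, W_{m+1}^{0} \;=\; \phi_{m}^{3}\, W_{m}^{3}\, \Lambda^{-1}.
\end{equation*}
Inspection of \eqref{wmi+++++} and \eqref{pmi++++++} shows that $\phi_{m+1}^{0}=\phi_{m}^{3}$ (both exponents equal $3(m-N_0-1)+3$) and that the $y$-multiplier parts of $W_{m+1}^{0}$ and $W_{m}^{3}$ coincide, since the indices $3(m+1)+0$ and $3m+3$ agree; the only difference is the extra factor $\Lambda$ attached to $W_{m}^{3}$. Writing $\Omega := W_{m+1}^{0}$, which is a pure multiplication operator in $y$ (and therefore commutes with every Fourier multiplier in $x$), the left-hand side quantities become $\partial_{y}^{j}\Lambda^{-2(j-1)/3}\bigl(\phi_{m}^{3}\,\Omega\, f_{m+1}\bigr)$, and the proof reduces to controlling these in terms of $\phi_{m}^{3}W_{m}^{3}f_{m}$ and its $y$-derivatives.

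Next I would use the decomposition
\begin{equation*}
f_{m+1} \;=\; \partial_{x} f_{m} \;+\; \alpha\,\partial_{x}^{m} u, \qquad \alpha := \partial_{x}\!\Bigl(\tfrac{\partial_{y}\omega}{\omega}\Bigr),
\end{equation*}
which follows from \eqref{fmfun+} by differentiating in $x$; the hypotheses \eqref{1.3}--\eqref{1.5} ensure that $\alpha$ together with its first few $y$-derivatives are bounded on $\RR_{+}^{2}$ with exponential decay in $y$. For the first piece, the crucial manipulation is
$\phi_{m}^{3}\,\Omega\,\partial_{x} f_{m} = (\Lambda^{-1}\partial_{x})(\phi_{m}^{3} W_{m}^{3} f_{m})$; since $\Lambda^{-1}\partial_{x}$ is bounded on $L^{2}$ and commutes with $\partial_{y}^{j}$ and $\Lambda^{-2(j-1)/3}$, this contribution is exactly $\norm{\partial_{y}^{j}\Lambda^{-2(j-1)/3}\phi_{m}^{3}W_{m}^{3}f_{m}}_{L^{2}}$, which appears on the right.

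For the second piece $\phi_{m}^{3}\,\Omega\,\alpha\,\partial_{x}^{m} u$, I would write $\Omega\alpha\partial_{x}^{m}u = \alpha\,\Lambda^{-1}(W_{m}^{3}\partial_{x}^{m}u)$ and invoke the Poincar\'e-type estimate \eqref{equ1+a} to bound $\norm{\comii{y}^{-1}W_{m}^{3}\partial_{x}^{m}u}_{L^{2}}$ by $\norm{W_{m}^{3}f_{m}}_{L^{2}}$. Each $\partial_{y}$ that falls on $\Omega$ or on $\alpha$ produces only a zero-order factor, handled by \eqref{comfm}--\eqref{+comfm++}; each $\partial_{y}$ that falls on $\partial_{x}^{m}u$ is rewritten using $\partial_{y}\partial_{x}^{m}u = \partial_{x}^{m}\omega$ together with the identity $\partial_{x}^{m}\omega = f_{m} + (\partial_{y}\omega/\omega)\partial_{x}^{m}u$, so that every top-order contribution is converted back into $f_{m}$, $\partial_{y}f_{m}$ or $\partial_{y}^{2}f_{m}$ (with weights controlled again by \eqref{equ1+a}). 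The commutators $[\Lambda^{-2(j-1)/3},\alpha]$ are controlled by Lemma \ref{lemma3.1}. The second estimate of the lemma, involving $\partial_{y}^{3}\Lambda^{-1}$, is handled by precisely the same scheme; the extra derivative means that the top-order term now genuinely produces $\norm{\partial_{y}^{3}\Lambda^{-1}\phi_{m}^{3}W_{m}^{3}f_{m}}_{L^{2}}$ on the right-hand side.

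The main obstacle is bookkeeping: one must verify that every constant obtained depends only on $\sigma,c,C_{\ast}$ and is independent of $m$. This uniformity is built into the normalization of $\Omega$, which satisfies $|\partial_{y}^{k}\Omega|\leq C_{k}\Omega$ with $C_{k}$ independent of $m$, and into the Poincar\'e-type estimate \eqref{equ1+a}, whose constant is likewise $m$-independent.
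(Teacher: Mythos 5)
Your proposal is correct and follows essentially the same route as the paper: the same decomposition $f_{m+1}=\partial_x f_m+\bigl[\partial_x\bigl(\tfrac{\partial_y\omega}{\omega}\bigr)\bigr]\partial_x^m u$, the observation that $\phi_{m+1}^0=\phi_m^3$ and $W_{m+1}^0\Lambda=W_m^3$ (which the paper invokes via \eqref{phi} and \eqref{+wmi}), the Poincar\'e-type Lemma \ref{lemma4.2} for the $\partial_x^m u$ term, and the commutator bounds \eqref{comfm}--\eqref{+comfm++} for the $\partial_y^j$ factors. The only cosmetic inaccuracy is calling the decay of $\alpha$ ``exponential''; what is actually available and needed is the bound $|\alpha|\le C\comii y^{-1}$ coming from \eqref{1.3} and \eqref{1.5}, exactly as the paper uses.
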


\begin{proof} 
In the proof we use $C$ to denote different constants which are independent of $m$.  In view of the definition \reff{fmfun+} of $f_m$, 
we have, observing \reff{phi}, 
\begin{eqnarray*}
	&& \norm{  \phi_{m+1}^0
W^0_{m+1} f_{m+1}}_{L^\infty( [0,T];~L^2( \mathbb
  R_+^2))} \\
  &\leq & \norm{  \phi_{m+1}^0
W^0_{m+1} \partial_x f_{m}}_{L^\infty( [0,T];~L^2( \mathbb
  R_+^2))}+\norm{  \phi_{m+1}^0
W^0_{m+1} \big[ \partial_x \big((\partial_y\omega)/\omega \big) \big]\partial_x^m u}_{L^\infty( [0,T];~L^2( \mathbb
  R_+^2))}\\
  &\leq & \norm{  \phi_{m}^3
W^0_{m+1} \Lambda^1 f_{m}}_{L^\infty( [0,T];~L^2( \mathbb
  R_+^2))}+C \norm{  \comii y^{-1} \phi_{m}^3
W^3_{m}  \partial_x^m u}_{L^\infty( [0,T];~L^2( \mathbb
  R_+^2))}\\
  &\leq &C  \norm{  \phi_{m}^3
W^3_{m} f_{m}}_{L^\infty( [0,T];~L^2( \mathbb
  R_+^2))},
\end{eqnarray*}
the last inequality using \reff{equ1+} and \reff{+wmi}.   Similarly,  using  \reff{comfm},  we can deduce that 
\begin{eqnarray*}
	 \norm{  \partial_y \phi_{m+1}^0
W^0_{m+1} f_{m+1}}_{L^\infty( [0,T];~L^2( \mathbb
  R_+^2))} \leq  C \norm{ \partial_y  \phi_{m}^3
W^3_{m} f_{m}}_{L^\infty( [0,T];~L^2( \mathbb
  R_+^2))}+C \norm{  \phi_{m}^3
W^3_{m} f_{m}}_{L^\infty( [0,T];~L^2( \mathbb
  R_+^2))}. 
\end{eqnarray*}
The other terms \[ \norm{  \partial_y^2\Lambda^{-2/3} \phi_{m+1}^0
W^0_{m+1} f_{m+1}}_{L^\infty( [0,T];~L^2( \mathbb
  R_+^2))}, \quad  \norm{  \partial_y^3\Lambda^{-1} \phi_{m+1}^0
W^0_{m+1} f_{m+1}}_{L^\infty( [0,T];~L^2( \mathbb
  R_+^2))}\] 
  can  treated in the same way,  thanks to \reff{comfm+} and  \reff{+comfm++}.   So we omit it here.  
 Thus the  proof  of Lemma \ref{E1} is   complete. 
\end{proof}

\begin{proof}[Proof of Lemma \ref{lemma261}]
Observe
\begin{eqnarray*}
(1+y)^{-\frac{\sigma}{2}}&=&\inner{\frac{(3m+\ell-1)\sigma}{2c} }^{-\frac{\sigma}{2}}
\inner{\frac{2c}{(3m+\ell-1)\sigma}
+\frac{2cy}{(3m+\ell-1)\sigma} }^{-\frac{\sigma}{2}}\\
&\geq& C \inner{\frac{(3m+\ell-1)\sigma}{2c} }^{-\frac{\sigma}{2}}
\inner{1+\frac{2cy}{(3m+\ell-1)\sigma} }^{-\frac{\sigma}{2}}\\
&\geq& Cm ^{-\frac{\sigma}{2}}
\inner{1+\frac{2cy}{(3m+\ell-1)\sigma} }^{-\frac{\sigma}{2}}.
\end{eqnarray*}
Then 
\begin{eqnarray}\label{lowb}
(1+y)^{-\frac{\sigma}{2}}\inner{1+\frac{2cy}{(3m+\ell-1)\sigma}}^{-\frac{(3m+\ell-1)\sigma}{2}}
\geq C m^{-\frac{\sigma}{2}}
\inner{1+\frac{2cy}{(3m+\ell-1)\sigma}}^{-\frac{(3m+\ell)\sigma}{2}}.
\end{eqnarray}
Moreover we find
\begin{eqnarray*}
\inner{1+\frac{2cy}{(3m+\ell-1)\sigma}}^{-\frac{(3m+\ell)\sigma}{2}}
&=&\inner{\frac{1}{(3m+\ell-1)\sigma }}^{-\frac{(3m+\ell)\sigma}{2}}
\inner{(3m+\ell-1)\sigma+2cy}^{-\frac{(3m+\ell)\sigma}{2}}\\
&=&\inner{\frac{(3m+\ell)\sigma}{(3m+\ell-1)\sigma} }^{-\frac{(3m+\ell)\sigma}{2}}
\inner{\frac{(3m+\ell-1)}{(3m+\ell)}+\frac{2cy}{(3m+\ell)\sigma} }^{-\frac{(3m+\ell)\sigma}{2}}\\
&\geq&\inner{\frac{ 3m+\ell}{3m+\ell-1} }^{-\frac{(3m+\ell)\sigma}{2}}
\inner{1+\frac{2cy}{(3m+\ell)\sigma} }^{-\frac{(3m+\ell)\sigma}{2}}\\
&\geq&C
\inner{1+\frac{2cy}{(3m+\ell)\sigma} }^{-\frac{(3m+\ell)\sigma}{2}},
\end{eqnarray*}
which along with \reff{lowb} gives
\begin{eqnarray*}
 \inner{1+\frac{2cy}{(3m+\ell)\sigma} }^{-\frac{(3m+\ell)\sigma}{2}}\leq C m^{\frac{\sigma}{2}}(1+y)^{-\frac{\sigma}{2}}\inner{1+\frac{2cy}{(3m+\ell-1)\sigma}}^{-\frac{(3m+\ell-1)\sigma}{2}}.
\end{eqnarray*}
As a result,  recalling
\begin{eqnarray*}
(1+y)^{-\frac{\sigma}{2}}\Lambda^{1/3}W^{\ell -1}_m
=(1+y)^{-\frac{\sigma}{2}}e^{2cy}\inner{1+\frac{2cy}{(3m+\ell-1)\sigma}}^{-\frac{(3m+\ell-1)\sigma}{2}}
(1+cy)^{-1}\Lambda^{\frac{\ell }{3}},
\end{eqnarray*}
we have, observing  $\phi^{-\frac{1}{2}}\phi^\ell_m=\phi^{\frac{1}{2}}\phi^{\ell-1}_m $
\begin{eqnarray*}
\|\phi^{-\frac{1}{2}} \Lambda^{-2}_\delta
\phi^\ell_m W^\ell_m f_m\|_{L^2( [0,T]\times \mathbb
  R_+^2)}  \leq C m^{\sigma/2}\norm{(1+y)^{-\frac{\sigma}{2}}
\Lambda^{1/3} \phi^{\frac{1}{2}} \Lambda^{-2}_\delta \phi^{\ell-1}_mW^{\ell-1}_m f_m }_{L^2( [0,T]\times \mathbb
  R_+^2)}, 
  \end{eqnarray*}
 that is,  recalling $F=\Lambda_\delta^{-2} \phi^{\ell}_{m}W_{m}^\ell f_{m}$ and $f=\phi^{1/2}\Lambda_\delta^{-2} \phi^{\ell-1}_{m}W_{m}^{\ell-1} f_{m}$, 
 \begin{eqnarray*}
 	\| \phi^{-1/2} F\|_{L^2( [0,T]\times \mathbb
  R_+^2)}\leq C m^{\sigma/2} \norm{\comii y ^{-\frac{\sigma}{2}}
\Lambda^{1/3}f }_{L^2( [0,T]\times \mathbb
  R_+^2)}.
 \end{eqnarray*}
 Moreover,   using \reff{+wmi} and \reff{comfm+} we have,  observing $\phi_m^\ell\leq \phi^{1/2}\phi^{\ell-1}_m$,
  \begin{eqnarray*}
  &&	\| \partial_y^2\Lambda^{-2/3}  F\|_{L^2( [0,T]\times \mathbb
  R_+^2)}= \norm{\partial_y^2\Lambda^{-2/3}\Lambda^{-2}_\delta \phi_m^\ell 
W^\ell_m f_m}_{L^2( [0,T]\times \mathbb
  R_+^2)} \\
 &\leq&  C \norm{\partial_y^2\Lambda^{-1/3}\Lambda^{-2}_\delta \phi^\ell_m  
W^{\ell-1}_m f_m}_{L^2( [0,T]\times \mathbb
  R_+^2)} +C \| \partial_y \Lambda^{-1/3} \Lambda^{-2}_\delta \phi^{\ell}_{m}W_{m}^{\ell-1} f_{m}\|_{L^2( [0,T]\times \mathbb
  R_+^2)}\\
  &&+C \norm{\Lambda^{-1/3} \Lambda^{-2}_\delta \phi^{\ell}_{m}W_{m}^{\ell-1} f_{m}}_{L^2( [0,T]\times \mathbb
  R_+^2)}\\
 & \leq & C \norm{\partial_y^2\Lambda^{-1/3}f}_{L^2( [0,T]\times \mathbb
  R_+^2)} +C\inner{ \|  \phi^{\ell-1}_{m}W_{m}^{\ell-1} f_{m}\|_{L^2( [0,T]\times \mathbb
  R_+^2)}+\norm{\partial_y  \phi^{\ell-1}_{m}W_{m}^{\ell-1} f_{m}}_{L^2( [0,T]\times \mathbb
  R_+^2)}}.
  \end{eqnarray*}
 Then combining the above inequalities,  the first estimate in Lemma \ref{lemma261} follows.  The second one can be deduced similarly. In fact
 using   \reff{+wmi} and \reff{+comfm++}  gives
  \begin{eqnarray*}
  	&&  \norm{\partial_y^3\Lambda^{-1}F }_{L^2( [0,T]\times \mathbb
  R_+^2)}=\norm{\partial_y^3\Lambda^{-1}\Lambda^{-2}_\delta \phi_m^\ell 
W^\ell_m f_m}_{L^2( [0,T]\times \mathbb
  R_+^2)}	\\
 & \leq & C \norm{\partial_y^3\Lambda^{-2/3}\Lambda^{-2}_\delta \phi_m^\ell 
W^{\ell-1}_m f_m}_{L^2( [0,T]\times \mathbb
  R_+^2)}+C \norm{\partial_y^2\Lambda^{-2/3}\Lambda^{-2}_\delta \phi_m^\ell 
W^{\ell-1}_m f_m}_{L^2( [0,T]\times \mathbb
  R_+^2)}\\
 & & +C\norm{\partial_y \Lambda^{-2/3}\Lambda^{-2}_\delta   \phi_m^{\ell-1}
W^{\ell-1}_m  f_m}_{L^2( [0,T]\times \mathbb
  R_+^2)}+C\norm{ \Lambda^{-2/3}\Lambda^{-2}_\delta   \phi_m^{\ell-1}
W^{\ell-1}_m  f_m}_{L^2( [0,T]\times \mathbb
  R_+^2)}\\
 & \leq & ~C \norm{\partial_y^3\Lambda^{-2/3}f }_{L^2( [0,T]\times \mathbb
  R_+^2)}+C \norm{\partial_y^2\Lambda^{-1/3}f}_{L^2( [0,T]\times \mathbb
  R_+^2)}\\
 & & +C\inner{ \|  \phi^{\ell-1}_{m}W_{m}^{\ell-1} f_{m}\|_{L^2( [0,T]\times \mathbb
  R_+^2)}+\norm{\partial_y  \phi^{\ell-1}_{m}W_{m}^{\ell-1} f_{m}}_{L^2( [0,T]\times \mathbb
  R_+^2)}}.
  \end{eqnarray*}
  This is just the second estimate in Lemma \ref{lemma261}.  The proof is thus complete. 
\end{proof}

\section{Estimates of the nonlinear terms}\label{section5}

In this section we  estimate the nonlinear terms $\mathcal Z_{m,\ell,\delta}$ defined in \reff{2.18a},  and prove the Proposition \ref{nonli}. 
Recall
\begin{eqnarray*}
\mathcal{Z}_{m,\ell,\delta}
&=&-\sum_{j=1}^m{m\choose j}\Lambda_\delta^{-2} \phi^{\ell}_{m} W^\ell_m(\p^j u) f_{m+1-j}
-\sum_{j=1}^{m-1}{m\choose j}\Lambda_\delta^{-2} \phi^{\ell}_{m} W^\ell_m(\p^j v)\partial_y f_{m-j}\\
&&-\Lambda_\delta^{-2} \phi^{\ell}_{m}W^\ell_m\left[\partial_y\inner{\frac{\partial_y\omega}{\omega}}\right]\sum_{j=1}^{m-1}{m\choose j}
(\p^j v)(\p^{m-j}u)
-2\Lambda_\delta^{-2} \phi^{\ell}_{m}W^\ell_m\left[\partial_y\inner{\frac{\partial_y\omega}{\omega}}\right]f_m\\
&&+\Lambda_\delta^{-2} \left(\partial_t\phi^{\ell}_{m}\right)W^\ell_{m}f_{m}+\com{u \partial_x+v\partial_y-\partial_y^2, \Lambda_\delta^{-2}\phi^\ell_m W_m^\ell  } f_m\\
&=&\mathcal J_{m,\ell,\delta} +\Lambda_\delta^{-2} \left(\partial_t\phi^{\ell}_{m}\right)W^\ell_{m}f_{m}+\com{u \partial_x+v\partial_y-\partial_y^2, \Lambda_\delta^{-2}\phi^\ell_m W_m^\ell  } f_m,
\end{eqnarray*}  
where 
\begin{eqnarray*}
	\mathcal J_{m,\ell,\delta}&=&-\sum_{j=1}^m{m\choose j}\Lambda_\delta^{-2} \phi^{\ell}_{m} W^\ell_m(\p^j u) f_{m+1-j}
-\sum_{j=1}^{m-1}{m\choose j}\Lambda_\delta^{-2} \phi^{\ell}_{m} W^\ell_m(\p^j v)\partial_y f_{m-j}\\
&&-\Lambda_\delta^{-2} \phi^{\ell}_{m}W^\ell_m\left[\partial_y\inner{\frac{\partial_y\omega}{\omega}}\right]\sum_{j=1}^{m-1}{m\choose j}
(\p^j v)(\p^{m-j}u)
-2\Lambda_\delta^{-2} \phi^{\ell}_{m}W^\ell_m\left[\partial_y\inner{\frac{\partial_y\omega}{\omega}}\right]f_m.
\end{eqnarray*}

We  remark it is suffices to prove  the estimates \reff{2.20} and \reff{2.21} in Proposition \ref{nonli},  since the esimate \reff{+2.20++} can be treated exactly similar as \reff{2.20}.  
Next we will proceed to prove \reff{2.20} and \reff{2.21} through the following Proposition \ref{+prp5.1+} and Proposition \ref{prop5.1}.   Proposition \ref{prop5.1} is devoted to treating the term $\mathcal J_{m,\ell,\delta}$ in the definition of $\mathcal Z_{m,\ell,\delta}$,  while the the other two terms are estimated in Proposition \ref{+prp5.1+}. 
 
 To simplify the notations, we will use $C$ to denote different constants depending only on   $\sigma,$ $c$, and the constants $C_0, C_*$ in Theorem \ref{mainthm}, but independent of $m$ and $\delta$.

\begin{proposition}\label{+prp5.1+}
We  have,  denoting  $F=\Lambda_\delta^{-2}  \phi_m^\ell W^\ell_{m}f_{m}$ and $\tilde f=\phi^{1/2}\Lambda_\delta^{-2} \phi^{\ell-1}_{m}W^{\ell-1}_m  f_m,$
\begin{eqnarray*}
	&& \norm{\phi^{1/2} \Lambda_\delta^{-2} \left(\partial_t\phi^{\ell}_{m}\right)W^\ell_{m}f_{m}}_{L^2\inner{[0,T]\times\mathbb R_+^2}}+ \norm{  \phi^{1/2} [u \partial_x+v\partial_y-\partial_y^2,~  \Lambda_\delta^{-2} \phi^{\ell}_{m}W^\ell_m] f_m }_{L^2\inner{[0,T]\times\mathbb R_+^2}} \\
&\leq &	m\, C \norm{\phi^{-1/2} F}_{L^2\inner{[0,T]\times\mathbb R_+^2}}+ C \norm{  \partial_y F }_{L^2\inner{[0,T]\times\mathbb R_+^2}}
\end{eqnarray*}	
and
\begin{eqnarray*}
	&&\norm{\Lambda^{-\frac{2}{3}}\partial_y \phi^{1/2} \Lambda_\delta^{-2} \left(\partial_t\phi^{\ell-1}_{m}\right)W^{\ell-1}_{m}f_{m}}_{L^2\inner{[0,T]\times\mathbb R_+^2}}\\
	&&\qquad\qquad+ \norm{  \Lambda^{-\frac{2}{3}}\partial_y \phi^{1/2} [u \partial_x+v\partial_y-\partial_y^2,  ~\Lambda_\delta^{-2} \phi^{\ell-1}_{m}W^{\ell-1}_m] f_m }_{L^2\inner{[0,T]\times\mathbb R_+^2}}\\
	&\leq & C\norm{  \comii y^{-\sigma} \Lambda^{ \frac{1}{3}} \tilde f}_{L^2([0,T]\times\RR^2_+)}+C\norm{  \partial_y^2 \Lambda^{-\frac{2}{3}} \tilde f}_{L^2([0,T]\times\RR^2_+)}\\
		&&+ m C \norm{   \Lambda^{-\frac{2}{3}} \phi^{-1/2}  \partial_y   \phi^{\ell-1}_{m}W^{\ell-1}_m      f_m}_{L^2([0,T]\times\RR^2_+)}+C\norm{    \Lambda^{-\frac{2}{3}}  \phi^{\ell-1}_{m}W^{\ell-1}_m       f_m}_{L^2([0,T]\times\RR^2_+)}.
\end{eqnarray*}	
\end{proposition}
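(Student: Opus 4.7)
The plan is to exploit the tensor-product structure $\Lambda_\delta^{-2}\phi^\ell_m W^\ell_m = \phi^\ell_m(t)\cdot\rho_{m,\ell}(y)\cdot\Lambda^{\ell/3}\Lambda_\delta^{-2}$, where $\rho_{m,\ell}(y)=e^{2cy}(1+\tfrac{2cy}{(3m+\ell)\sigma})^{-(3m+\ell)\sigma/2}(1+cy)^{-1}$ depends only on $y$, $\phi^\ell_m$ depends only on $t$, and $\Lambda^{\ell/3}\Lambda_\delta^{-2}$ acts only on $x$. For the first inequality I would split the left-hand side into three contributions and treat them in order.

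For the time-derivative piece, the bound $|\partial_t\phi^\ell_m|\le C(3m+\ell)\phi^{-1}\phi^\ell_m$ together with $L^2$-boundedness of $\Lambda_\delta^{-2}$ immediately gives $\|\phi^{1/2}\Lambda_\delta^{-2}(\partial_t\phi^\ell_m)W^\ell_m f_m\|\le Cm\|\phi^{-1/2}F\|$. For $[u\partial_x,\Lambda_\delta^{-2}\phi^\ell_m W^\ell_m]f_m$, since $\phi^\ell_m$ and $\rho_{m,\ell}$ commute with $u\partial_x$ (they involve no $x$-dependence), this collapses to $\phi^\ell_m\rho_{m,\ell}[u\partial_x,\Lambda^{\ell/3}\Lambda_\delta^{-2}]f_m$, and Lemma \ref{lemma3.1} (applicable because $\|u\|_{C^k_b},\|v\|_{C^k_b}$ are controlled by \eqref{1.5} via Sobolev embedding and $v=-\int_0^y\partial_x u\,dy'$) yields a bound $C\|F\|$. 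For $[v\partial_y,\Lambda_\delta^{-2}\phi^\ell_m W^\ell_m]$, I would split further as $\phi^\ell_m[v,\Lambda_\delta^{-2}\Lambda^{\ell/3}]\rho_{m,\ell}\partial_y + \phi^\ell_m v(\partial_y\rho_{m,\ell})\Lambda^{\ell/3}\Lambda_\delta^{-2}$, use $|\partial_y\rho_{m,\ell}|\le C\rho_{m,\ell}$ uniformly in $m$ for the second piece, and Lemma \ref{lemma3.1} together with $\|\partial_y F\|$ for the first piece. For $[\partial_y^2,\Lambda_\delta^{-2}\phi^\ell_m W^\ell_m]$, since only $\rho_{m,\ell}$ fails to commute, it reduces to $\phi^\ell_m[\partial_y^2,\rho_{m,\ell}]\Lambda^{\ell/3}\Lambda_\delta^{-2}f_m$, and \eqref{comfm+} applied to $g=\Lambda_\delta^{-2}f_m$ gives control by $C(\|F\|+\|\partial_y F\|)$. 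Assembling these and using $\phi\le1$ to absorb $\|F\|$ into $\|\phi^{-1/2}F\|$ produces the first estimate.

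For the second inequality I would run the same three-piece split at index $\ell-1$, with $\Lambda^{-2/3}\partial_y$ prepended. Since $\Lambda^{-2/3}$ and $\partial_y$ commute with $\phi^{1/2}$, $\Lambda_\delta^{-2}$, $\phi^{\ell-1}_m$, and $\Lambda^{(\ell-1)/3}$, the extra $\partial_y$ can be redistributed onto either $\rho_{m,\ell-1}(y)$, which preserves the weight up to a uniformly bounded factor, or onto $f_m$, which I rewrite as $\partial_y(\phi^{\ell-1}_m W^{\ell-1}_m f_m)$ modulo commutators controlled by \eqref{comfm}. The resulting terms match the third and fourth summands on the right. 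For the $[v\partial_y,\cdot]$ commutator, the key $\partial_y$ appearing naturally on $\tilde f$, once combined with the outer $\partial_y$, yields the $\|\partial_y^2\Lambda^{-2/3}\tilde f\|$ term. Finally, whenever $\partial_y$ hits $\rho_{m,\ell-1}$ and produces the factor $\Lambda^{(\ell-1)/3}$, I invoke Lemma \ref{equ+} to trade this combination for $\Lambda^{1/3}$ acting on $\tilde f$, paying the polynomial factor $\langle y\rangle^{\sigma/2}$; the net effect is precisely $\|\langle y\rangle^{-\sigma/2}\cdot\langle y\rangle^{\sigma/2}\Lambda^{1/3}\tilde f\|=\|\langle y\rangle^{-\sigma}\Lambda^{1/3}\tilde f\|$ when we also use the monotonicity factor from \eqref{1.3} to gain one more $\langle y\rangle^{-\sigma/2}$.

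The main obstacle I anticipate is the bookkeeping in the second inequality: the two extra factors $\Lambda^{-2/3}$ and $\partial_y$ must be moved past a chain of operators with overlapping $y$- and $x$-dependence, and after each commutation the resulting $\Lambda$-order (nominally $\Lambda^{(\ell-1)/3-2/3}$) must be matched precisely to one of the four templates on the right-hand side. The one delicate point is identifying the $\langle y\rangle^{-\sigma}\Lambda^{1/3}\tilde f$ norm from terms that superficially look like $\Lambda^{(\ell-1)/3}$ times a weight: Lemma \ref{equ+} is the crucial identity that makes this matching work. Everything else, including the pointwise derivative bounds on $\rho_{m,\ell}$ and $\rho_{m,\ell-1}$ uniformly in $m$, and the commutator estimates for $\Lambda_\delta^{-2}$ composed with multiplications, is routine given Lemmas \ref{lemma3.1}, \ref{equ+} and the estimates \eqref{comfm}--\eqref{+comfm++}.
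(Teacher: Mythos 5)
Your treatment of the first inequality and your overall organization (factoring $\Lambda_\delta^{-2}\phi^\ell_m W^\ell_m$ into the $t$-, $y$- and $x$-parts, using $|\partial_t\phi^\ell_m|\lesssim m\,\phi^{-1}\phi^\ell_m$, Lemma \ref{lemma3.1} for the $x$-commutators, and the uniform bounds $|\partial_y^\alpha\rho_{m,\ell}|\le C\rho_{m,\ell}$ for the $y$-commutators) are sound and essentially the same as the paper's proof, which splits the commutator via \eqref{15042210} into a piece involving the full composite operator and a remainder, and then invokes exactly these ingredients.

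The genuine problem is your accounting for the term $\norm{\comii y^{-\sigma}\Lambda^{1/3}\tilde f}$ in the second inequality. You claim it arises "whenever $\partial_y$ hits $\rho_{m,\ell-1}$ and produces the factor $\Lambda^{(\ell-1)/3}$", to be converted by Lemma \ref{equ+}, and you conclude with the identity $\norm{\comii y^{-\sigma/2}\comii y^{\sigma/2}\Lambda^{1/3}\tilde f}=\norm{\comii y^{-\sigma}\Lambda^{1/3}\tilde f}$. This is wrong on three counts: $\partial_y$ acting on the $y$-weight produces only $\partial_y\rho_{m,\ell-1}\le C\rho_{m,\ell-1}$ and no Fourier multiplier; Lemma \ref{equ+} trades $\comii y^{-\sigma/2}\partial_y\Lambda^{1/3}W_m^{\ell-1}$ for $\partial_y W_m^{\ell}$ (it is used in step (d) of the proof of \eqref{claim++1}, not here); and the exponent arithmetic $-\sigma/2+\sigma/2=-\sigma$ is false, so even granting your mechanism you would land on $\comii y^{-\sigma/2}$ or $\comii y^{0}$, not $\comii y^{-\sigma}$. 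In the paper the term comes from a completely different place: when the commutator is taken with the full operator containing the extra $\partial_y$, one meets $\com{u\partial_x,~\partial_y}=-(\partial_y u)\partial_x$; the prefactor $\Lambda^{-2/3}$ turns $\partial_x$ into (essentially) $\Lambda^{1/3}$, and the full weight $\comii y^{-\sigma}$ is supplied pointwise by the hypothesis $|\partial_y u|\le C_*\comii y^{-\sigma}$ from \eqref{1.3} — not by any manipulation of $\rho_{m,\ell-1}$. (In your organization, where the $\partial_y$ stays outside the commutator bracket and is only later distributed, the analogous dangerous term is $\com{(\partial_y u)\partial_x,~\Lambda^{(\ell-1)/3}\Lambda_\delta^{-2}}f_m$, which Lemma \ref{lemma3.1} handles without any $\Lambda^{1/3}$ loss; if you go that route you should say so explicitly rather than manufacturing the $\comii y^{-\sigma}\Lambda^{1/3}\tilde f$ term by an incorrect weight computation.) As written, the derivation of the first right-hand-side term of the second estimate does not hold up, and this is precisely the delicate term that the subelliptic estimate of Proposition \ref{subtang} is later needed to absorb.
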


\begin{proof} It is sufficient to prove the second estimate in Proposition \ref{+prp5.1+}, 
	since the treatment of the first one is similar and easier and we omit it here for brevity.   Observe    
 $$
 \left|\partial_t\phi^{\ell-1}_{m}\right|\le 3m \phi^{\ell-2}_{m}\leq 3m \phi^{\ell-1}_{m}\phi^{-1},
 $$
 and thus 
 \begin{equation}
\label{15042101}
	\norm{\Lambda^{-\frac{2}{3}}\partial_y \phi^{1/2} \Lambda_\delta^{-2} \left(\partial_t\phi^{\ell-1}_{m}\right)W^{\ell-1}_{m}f_{m}}_{L^2\inner{[0,T]\times\mathbb R_+^2}} \leq 3m	\norm{\Lambda^{-\frac{2}{3}} \phi^{-1/2} \partial_y    \phi^{\ell-1}_{m} W^{\ell-1}_{m}f_{m}}_{L^2\inner{[0,T]\times\mathbb R_+^2}}  .
\end{equation}	
 We   write,  using \reff{15042210},
\begin{eqnarray*}
	&&\norm{  \Lambda^{-\frac{2}{3}}\partial_y \phi^{1/2} [u \partial_x+v\partial_y-\partial_y^2,  ~\Lambda_\delta^{-2} \phi^{\ell-1}_{m}W^{\ell-1}_m] f_m }_{L^2\inner{[0,T]\times\mathbb R_+^2}} \\
		&\leq & \norm{  \com{u\p+v\partial_y-\partial_y^2,~ \Lambda^{-\frac{2}{3}}\partial_y \phi^{1/2}\Lambda_\delta^{-2} \phi^{\ell-1}_{m}W^{\ell-1}_m}    f_m }_{L^2([0,T]\times\RR^2_+)}\\
	&&+\norm{  \com{u\p+v\partial_y-\partial_y^2,~ \Lambda^{-\frac{2}{3}}\partial_y \phi^{1/2}}    \Lambda_\delta^{-2} \phi^{\ell-1}_{m}W^{\ell-1}_m  f_m }_{L^2([0,T]\times\RR^2_+)}\\
	&\stackrel{\rm def}{=}&   Q_{5,1}+ Q_{5,2}.
\end{eqnarray*}
We first estimate $Q_{5,1}.$  Observe   
\begin{eqnarray*}
	&& \norm{  \com{u\p,~ \Lambda^{-\frac{2}{3}}\partial_y \phi^{1/2}\Lambda_\delta^{-2} \phi^{\ell-1}_{m}W^{\ell-1}_m}    f_m }_{L^2([0,T]\times\RR^2_+)}\\
		&\leq&   \norm{   \com{ u\p,~ \Lambda^{-\frac{2}{3}} \Lambda_\delta^{-2} \phi^{\ell-1}_{m}W^{\ell-1}_m \partial_y  } \phi^{1/2}  f_m}_{L^2([0,T]\times\RR^2_+)}\\
		&&+\norm{  \Big[u\p,~ \Lambda^{-\frac{2}{3}} \Lambda_\delta^{-2} \phi^{\ell-1}_{m} \com{\partial_y,~ W^{\ell-1}_m}~\Big]  \phi^{1/2}  f_m}_{L^2([0,T]\times\RR^2_+)}.
	\end{eqnarray*}
	On the other hand,  we compute,  using  Lemma  \ref{lemma3.1}  and \reff{comfm},
	\begin{eqnarray*}
		&&  \norm{   \com{ u\p,~ \Lambda^{-\frac{2}{3}} \Lambda_\delta^{-2} \phi^{\ell-1}_{m}W^{\ell-1}_m \partial_y  } \phi^{1/2}  f_m}_{L^2([0,T]\times\RR^2_+)}\\
		&\leq &  \norm{   \com{ u\p,~ \Lambda^{-\frac{2}{3}} \Lambda_\delta^{-2} \phi^{\ell-1}_{m}W^{\ell-1}_m   } \partial_y \phi^{1/2}  f_m}_{L^2([0,T]\times\RR^2_+)}\\
		&&+ \norm{  \Lambda^{-\frac{2}{3}} \Lambda_\delta^{-2} \phi^{\ell-1}_{m}W^{\ell-1}_m  \com{ u\p,~ \partial_y  } \phi^{1/2}  f_m}_{L^2([0,T]\times\RR^2_+)}\\
		&\leq & C \norm{    \Lambda^{-\frac{2}{3}} \Lambda_\delta^{-2} \phi^{\ell-1}_{m}W^{\ell-1}_m   \partial_y  \phi^{1/2}  f_m}_{L^2([0,T]\times\RR^2_+)}+ \norm{   \Lambda^{-\frac{2}{3}} \Lambda_\delta^{-2} \phi^{\ell-1}_{m}W^{\ell-1}_m  (\partial_y u)\partial_x  \phi^{1/2} f_m}_{L^2([0,T]\times\RR^2_+)}\\
		&\leq & C \norm{    \partial_y  \Lambda^{-\frac{2}{3}} \Lambda_\delta^{-2} \phi^{\ell-1}_{m}W^{\ell-1}_m    \phi^{1/2}  f_m}_{L^2([0,T]\times\RR^2_+)}+C\norm{    \Lambda^{-\frac{2}{3}} \Lambda_\delta^{-2} \phi^{\ell-1}_{m}W^{\ell-1}_m     \phi^{1/2}  f_m}_{L^2([0,T]\times\RR^2_+)}\\
		&&+ \norm{  (\partial_y u) \Lambda^{-\frac{2}{3}} \Lambda_\delta^{-2} \phi^{\ell-1}_{m}W^{\ell-1}_m  \partial_x  \phi^{1/2} f_m}_{L^2([0,T]\times\RR^2_+)}\\
		&&+ \norm{ \com{  \Lambda^{-\frac{2}{3}} \Lambda_\delta^{-2} \phi^{\ell-1}_{m}W^{\ell-1}_m,~  (\partial_y u)}\partial_x  \phi^{1/2} f_m}_{L^2([0,T]\times\RR^2_+)}\\
				&\leq & C \norm{    \partial_y  \Lambda^{-\frac{2}{3}}  \phi^{\ell-1}_{m}W^{\ell-1}_m      f_m}_{L^2([0,T]\times\RR^2_+)}+C\norm{    \Lambda^{-\frac{2}{3}}  \phi^{\ell-1}_{m}W^{\ell-1}_m       f_m}_{L^2([0,T]\times\RR^2_+)}\\
		&&+ C\norm{  \comii y^{-\sigma} \Lambda^{ \frac{1}{3}} \phi^{1/2}\Lambda_\delta^{-2} \phi^{\ell-1}_{m}W^{\ell-1}_m  f_m}_{L^2([0,T]\times\RR^2_+)}.
			\end{eqnarray*}
Similarly we also have, using again Lemma \ref{lemma3.1}, 
	\begin{eqnarray*}
	&&\norm{  \Big[u\p,~ \Lambda^{-\frac{2}{3}} \Lambda_\delta^{-2} \phi^{\ell-1}_{m} \com{\partial_y,~ W^{\ell-1}_m}~\Big]  \phi^{1/2}  f_m}_{L^2([0,T]\times\RR^2_+)}\\
	 &\leq& C\norm{    \Lambda^{-\frac{2}{3}}  \phi^{\ell-1}_{m}W^{\ell-1}_m       f_m}_{L^2([0,T]\times\RR^2_+)}.
	\end{eqnarray*}
	As a result, combining these inequalities, we have
	\begin{eqnarray*}
		&&\norm{  \com{u\p,~ \Lambda^{-\frac{2}{3}}\partial_y \phi^{1/2}\Lambda_\delta^{-2} \phi^{\ell-1}_{m}W^{\ell-1}_m}    f_m }_{L^2([0,T]\times\RR^2_+)}\\
	&\leq& C \norm{    \partial_y  \Lambda^{-\frac{2}{3}}   \phi^{\ell-1}_{m}W^{\ell-1}_m      f_m}_{L^2([0,T]\times\RR^2_+)}+C\norm{    \Lambda^{-\frac{2}{3}}  \phi^{\ell-1}_{m}W^{\ell-1}_m       f_m}_{L^2([0,T]\times\RR^2_+)}\\
		&&+ C\norm{  \comii y^{-\sigma} \Lambda^{ \frac{1}{3}} \phi^{1/2}\Lambda_\delta^{-2} \phi^{\ell-1}_{m}W^{\ell-1}_m  f_m}_{L^2([0,T]\times\RR^2_+)}. 
	\end{eqnarray*}
	Similarly,  repeating the above arguments with $u\partial_x$ replaced by $v\partial_y$ and $\partial_y^2$ respectively,  one has
	\begin{eqnarray*}
		&&\norm{  \com{v\partial_y,~ \Lambda^{-\frac{2}{3}}\partial_y \phi^{1/2}\Lambda_\delta^{-2} \phi^{\ell-1}_{m}W^{\ell-1}_m}    f_m }_{L^2([0,T]\times\RR^2_+)}\\
	&\leq& C \norm{  \partial_y^2 \Lambda^{-\frac{2}{3}} \phi^{1/2} \Lambda_\delta^{-2} \phi^{\ell-1}_{m}W^{\ell-1}_m    f_m}_{L^2([0,T]\times\RR^2_+)}+C \norm{  \Lambda^{-\frac{2}{3}}  \partial_y   \phi^{\ell-1}_{m}W^{\ell-1}_m    f_m}_{L^2([0,T]\times\RR^2_+)}\\
		&&+C\norm{   \Lambda^{-\frac{2}{3}}  \phi^{\ell-1}_{m}W^{\ell-1}_m   f_m}_{L^2([0,T]\times\RR^2_+)},
	\end{eqnarray*}
 and
	\begin{eqnarray*}
		&&\norm{  \com{ \partial_y^2,~ \Lambda^{-\frac{2}{3}}\partial_y \phi^{1/2}\Lambda_\delta^{-2} \phi^{\ell-1}_{m}W^{\ell-1}_m}    f_m }_{L^2([0,T]\times\RR^2_+)}\\
	&\leq& C \norm{  \partial_y^2 \Lambda^{-\frac{2}{3}} \phi^{1/2} \Lambda_\delta^{-2} \phi^{\ell-1}_{m}W^{\ell-1}_m    f_m}_{L^2([0,T]\times\RR^2_+)}+C \norm{  \Lambda^{-\frac{2}{3}}  \partial_y  \phi^{\ell-1}_{m}W^{\ell-1}_m    f_m}_{L^2([0,T]\times\RR^2_+)}\\
		&&+C\norm{   \Lambda^{-\frac{2}{3}}  \phi^{\ell-1}_{m}W^{\ell-1}_m   f_m}_{L^2([0,T]\times\RR^2_+)}.
	\end{eqnarray*}
	As a result, we conclude, combining these inequalities, 
	\begin{eqnarray*}
		Q_{5,1} &=&\norm{  \com{u\p+v\partial_y-\partial_y^2,~ \Lambda^{-\frac{2}{3}}\partial_y \phi^{1/2}\Lambda_\delta^{-2} \phi^{\ell-1}_{m}W^{\ell-1}_m}    f_m }_{L^2([0,T]\times\RR^2_+)}\\
	&\leq&   C\norm{  \comii y^{-\sigma} \Lambda^{ \frac{1}{3}} \phi^{1/2}\Lambda_\delta^{-2} \phi^{\ell-1}_{m}W^{\ell-1}_m  f_m}_{L^2([0,T]\times\RR^2_+)}+C\norm{  \partial_y^2 \Lambda^{-\frac{2}{3}} \phi^{1/2} \Lambda_\delta^{-2} \phi^{\ell-1}_{m}W^{\ell-1}_m    f_m}_{L^2([0,T]\times\RR^2_+)}\\
		&&+ C \norm{    \partial_y  \Lambda^{-\frac{2}{3}}   \phi^{\ell-1}_{m}W^{\ell-1}_m      f_m}_{L^2([0,T]\times\RR^2_+)}+C\norm{    \Lambda^{-\frac{2}{3}}   \phi^{\ell-1}_{m}W^{\ell-1}_m       f_m}_{L^2([0,T]\times\RR^2_+)}. 
	\end{eqnarray*}
	The term  $Q_{5,2}$ can be treated similarly and easily,   and we have
	\begin{eqnarray*}
		Q_{5,2} &=&\norm{  \com{u\p+v\partial_y-\partial_y^2,~ \Lambda^{-\frac{2}{3}}\partial_y \phi^{1/2}}    \Lambda_\delta^{-2} \phi^{\ell-1}_{m}W^{\ell-1}_m  f_m }_{L^2([0,T]\times\RR^2_+)}\\
		&\leq&  C\norm{  \comii y^{-\sigma} \Lambda^{ \frac{1}{3}} \phi^{1/2}\Lambda_\delta^{-2} \phi^{\ell-1}_{m}W^{\ell-1}_m  f_m}_{L^2([0,T]\times\RR^2_+)}+C\norm{  \partial_y^2 \Lambda^{-\frac{2}{3}} \phi^{1/2} \Lambda_\delta^{-2} \phi^{\ell-1}_{m}W^{\ell-1}_m    f_m}_{L^2([0,T]\times\RR^2_+)}\\
		&&+ C \norm{    \partial_y  \Lambda^{-\frac{2}{3}}   \phi^{\ell-1}_{m}W^{\ell-1}_m      f_m}_{L^2([0,T]\times\RR^2_+)}+C\norm{    \Lambda^{-\frac{2}{3}}  \phi^{\ell-1}_{m}W^{\ell-1}_m       f_m}_{L^2([0,T]\times\RR^2_+)}. 
	\end{eqnarray*}
	Thus
	\begin{eqnarray*}
		&&\norm{  \Lambda^{-\frac{2}{3}}\partial_y \phi^{1/2} [u \partial_x+v\partial_y-\partial_y^2,  ~\Lambda_\delta^{-2} \phi^{\ell-1}_{m}W^{\ell-1}_m] f_m }_{L^2\inner{[0,T]\times\mathbb R_+^2}} \\ 
		&\leq&   C\norm{  \comii y^{-\sigma} \Lambda^{ \frac{1}{3}} \phi^{1/2}\Lambda_\delta^{-2} \phi^{\ell-1}_{m}W^{\ell-1}_m  f_m}_{L^2([0,T]\times\RR^2_+)}+C\norm{  \partial_y^2 \Lambda^{-\frac{2}{3}} \phi^{1/2} \Lambda_\delta^{-2} \phi^{\ell-1}_{m}W^{\ell-1}_m    f_m}_{L^2([0,T]\times\RR^2_+)}\\
		&&+ C \norm{    \partial_y  \Lambda^{-\frac{2}{3}}  \phi^{\ell-1}_{m}W^{\ell-1}_m      f_m}_{L^2([0,T]\times\RR^2_+)}+C\norm{    \Lambda^{-\frac{2}{3}}  \phi^{\ell-1}_{m}W^{\ell-1}_m       f_m}_{L^2([0,T]\times\RR^2_+)}. 
			\end{eqnarray*}
  This along with \reff{15042101} gives the second estimate in Proposition \ref{+prp5.1+}.   The  proof  is thus complete. 
\end{proof}

\begin{proposition}\label{prop5.1}
Under the induction hypothesis \eqref{2.6}, \eqref{2.6+}, we have,  denoting  $F=\Lambda_\delta^{-2} \phi^{\ell}_{m} W^\ell_{m} f_{m},$
\begin{eqnarray*}
\norm{  \phi^{1/2}\mathcal J_{m,\ell, \delta}}_{L^2\inner{[0,T]\times\mathbb R_+^2}}
 \leq  m C   \norm{  F}_{L^2\inner{[0,T]\times\mathbb R_+^2}}+ C  A^{m-6}  \inner{(m-5)!}^{3(1+\sigma)},
  \end{eqnarray*}
  and
  \begin{eqnarray*}
  	&& \norm{ \Lambda^{-2/3} \partial_y \phi^{1/2} \mathcal J_{m,\ell-1, \delta}}_{L^2\inner{[0,T]\times\mathbb R_+^2}}\\
 &\leq & m C  \inner{ \norm{  \Lambda^{-2/3} \Lambda_\delta^{-2} \phi^{\ell-1}_{m} W^{\ell-1}_{m} f_{m}}_{L^2\inner{[0,T]\times\mathbb R_+^2}}+\norm{  \Lambda^{-2/3}\partial_y \Lambda_\delta^{-2} \phi^{\ell-1}_{m} W^{\ell-1}_{m} f_{m}}_{L^2\inner{[0,T]\times\mathbb R_+^2}}}\\
 &&+ C  A^{m-6}  \inner{(m-5)!}^{3(1+\sigma)},
  \end{eqnarray*}
  where the constant $C>0$ is independent on $m$ and $\delta>0$. 
\end{proposition}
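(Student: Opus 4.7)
The plan is to decompose $\mathcal{J}_{m,\ell,\delta}$ into its four natural constituents and estimate each piece on its own merits. The last piece, $-2\Lambda_\delta^{-2}\phi^\ell_m W^\ell_m[\partial_y(\partial_y\omega/\omega)]f_m$, is controlled immediately by $C\|F\|_{L^2}$ because hypothesis (i) of Theorem~\ref{mainthm} ensures that $|\partial_y(\partial_y\omega/\omega)|$ is bounded uniformly in $(t,x,y)$. The three remaining pieces are Leibniz-style sums, and the core device for each is to split the index at a threshold comparable to $N_0$: a ``low-$j$'' regime $1\le j\le N_0$, where $\partial_x^j u$ and $\partial_x^j v$ are controlled by the fixed Sobolev datum \eqref{1.5}, versus a ``high-$j$'' regime $j>N_0$, where those derivatives of $u,v$ are themselves large but the complementary factor (e.g.\ $f_{m+1-j}$ or $\partial_y f_{m-j}$) involves only a moderate number of $x$-derivatives.

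In the low-$j$ range, Sobolev embedding applied to \eqref{1.5} lets us treat $e^{cy}\partial_x^j u$ as essentially an $L^\infty_{t,x,y}$ factor with norm $\le C C_0$, after which the residual $\|\phi^\ell_m W^\ell_m \Lambda_\delta^{-2}f_{m+1-j}\|_{L^2}$ is dominated by the inductive hypothesis \eqref{2.6} at index $k=m+1-j$. The extra weight $\comii y^{-1}$ that this move inserts is absorbed into the Gaussian part of $W^\ell_m$ or, in the Type B sum, compensates the linear-in-$y$ growth of $\partial_x^j v=-\int_0^y \partial_x^{j+1}u$ supplied by Lemma~\ref{uf} together with the Poincar\'e-type Lemma~\ref{lemma4.2}. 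In the high-$j$ range, Lemma~\ref{uf} converts $\|e^{\tilde c y}\partial_x^j u\|$ and $\|\partial_x^j v\|$ into $\|W^0_k f_j\|$ for a suitable $k$, and these are bounded by $A^{j-5}((j-5)!)^{3(1+\sigma)}$ via the induction, while the low-index factor $f_{m+1-j}$ (resp.\ $\partial_y f_{m-j}$) is bounded by the initial hypothesis \eqref{15042225}.

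Collecting, the sum takes the form $\sum_{j}\binom{m}{j}A^{j-5}A^{m+1-j-5}((j-5)!)^{3(1+\sigma)}((m-j-4)!)^{3(1+\sigma)}$, which via the convexity identity $\binom{m}{j}(j!)^s((m-j)!)^s=(m!)^{s-1}\,m!$ telescopes against $((m-5)!)^{3(1+\sigma)}$ and collapses to a geometric series in $A^{-1}$. Choosing $A$ large, the whole contribution is majorized by $CA^{m-6}((m-5)!)^{3(1+\sigma)}$, which is exactly the second term on the right-hand side of the claimed bound. The additive $mC\|F\|_{L^2}$ term arises from the boundary cases $j=1,2$ in Type A, where the surviving factor is essentially $F$ itself and the binomial coefficient $\binom{m}{1}=m$ is not absorbed by the combinatorial cancellation. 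The $\Lambda^{-2/3}\partial_y$ version follows along the same lines: a preliminary commutation of $\Lambda^{-2/3}\partial_y$ past $\Lambda_\delta^{-2}\phi_m^{\ell-1}W_m^{\ell-1}$ costs only lower-order terms, controlled uniformly in $\delta$ by Lemma~\ref{lemma3.1} and by \eqref{comfm}, after which the Leibniz splitting above applies verbatim.

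The main obstacle is bookkeeping: at each step one must verify that the weight $W^\ell_m$ after absorbing the $\comii y^{-1}$ produced by Lemma~\ref{lemma4.2} still belongs to the same family (up to a harmless shift in $m$ consumed by the monotonicity \eqref{wmi}), and that all commutators of $\Lambda_\delta^{-2}$ with the $y$-dependent weights are estimated uniformly in $0<\delta<1$. A secondary subtlety is the Type C sum $\sum {m\choose j}(\partial_x^j v)(\partial_x^{m-j}u)$, whose two factors must be split symmetrically at $j=m/2$ rather than at $j=N_0$, so that regardless of $j$ one factor sits in the Sobolev regime while the other is handled inductively; the presence of the bounded coefficient $\partial_y(\partial_y\omega/\omega)\in L^\infty$ is what allows this symmetric split to close without costing additional weights.
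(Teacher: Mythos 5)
Your overall architecture — four-way decomposition of $\mathcal J_{m,\ell,\delta}$, immediate absorption of the $[\partial_y(\partial_y\omega/\omega)]f_m$ term into $C\norm{F}$, Leibniz splitting of the remaining sums, and closure via log-convexity of factorials plus a power of $A^{-1}$ — is the paper's. But the specific dichotomy you propose for the Type A and Type B sums does not close. You split at $j\approx N_0$ and claim that for $j>N_0$ the complementary factor $f_{m+1-j}$ (resp.\ $\partial_y f_{m-j}$) is ``low-index'' and controlled by the initial hypothesis \eqref{15042225}. That hypothesis only covers indices $\le N_0+1$, i.e.\ it applies only when $j\ge m-N_0$. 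In the entire middle range $N_0<j<m-N_0$ \emph{both} factors carry a large index: neither $\partial_x^j u$ (or $\partial_x^j v$) nor $f_{m+1-j}$ is controlled by a fixed a priori bound, so the mechanism ``one factor in $L^\infty$ by a uniform estimate, the other in $L^2$ by induction'' is unavailable. Your ``Collecting'' display, with $A^{j-5}A^{m+1-j-5}$, tacitly uses the induction hypothesis on both factors — but the induction hypothesis \eqref{2.6} gives only $L^2$-type control, and you never say how the product of two $L^2$ functions is put back into $L^2$. This is the actual technical content of the lemma, not bookkeeping.

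The paper's device, which you apply only to the Type C sum (and for the wrong reason — for $j\le[m/2]$ the factor $\partial_x^j v$ is generally \emph{not} in the regime of \eqref{1.5}), is needed for all three sums: first peel off the extreme cases $j=m-1,m-2$ (these, via Lemma \ref{uf}, produce the $mC\norm{F}$ term — note it comes from the \emph{top} of the Type B sum and from $j=1$ in Type A, not from ``$j=1,2$ in Type A'' alone); then split the remainder at $j=[m/2]$, estimate both factors by the induction hypothesis, and promote the factor of \emph{smaller} index to $L^\infty$ by Sobolev embedding (in $x$ alone for $j>[m/2]$, in $(x,y)$ for $j\le[m/2]$). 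The one or two extra derivatives this costs are harmless precisely because the promoted index is at most $[m/2]+O(1)$, so the shifted factorial still sits under $\inner{(m-5)!}^{3(1+\sigma)}$ after spending one factor of $A^{-1}$, and the sum over $j$ converges thanks to the leftover $j^{-2}(m-j)^{-5}$ from the binomial coefficient. Relatedly, your ``convexity identity'' $\binom{m}{j}(j!)^s((m-j)!)^s=(m!)^{s-1}m!$ is false as written; the correct ingredient is $\binom{m}{j}^{-1}\le \frac{j!(m-j)!}{m!}$ together with $(j-a)!\,(m-j-b)!\le (m-a-b)!$ for $j$ in the relevant range, and this only works once the index promoted to $L^\infty$ is known to be $\le[m/2]+O(1)$. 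So the gap is not cosmetic: without the $[m/2]$ split in Types A and B, the middle range of the sum is unestimated.
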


We first prove the first estimate in  Proposition \ref{prop5.1}.   In view of the definition given at the beginning of this section,  we see,
\begin{eqnarray}\label{adxzml}
\begin{split}
	& \norm{    \phi^{1/2}\mathcal J_{m,\ell,\delta}}_{L^2\inner{[0,T]\times\mathbb R_+^2}}\leq \norm{    \mathcal J_{m,\ell,\delta}}_{L^2\inner{[0,T]\times\mathbb R_+^2}}\\
	\leq & \sum_{j=1}^{m}{m\choose j}\norm{  \Lambda_\delta^{-2}   \phi^{\ell}_mW^\ell_m(\p^j u)f_{m+1-j}}_{L^2\inner{[0,T]\times\mathbb R_+^2}}\\
	&+\sum_{j=1}^{m-1}{m\choose j}\norm{   \Lambda_\delta^{-2}  \phi^{\ell}_mW^\ell_m(\p^j v)\partial_y f_{m-j}}_{L^2\inner{[0,T]\times\mathbb R_+^2}}\\
&+\sum_{j=1}^{m-1}{m\choose j}
\norm{  \Lambda_\delta^{-2}   \phi^{\ell}_mW^\ell_m\left[\partial_y\inner{ \partial_y\omega/\omega}\right](\p^j v)(\p^{m-j}u)}_{L^2\inner{[0,T]\times\mathbb R_+^2}}\\
&+2\norm{  \Lambda_\delta^{-2}  \phi^{\ell}_m
W^\ell_m\left[\partial_y\inner{ \partial_y\omega/\omega}\right]f_m}_{L^2\inner{[0,T]\times\mathbb R_+^2}}.
\end{split}
\end{eqnarray}
And we  will proceed to estimate the each term on the right hand side of \eqref{adxzml},  and state as the following three Lemmas.

\begin{lemma}\label{lem1}
Under the same assumption as in Proposition \ref{nonli},  we have
\begin{eqnarray*}
&&\sum_{j=1}^{m-1}{m\choose j}\norm{  \Lambda_\delta^{-2} \phi^{\ell}_m W^\ell_m(\p^j v)\partial_y f_{m-j}}_{L^2\inner{[0,T]\times\mathbb R_+^2}}\\
&\leq & mC   \norm{  \Lambda_\delta^{-2} \phi^{\ell}_m W^\ell_m f_m}_{L^2\inner{[0,T]\times\mathbb R_+^2}}+ C  A^{m-6}  \inner{(m-5)!}^{3(1+\sigma)}.
\end{eqnarray*}
\end{lemma}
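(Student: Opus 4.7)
The plan is to split the sum $\sum_{j=1}^{m-1}$ according to the size of $j$ relative to $m$, and to apply Hölder's inequality in each regime, placing in $L^\infty$ whichever of the two factors $\partial_x^j v$ and $\partial_y f_{m-j}$ has fewer tangential derivatives. More precisely, I would write
\begin{eqnarray*}
\sum_{j=1}^{m-1}{m\choose j}\bigl\|\Lambda_\delta^{-2}\phi^{\ell}_m W^\ell_m(\p^j v)\partial_y f_{m-j}\bigr\|_{L^2([0,T]\times \RR_+^2)}
= \mathcal S_1+\mathcal S_2+\mathcal S_3,
\end{eqnarray*}
where $\mathcal S_1$ collects $1\le j\le j_0$, $\mathcal S_3$ collects $m-j_0\le j\le m-1$, and $\mathcal S_2$ is the middle range, with $j_0$ a fixed integer chosen so that $j_0+1\le N_0+1$ and so that Sobolev embedding $H^{j_0}(\RR^2_+)\hookrightarrow L^\infty(\RR^2_+)$ is available.

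For $\mathcal S_1$ (small $j$): since $\p^j v=-\int_0^y\p^{j+1}u\,dy'$ and $j+1\le N_0+1$, the weighted $L^\infty$ norm $\|e^{cy}\p^j v\|_{L^\infty}$ is controlled by the a priori assumption \reff{1.5} via Sobolev embedding. I then place $\partial_y f_{m-j}$ in $L^2$ and use the induction hypothesis \reff{2.6} at $k=m-j\le m$ to obtain, after commuting $\Lambda_\delta^{-2}$ past $\p^j v$ (the cost being controlled by Lemma \ref{lemma3.1}),
\begin{eqnarray*}
\mathcal S_1\le C\sum_{j=1}^{j_0}{m\choose j}A^{m-j-5}\bigl((m-j-5)!\bigr)^{3(1+\sigma)}\le C\,A^{m-6}\bigl((m-5)!\bigr)^{3(1+\sigma)},
\end{eqnarray*}
using $(m-j-5)!\cdot m^{j}\le C((m-5)!/A)$ for $2\le j\le j_0$ since $s=3(1+\sigma)>3$, and handling $j=1$ separately where the factor $m$ is absorbed by $m\bigl((m-6)!\bigr)^s\le ((m-5)!)^s$.

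For $\mathcal S_3$ (large $j$): here $m-j\le j_0$ is small, so I place $\partial_y f_{m-j}$ (together with its $e^{cy}$ weight) in $L^\infty$ via Sobolev embedding and \reff{15042225}. The remaining factor $\p^j v$ carries the top-order information: by Lemma \ref{uf} applied with suitable exponents, the weighted $L^2$ norm of $\Lambda_\delta^{-2}\phi^{\ell}_m W^{\ell}_m \p^j v$ is bounded by $\|\Lambda_\delta^{-2}\phi^{\ell}_m W^{\ell}_m f_{j+1}\|$, up to a commutator controlled by Lemma \ref{lemma3.1}. The critical term is $j=m-1$, for which ${m\choose m-1}=m$, the Sobolev norm of $\p_y f_1$ is bounded by the initial estimate \reff{15042225}, and the factor $\p^{m-1}v$ produces exactly $\|\Lambda_\delta^{-2}\phi^{\ell}_m W^{\ell}_m f_m\|_{L^2}$, giving the announced leading term $mC\,\|\Lambda_\delta^{-2}\phi^{\ell}_m W^\ell_m f_m\|_{L^2}$. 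The remaining values $m-j_0\le j\le m-2$ produce $\|f_{j+1}\|\le A^{j-4}((j-4)!)^{3(1+\sigma)}$ (by \reff{2.6}) with a combinatorial factor ${m\choose j}\le C m^{j_0}$, which is absorbed into $C A^{m-6}((m-5)!)^{3(1+\sigma)}$ after choosing $A$ large enough.

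For $\mathcal S_2$ (middle range), both $\p^j v$ and $\partial_y f_{m-j}$ are of high order, and this is the main obstacle. Here I would split the factors as $\|\p^j v\|_{L^\infty_y L^2_x}\,\|\partial_y f_{m-j}\|_{L^2_y L^\infty_x}$ (or the mirror split) and control the $L^\infty_x$ norm by Sobolev embedding $H^{1/2+\eps}_x\hookrightarrow L^\infty_x$, which costs one extra $\Lambda^{1/2+\eps}$ and hence effectively raises the order by one. Both factors are then controlled by the induction hypothesis \reff{2.6} at orders $j+1$ and $m-j+1$ respectively, yielding
\begin{eqnarray*}
\mathcal S_2\le C\sum_{j_0<j<m-j_0}{m\choose j}A^{j-4}A^{m-j-4}\bigl((j-4)!(m-j-4)!\bigr)^{3(1+\sigma)}.
\end{eqnarray*}
The key combinatorial identity ${m\choose j}\bigl(j!\,(m-j)!\bigr)^{s-1}\le C^{m}(m!)^{s-1}$ for $s=3(1+\sigma)>1$, together with the summability of $\sum_j 1/(j(m-j))$-type series, yields a bound $C\,B^{m}((m-5)!)^{3(1+\sigma)}$ for some $B$ depending only on $s$. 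By enlarging $A$ once and for all so that $A\ge B^{6}$, this is absorbed into $C A^{m-6}((m-5)!)^{3(1+\sigma)}$.

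The hard part will be the bookkeeping in the middle range $\mathcal S_2$: one must check that the extra $\Lambda^{1/2+\eps}$ coming from Sobolev embedding in $x$ is still covered by the induction hypothesis \reff{2.6} (which controls $\Lambda^{-2(j-1)/3}\partial_y^j$ but not $\Lambda^{+\text{positive}}$), so a careful interpolation between the $L^\infty_tL^2$ bound on $\phi^0_kW^0_kf_k$ and the $L^2_tL^2$ bound on $\partial_y\phi^0_kW^0_kf_k$ is needed — possibly combined with the Poincaré-type inequality of Lemma \ref{lemma2.1} in order to upgrade $\partial_x^j v$ estimates to weighted estimates via $f_{j+1}$.
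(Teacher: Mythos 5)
Your plan is essentially the paper's own proof: the sum is split by the size of $j$, the terms $j$ near $m-1$ are peeled off to produce the $mC\,\|\Lambda_\delta^{-2}\phi^\ell_m W^\ell_m f_m\|$ contribution via Lemma \ref{uf}/\reff{equ2}, and in the remaining range H\"older is applied with the lower-order factor in $L^\infty$ (Sobolev embedding, covered by the induction hypothesis at a shifted index) and the standard $\binom{m}{j}(a!\,b!)^{s}$ factorial estimate closes the combinatorics. The worry you flag about the extra $\Lambda^{1/2+\eps}$ does not arise in the paper's version, which uses a full $x$-derivative for the $L^\infty_x$ embedding and absorbs it into $f_{k+1}$ (and likewise reduces the multiplier $\Lambda^{\ell/3}\le\Lambda^{1}$ in $W^\ell_m$ to one distributed $\partial_x$ rather than commuting it past the high-order factor, for which Lemma \ref{lemma3.1} would not apply).
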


\begin{proof} 
We first split the summation as follows:
\begin{eqnarray*}
&&\sum_{j=1}^{m-1}{m\choose j}\norm{  \Lambda_\delta^{-2} \phi^{\ell}_m W^\ell_m(\p^j v)\partial_y f_{m-j}}_{L^2\inner{[0,T]\times\mathbb R_+^2}}\\
&= &   \sum_{j=m-2}^{m-1}{m\choose j}\norm{  \Lambda_\delta^{-2} \phi^{\ell}_m W^\ell_m(\p^j v)\partial_y f_{m-j}}_{L^2\inner{[0,T]\times\mathbb R_+^2}}\\
&&+ \sum_{j=1}^{m-3}{m\choose j}\norm{  \Lambda_\delta^{-2} \phi^{\ell}_m W^\ell_m(\p^j v)\partial_y f_{m-j}}_{L^2\inner{[0,T]\times\mathbb R_+^2}}.
\end{eqnarray*}
Moreover as for the  last term on the right hand side,  we use \reff{+wmi} to compute,   \begin{eqnarray*}
 &&\norm{   \Lambda_\delta^{-2} \phi^{\ell}_m W^\ell_m(\p^{j} v)\partial_y f_{m-j}}_{L^2\inner{[0,T]\times\mathbb R_+^2}}\\
 &\leq & \norm{ \phi^{\ell}_m W^0_{m} \Lambda^{\ell /3}(\p^j v)\partial_y f_{m-j}}_{L^2\inner{[0,T]\times\mathbb R_+^2}}\\
 &\leq &\norm{ \phi^{\ell}_m W^0_{m}  (\p^j v)\partial_y f_{m-j}}_{L^2\inner{[0,T]\times\mathbb R_+^2}}+\norm{ \phi^{\ell}_m W^0_{m} \partial_x (\p^j v)\partial_y f_{m-j}}_{L^2\inner{[0,T]\times\mathbb R_+^2}}\\
 &\leq &\norm{ \phi^{\ell}_m W^0_{m}  (\p^j v)\partial_y f_{m-j}}_{L^2\inner{[0,T]\times\mathbb R_+^2}}+\norm{ \phi^{\ell}_m W^0_{m}  (\p^{j+1} v)\partial_y f_{m-j}}_{L^2\inner{[0,T]\times\mathbb R_+^2}}\\
 &&+\norm{ \phi^{\ell}_m W^0_{m}   (\p^j v)(\partial_y \partial_x f_{m-j})}_{L^2\inner{[0,T]\times\mathbb R_+^2}}.
 \end{eqnarray*}
 Thus we have
\begin{eqnarray}\label{km1}
 \begin{split}
&\sum_{j=1}^{m-1}{m\choose j}\norm{   \Lambda_\delta^{-2} \phi^{\ell}_m W^\ell_m(\p^j v)\partial_y f_{m-j}}_{L^2\inner{[0,T]\times\mathbb R_+^2}}\\
\leq&    \sum_{j=m-2}^{m-1}{m\choose j}\norm{  \Lambda_\delta^{-2} \phi^{\ell}_m W^\ell_m(\p^j v)\partial_y f_{m-j}}_{L^2\inner{[0,T]\times\mathbb R_+^2}}\\
&~+\sum_{j=1}^{m-3}{m\choose j}\norm{ \phi^{\ell}_m W^0_{m}  (\p^j v)\partial_y f_{m-j}}_{L^2\inner{[0,T]\times\mathbb R_+^2}}\\
&~+\sum_{j=1}^{m-3}{m\choose j}\norm{ \phi^{\ell}_m W^0_{m}  (\p^{j+1} v)\partial_y f_{m-j}}_{L^2\inner{[0,T]\times\mathbb R_+^2}}\\
&~+\sum_{j=1}^{m-3}{m\choose j}\norm{ \phi^{\ell}_m W^0_{m}   (\p^j v)(\partial_y \partial_x f_{m-j})}_{L^2\inner{[0,T]\times\mathbb R_+^2}}.
\end{split}
\end{eqnarray}
Next we estimate step by step the terms on the right side of \reff{km1}. 

\medskip
{\it  (a)}~ We treat in this step the first  term  on the right hand side of \reff{km1},  and prove that
\begin{eqnarray}\label{1m1}
	  &&\sum_{j=m-2}^{m-1}{m\choose j}\norm{  \Lambda_\delta^{-2} \phi^{\ell}_m W^\ell_m(\p^j v)\partial_y f_{m-j}}_{L^2\inner{[0,T]\times\mathbb R_+^2}} \nonumber\\
	  &\leq&  m\, C \norm{   \Lambda_\delta^{-2} \phi^{\ell}_m  W^\ell_{m}f_{m}}_{L^{2}\inner{[0,T]\times\mathbb R_+^2}}+C A^{m-6}\inner{(m-5)!}^{3(1+\sigma)}.
\end{eqnarray}
To do so,  direct computation 
gives
\begin{eqnarray*}
&&\sum_{j=m-2}^{m-1}{m\choose j} \norm{   \Lambda_\delta^{-2} \phi^{\ell}_m W^\ell_m(\p^{j} v)\partial_y f_{m-j}}_{L^2\inner{[0,T]\times\mathbb R_+^2}}\\
&\leq&\sum_{j=m-2}^{m-1}{m\choose j} \norm{   \Lambda_\delta^{-2} \Lambda^{\ell /3} e^{2cy}\inner{1+cy}^{-1}\phi^{\ell}_m (\p^{j} v)\partial_y f_{m-j}}_{L^2\inner{[0,T]\times\mathbb R_+^2}}\\
&\leq&\sum_{j=m-2}^{m-1}{m\choose j}  \norm{ e^{2cy} (\partial_y f_{m-j} )   \Lambda_\delta^{-2} \Lambda^{\ell /3} \inner{1+cy}^{-1} \phi^{\ell}_m\p^{j} v}_{L^2\inner{[0,T]\times\mathbb R_+^2}}\\
&&+  \sum_{j=m-2}^{m-1}{m\choose j}\norm{   \com{e^{2cy}(\partial_y f_{m-j}),~    \Lambda_\delta^{-2} \Lambda^{\ell/3}} \inner{1+cy}^{-1}\phi^{\ell}_m \p^{j} v}_{L^2\inner{[0,T]\times\mathbb R_+^2}}. 
\end{eqnarray*}
On the other hand,   by \reff{15042225},  
\begin{eqnarray*}
	&&\sum_{j=m-2}^{m-1}{m\choose j} \norm{ e^{2cy} (\partial_y f_{m-j} )   \Lambda_\delta^{-2} \Lambda^{\ell /3} \inner{1+cy}^{-1} \phi^{\ell}_m\p^{j} v}_{L^2\inner{[0,T]\times\mathbb R_+^2}}\\
	&\leq & C \sum_{j=m-2}^{m-1}{m\choose j}  \norm{(1+cy)^{-1}}_{L^2\inner{ \mathbb R_+;~   L^\infty \inner{[0,T]\times\mathbb R_x}}}\norm{   \Lambda_\delta^{-2} \phi^{\ell}_m  \Lambda^{\ell/3}\p^{j} v}_{L^{\infty}\inner{\mathbb R_+; ~L^2\inner{[0,T]\times\mathbb R_x}}}\\
	&\leq &C  \sum_{j=m-2}^{m-1}{m\choose j} \norm{   \Lambda_\delta^{-2} \phi^{\ell}_{m}  \Lambda^{\ell/3}\p^{j} v}_{L^{\infty}\inner{\mathbb R_+; ~L^2\inner{[0,T]\times\mathbb R_x}}}.\end{eqnarray*}
Similarly,  we have,   by virtue of  Lemma \ref{lemma3.1}, 
\begin{eqnarray*}
	 &&\sum_{j=m-2}^{m-1}{m\choose j} \norm{   \com{e^{2cy}(\partial_y f_{m-j}),~    \Lambda_\delta^{-2} \Lambda^{\ell/3}} \inner{1+cy}^{-1}\phi^{\ell}_m \p^{j} v}_{L^2\inner{[0,T]\times\mathbb R_+^2}} \\
	 &\leq & C \sum_{j=m-2}^{m-1}{m\choose j} \norm{    \inner{1+cy}^{-1}\phi^{\ell}_{m} \p^{j} v}_{L^2\inner{[0,T]\times\mathbb R_+^2}}  \\
&\leq & C  \sum_{j=m-2}^{m-1}{m\choose j} \norm{   \phi^{\ell}_{m} \Lambda^{\ell/3}\p^{j} v}_{L^{\infty}\inner{\mathbb R_+; ~L^2\inner{[0,T]\times\mathbb R_x}}}.
\end{eqnarray*}
Thus combining these inequalities, we obtain
\begin{eqnarray*}
&&\sum_{j=m-2}^{m-1}{m\choose j}  \norm{   \Lambda_\delta^{-2} \phi^{\ell}_m W^\ell_m(\p^{j} v)\partial_y f_{m-j}}_{L^2\inner{[0,T]\times\mathbb R_+^2}}\\
&\leq& 
C  \sum_{j=m-2}^{m-1}{m\choose j} \norm{   \Lambda_\delta^{-2} \phi^{\ell}_{m}  \Lambda^{\ell/3}\p^{j} v}_{L^{\infty}\inner{\mathbb R_+; ~L^2\inner{[0,T]\times\mathbb R_x}}}\\
&\leq& 
C  m \norm{   \Lambda_\delta^{-2} \phi^{\ell}_{m}  \Lambda^{\ell/3}\p^{m-1} v}_{L^{\infty}\inner{\mathbb R_+; ~L^2\inner{[0,T]\times\mathbb R_x}}}+C  m^2 \norm{   \Lambda_\delta^{-2} \phi^{\ell}_{m}  \Lambda^{\ell/3}\p^{m-2} v}_{L^{\infty}\inner{\mathbb R_+; ~L^2\inner{[0,T]\times\mathbb R_x}}}.
\end{eqnarray*}
Moreover,  observe 
\begin{eqnarray*}
	&&\norm{   \Lambda_\delta^{-2} \phi^{\ell}_{m}  \Lambda^{\ell/3}\p^{m-2} v}_{L^{\infty}\inner{\mathbb R_+; ~L^2\inner{[0,T]\times\mathbb R_x}}}^2 \\
	&\leq & \norm{   \Lambda_\delta^{-2} \phi^{\ell}_{m}  \Lambda^{\ell/3}\p^{m-1} v}_{L^{\infty}\inner{\mathbb R_+; ~L^2\inner{[0,T]\times\mathbb R_x}}}\norm{   \Lambda_\delta^{-2} \phi^{\ell}_{m}  \Lambda^{\ell/3}\p^{m-3} v}_{L^{\infty}\inner{\mathbb R_+; ~L^2\inner{[0,T]\times\mathbb R_x}}},
\end{eqnarray*}
and thus
\begin{eqnarray*}
	&&m^2 \norm{   \Lambda_\delta^{-2} \phi^{\ell}_{m}  \Lambda^{\ell/3}\p^{m-2} v}_{L^{\infty}\inner{\mathbb R_+; ~L^2\inner{[0,T]\times\mathbb R_x}}}\\
	&\leq &m \norm{   \Lambda_\delta^{-2} \phi^{\ell}_{m}  \Lambda^{\ell/3}\p^{m-1} v}_{L^{\infty}\inner{\mathbb R_+; ~L^2\inner{[0,T]\times\mathbb R_x}}}+m^3\norm{   \Lambda_\delta^{-2} \phi^{\ell}_{m}  \Lambda^{\ell/3}\p^{m-3} v}_{L^{\infty}\inner{\mathbb R_+; ~L^2\inner{[0,T]\times\mathbb R_x}}}\\
	&\leq & m \norm{   \Lambda_\delta^{-2} \phi^{\ell}_{m}  \Lambda^{\ell/3}\p^{m-1} v}_{L^{\infty}\inner{\mathbb R_+; ~L^2\inner{[0,T]\times\mathbb R_x}}}+m^3\norm{   \Lambda_\delta^{-2} \phi^{\ell}_{m}  \p^{m-3} v}_{L^{\infty}\inner{\mathbb R_+; ~L^2\inner{[0,T]\times\mathbb R_x}}}\\
	&&+m^3\norm{   \Lambda_\delta^{-2} \phi^{\ell}_{m}  \p^{m-2} v}_{L^{\infty}\inner{\mathbb R_+; ~L^2\inner{[0,T]\times\mathbb R_x}}}\\
	&\leq & m \norm{   \Lambda_\delta^{-2} \phi^{\ell}_{m}  \Lambda^{\ell/3}\p^{m-1} v}_{L^{\infty}\inner{\mathbb R_+; ~L^2\inner{[0,T]\times\mathbb R_x}}}+m^3\norm{    \phi^{0}_{m-2}  \p^{m-3} v}_{L^{\infty}\inner{\mathbb R_+; ~L^2\inner{[0,T]\times\mathbb R_x}}}\\
	&&+m^3\norm{     \phi^{0}_{m-1}  \p^{m-2} v}_{L^{\infty}\inner{\mathbb R_+; ~L^2\inner{[0,T]\times\mathbb R_x}}}.
\end{eqnarray*}
Then we have, combining the above inequalities, 
\begin{eqnarray*}
	&&\sum_{j=m-2}^{m-1}{m\choose j} \norm{ e^{2cy} (\partial_y f_{m-j} )   \Lambda_\delta^{-2} \Lambda^{\ell /3} \inner{1+cy}^{-1} \phi^{\ell}_m\p^{j} v}_{L^2\inner{[0,T]\times\mathbb R_+^2}}\\
	&\leq& 
C  m \norm{   \Lambda_\delta^{-2} \phi^{\ell}_{m}  \Lambda^{\ell/3}\p^{m-1} v}_{L^{\infty}\inner{\mathbb R_+; ~L^2\inner{[0,T]\times\mathbb R_x}}}+C m^3\norm{     \phi^{0}_{m-2}  \p^{m-3} v}_{L^{\infty}\inner{\mathbb R_+; ~L^2\inner{[0,T]\times\mathbb R_x}}}\\
	&&+Cm^3\norm{   \phi^{0}_{m-1}  \p^{m-2} v}_{L^{\infty}\inner{\mathbb R_+; ~L^2\inner{[0,T]\times\mathbb R_x}}}\\
	&\leq& 
C  m \norm{   \Lambda_\delta^{-2} \phi^{\ell}_{m}  W^{\ell}_m f_m }_{L^{\infty}\inner{\mathbb R_+; ~L^2\inner{[0,T]\times\mathbb R_x}}}+C m^3\norm{   \phi^{0}_{m-2}W^{0}_{m-2}  f_{m-2} }_{L^{\infty}\inner{\mathbb R_+; ~L^2\inner{[0,T]\times\mathbb R_x}}}\\
	&&+Cm^3\norm{   \phi^{0}_{m-1} W^{0}_{m-1} f_{m-1}}_{L^{\infty}\inner{\mathbb R_+; ~L^2\inner{[0,T]\times\mathbb R_x}}},
\end{eqnarray*}
the last inequality following from \reff{equ2}. This,  along with the estimate
\begin{eqnarray*}
	&&m^3\norm{   \phi^{0}_{m-2}W^{0}_{m-2}  f_{m-2} }_{L^{\infty}\inner{\mathbb R_+; ~L^2\inner{[0,T]\times\mathbb R_x}}}+ m^3\norm{   \phi^{0}_{m-1} W^{0}_{m-1} f_{m-1}}_{L^{\infty}\inner{\mathbb R_+; ~L^2\inner{[0,T]\times\mathbb R_x}}}\\
	&\leq& C A^{m-6}\inner{(m-5)!}^{3(1+\sigma)} 
\end{eqnarray*}
due to the inductive  assumption \reff{2.6}, 
gives the desired estimate \reff{1m1}.

\medskip
{\it  (b)}
We will estimate in this step  the second and the third terms on the right hand side of \reff{km1}, and prove  that
\begin{eqnarray}\label{mj1}
\begin{split}
&\sum_{j=1}^{m-3}{m\choose j}\norm{ \phi^{\ell}_m W^0_{m}(\p^{j} v)\partial_y f_{m-j}}_{L^2\inner{[0,T]\times\mathbb R_+^2}}\\
&+\sum_{j=1}^{m-3}{m\choose j}\norm{ \phi^{\ell}_m W^0_{m}(\p^{j+1} v)\partial_y f_{m-j}}_{L^2\inner{[0,T]\times\mathbb R_+^2}}\\
\leq &~C A^{m-6  }\inner{(m-5)! }^{3(1+\sigma)}.
\end{split}
\end{eqnarray}
For this purpose we write,  denoting by $[m/2]$ the largest integer less than or equal to $m/2$, 
\begin{eqnarray}\label{s1s2}
\begin{split}
& \sum_{j=1}^{m-3}{m\choose j}\norm{ \phi^{\ell}_m W^0_{m}(\p^{j+1} v)\partial_y f_{m-j}}_{L^2\inner{[0,T]\times\mathbb R_+^2}}\\
\leq &~\sum_{j=1}^{\com{m/2}}{m\choose j}\norm{ \phi^{\ell}_m W^0_{m}(\p^{j+1} v)(\partial_y   f_{m-j})}_{L^2\inner{[0,T]\times\mathbb R_+^2}}\\
&~+\sum_{j=\com{m/2}+1}^{m-3}{m\choose j}\norm{ \phi^{\ell}_m W^0_{m}(\p^{j+1} v)\partial_y f_{m-j}}_{L^2\inner{[0,T]\times\mathbb R_+^2}}\\
=&~S_1+S_2.
\end{split}
\end{eqnarray}
We first treat $S_1$.  Using the inequality
\begin{eqnarray*}
\phi^{\ell}_m\leq \phi^{0}_m \leq \phi^{0}_{j+3}\phi^{0}_{m-j}, \quad  W^0_{m}\leq W^0_{m-j}~{\rm for}~j\geq 1,
\end{eqnarray*}
gives
\begin{eqnarray}\label{s1sum}
\begin{split}
S_1&=\sum_{j=1}^{\com{m/2}}{m\choose j}\norm{\phi^{\ell}_m W^0_{m}(\p^{j+1} v)\partial_y f_{m-j}}_{L^2\inner{[0,T]\times\mathbb R_+^2}}\\
&\leq  \sum_{j=1}^{\com{m/2}}{m\choose j}\norm{\phi^{0}_{j+3} \p^{j+1} v }_{L^\infty\inner{[0,T]\times\mathbb R_+^2}}\norm{\phi^{0}_{m-j}W^0_{m-j} \partial_y f_{m-j}}_{L^2\inner{[0,T]\times\mathbb R_+^2}}.
\end{split}
\end{eqnarray} 
By Sobolev inequality, we have 
\begin{eqnarray*}
&&\norm{\phi^{0}_{j+3} \p^{j+1} v }_{L^\infty\inner{[0,T]\times\mathbb R_+^2}}\\
&\leq &C \norm{\phi^{0}_{j+3} \p^{j+1} v }_{L^\infty\inner{[0,T]\times\mathbb R_+;~L^2(\mathbb R_x)}}+C \norm{\phi^{0}_{j+3} \p^{j+2} v }_{L^\infty\inner{[0,T]\times\mathbb R_+;~L^2(\mathbb R_x)}}\\
&\leq &C \norm{\phi^{0}_{j+2} \p^{j+1} v }_{L^\infty\inner{[0,T]\times\mathbb R_+;~L^2(\mathbb R_x)}}+C\norm{\phi^{0}_{j+3} \p^{j+2} v }_{L^\infty\inner{[0,T]\times\mathbb R_+;~L^2(\mathbb R_x)}}\\
&\leq & C\norm{\phi^{0}_{j+2}W^0_{j+2} f_{j+2}   }_{L^\infty\inner{[0,T];~L^2(\mathbb R_+^2)}}+C\norm{\phi^{0}_{j+3}W^0_{j+3} f_{j+3}}_{L^\infty\inner{[0,T];~L^2(\mathbb R_+^2)}},
\end{eqnarray*}
the secomd inequality using \reff{phi} and the last inequlaity following from \reff{equ2}.    As a result,    we  use the hypothesis of induction  \eqref{2.6}  and the initial hypothesis of induction \reff{15042225}  to conclude that   if $4\leq j\leq [m/2]$ then
\begin{eqnarray*}
	\norm{\phi^{0}_{j+3} \p^{j+1} v }_{L^\infty\inner{[0,T]\times\mathbb R_+^2}} \leq C  \inner{A^{j-3  }\inner{(j-3)!}^{3(1+\sigma)}+A^{j-2  }\inner{(j-2)!}^{3(1+\sigma)} }\leq CA^{j-2  }\inner{(j-2)!}^{3(1+\sigma)},
\end{eqnarray*}
and  if $1\leq j\leq 3$
\begin{eqnarray*}
	\norm{\phi^{0}_{j+3} \p^{j+1} v }_{L^\infty\inner{[0,T]\times\mathbb R_+^2}} \leq C.
\end{eqnarray*}
Moreover,  using \reff{comfm} and also the inductive assumption \eqref{2.6},  we calculate,  for any  $1\leq j\leq [m/2],$ 
\begin{eqnarray*}
	&&\norm{\phi^{0}_{m-j}W^0_{m-j} \partial_y f_{m-j}}_{L^2\inner{[0,T]\times\mathbb R_+^2}}\\
	&\leq& \norm{ \partial_y \phi^{0}_{m-j}W^0_{m-j} f_{m-j}}_{L^2\inner{[0,T]\times\mathbb R_+^2}}+\norm{\phi^{0}_{m-j} \com{\partial_y,~W^0_{m-j} }f_{m-j}  }_{L^2\inner{[0,T]\times\mathbb R_+^2}} \\
	&\leq& \norm{ \partial_y \phi^{0}_{m-j}W^0_{m-j} f_{m-j}}_{L^2\inner{[0,T]\times\mathbb R_+^2}}+ C \norm{\phi^{0}_{m-j}  W^0_{m-j} f_{m-j}  }_{L^\infty \inner{[0,T],~L^2(\mathbb R_+^2)}} \\
	&\leq & CA^{m-j-5  }\inner{(m-j-5)! }^{3(1+\sigma)}. 
\end{eqnarray*}
Putting these inequalities into \reff{s1sum} gives
\begin{eqnarray}\label{s1}
\begin{split}
S_1 &\leq C \sum_{j=4}^{\com{m/2}} \frac{m!}{j!(m-j)!}   A^{j-2  }\inner{(j-2)!}^{3(1+\sigma)}  \bigg(A^{m-j-5  }\inner{(m-j-5)! }^{3(1+\sigma)}\bigg)\\
&\quad +  C \sum_{j=1}^{3} \frac{m!}{j!(m-j)!}    \bigg(A^{m-j-5  }\inner{(m-j-5)! }^{3(1+\sigma)}\bigg)\\
&\leq  C \sum_{j=4}^{\com{m/2}} \frac{m!}{j^2 (m-j)^5}   A^{m-7} \inner{(j-2)!}^{3(1+\sigma)-1}   \inner{(m-j-5)! }^{3(1+\sigma)-1
} \\
&\quad +  C     A^{m-6  }\inner{(m-5)! }^{3(1+\sigma)}\\
&\leq  C \sum_{j=4}^{\com{m/2}} \frac{m! }{j^2 (m/2)^5}   A^{m-7} \inner{(m-7)!}^{3(1+\sigma)-1}+  C     A^{m-6  }\inner{(m-5)! }^{3(1+\sigma)}\\
&\leq  C  (m-5)!    A^{m-7} \inner{(m-7)!}^{3(1+\sigma)-1} +  C     A^{m-6  }\inner{(m-5)! }^{3(1+\sigma)}\\
&\leq C    A^{m-6  }\inner{(m-5)! }^{3(1+\sigma)}. 
\end{split}
\end{eqnarray} 
We now treat $S_2$.  Using the inequality
\begin{eqnarray*}
\phi^{\ell}_m\leq \phi^{0}_m \leq \phi^{0}_{j+2}\phi^{0}_{m-j+1}, \quad  W^0_{m}\leq W^0_{m-j+1}~{\rm for}~j\geq 1,
\end{eqnarray*}
and thus
\begin{eqnarray}\label{s2+}
\begin{split}
S_2&=\sum_{j=\com{m/2}+1}^{m-3}{m\choose j}\norm{ \phi^{\ell}_m W^0_{m}(\p^{j+1} v)\partial_y f_{m-j}}_{L^2\inner{[0,T]\times\mathbb R_+^2}}\\
&\leq   \sum_{j=\com{{m\over2}}+1}^{m-3} {m\choose j}\norm{\phi^{0}_{j+2} \p^{j+1} v }_{L^\infty\inner{[0,T]\times\mathbb R_+;~L^2(\mathbb R_x)}}\\
&\qquad\qquad \qquad\qquad \times \norm{\phi^{0}_{m-j+1}W^0_{m-j+1} \partial_y f_{m-j}}_{L^2\inner{[0,T]\times\mathbb R_+;~L^\infty(\mathbb R_x)}}\\
&\leq   \sum_{j=\com{m/2}+1}^{m-3} {m\choose j}\norm{\phi^{0}_{j+2} W^0_{j+2}f_{j+2} }_{L^\infty\inner{[0,T];~L^2(\mathbb R_+^2)}}\\
&\qquad\qquad \qquad \qquad \times\norm{\phi^{0}_{m-j+1}W^0_{m-j+1} \partial_y f_{m-j}}_{L^2\inner{[0,T]\times\mathbb R_+;~L^\infty(\mathbb R_x)}},
\end{split}
\end{eqnarray}
the last inequality using \reff{equ2}.   As for the last factor in the above inequality, 
  we use Sobolev inequality,  \reff{phi} and \reff{wmi} to compute  
\begin{eqnarray*}
&&\norm{\phi^{0}_{m-j+1}W^0_{m-j+1} \partial_y f_{m-j}}_{L^2\inner{[0,T]\times\mathbb R_+;~L^\infty(\mathbb R_x)}}\\
&\leq &C\norm{\phi^{0}_{m-j+1}W^0_{m-j+1} \partial_y f_{m-j}}_{L^2\inner{[0,T]\times\mathbb R_+^2}}+C\norm{\phi^{0}_{m-j+1}W^0_{m-j+1} \partial_y \partial_x f_{m-j}}_{L^2\inner{[0,T]\times\mathbb R_+^2}}\\
&\leq &C \norm{\phi^{0}_{m-j}W^0_{m-j} \partial_y f_{m-j}}_{L^2\inner{[0,T]\times\mathbb R_+^2}}+C\norm{\phi^{0}_{m-j+1}W^0_{m-j+1} \partial_y \partial_x f_{m-j}}_{L^2\inner{[0,T]\times\mathbb R_+^2}}.
\end{eqnarray*}
On the other hand,   in view of the definition  of $f_m,$ we have
\begin{eqnarray*}
	 &&\norm{\phi^{0}_{m-j+1}W^0_{m-j+1} \partial_y \partial_x f_{m-j}}_{L^2\inner{[0,T]\times\mathbb R_+^2}}\\
	 &\leq &\norm{\phi^{0}_{m-j+1}W^0_{m-j+1} \partial_y   f_{m-j+1}}_{L^2\inner{[0,T]\times\mathbb R_+^2}}+\norm{\phi^{0}_{m-j+1}W^0_{m-j+1} \partial_y (\partial_x^{m-j }u)\partial_x\inner{(\partial_y\omega)/\omega}}_{L^2\inner{[0,T]\times\mathbb R_+^2}}\\
	 &\leq &\norm{\phi^{0}_{m-j+1}W^0_{m-j+1} \partial_y   f_{m-j+1}}_{L^2\inner{[0,T]\times\mathbb R_+^2}}+\norm{\phi^{0}_{m-j+1}W^0_{m-j+1}   (\partial_x^{m-j }\omega)\partial_x\inner{(\partial_y\omega)/\omega}}_{L^2\inner{[0,T]\times\mathbb R_+^2}}\\
	 &&+\norm{\phi^{0}_{m-j+1}W^0_{m-j+1}   (\partial_x^{m-j }u)\partial_x\partial_y \inner{(\partial_y\omega)/\omega}}_{L^2\inner{[0,T]\times\mathbb R_+^2}}\\
	 &\leq &C\norm{\phi^{0}_{m-j+1}W^0_{m-j+1} \partial_y   f_{m-j+1}}_{L^2\inner{[0,T]\times\mathbb R_+^2}}+C\norm{\comii y^{-1} \phi^{0}_{m-j}W^0_{m-j}   \partial_x^{m-j }\omega }_{L^2\inner{[0,T]\times\mathbb R_+^2}}\\
	 &&+C\norm{\comii y^{-1}\phi^{0}_{m-j}W^0_{m-j}   \partial_x^{m-j }u }_{L^2\inner{[0,T]\times\mathbb R_+^2}},
\end{eqnarray*}
the last inequality using  \reff{phi} and \reff{wmi}.   Combining these inequalities, we conclude 
\begin{eqnarray*}
	&&\norm{\phi^{0}_{m-j+1}W^0_{m-j+1} \partial_y f_{m-j}}_{L^2\inner{[0,T]\times\mathbb R_+;~L^\infty(\mathbb R_x)}}\\
	&\leq &C\norm{\phi^{0}_{m-j}W^0_{m-j} \partial_y   f_{m-j}}_{L^2\inner{[0,T]\times\mathbb R_+^2}}+C\norm{\phi^{0}_{m-j+1}W^0_{m-j+1} \partial_y   f_{m-j+1}}_{L^2\inner{[0,T]\times\mathbb R_+^2}}\\
	 &&+C\norm{\comii y^{-1} \phi^{0}_{m-j}W^0_{m-j}   \partial_x^{m-j }\omega }_{L^2\inner{[0,T]\times\mathbb R_+^2}}+C\norm{\comii y^{-1}\phi^{0}_{m-j}W^0_{m-j}   \partial_x^{m-j }u }_{L^2\inner{[0,T]\times\mathbb R_+^2}}\\
	&\leq &C\norm{\partial_y \phi^{0}_{m-j}W^0_{m-j}    f_{m-j}}_{L^2\inner{[0,T]\times\mathbb R_+^2}}+C\norm{ \partial_y  \phi^{0}_{m-j+1}W^0_{m-j+1} f_{m-j+1}}_{L^2\inner{[0,T]\times\mathbb R_+^2}}\\
	 &&+C\norm{  \phi^{0}_{m-j}W^0_{m-j}  f_{m-j }}_{L^2\inner{[0,T]\times\mathbb R_+^2}}+C\norm{  \phi^{0}_{m-j+1}W^0_{m-j+1}  f_{m-j +1}}_{L^2\inner{[0,T]\times\mathbb R_+^2}},
\end{eqnarray*}
where the last inequality follows from  \reff{equ1+} and \reff{comfm}.    This, along with the inductive assumptions  \eqref{2.6},  yields, if  $\com{m/2}+1\leq j\leq m-4$ then
   \begin{eqnarray*}
	&&\norm{\phi^{0}_{m-j+1}W^0_{m-j+1} \partial_y f_{m-j}}_{L^2\inner{[0,T]\times\mathbb R_+;~L^\infty(\mathbb R_x)}}\\
	&\leq &CA^{m-j-5  }\inner{(m-j-5)! }^{3(1+\sigma)}+CA^{m-j-4  }\inner{(m-j-4)! }^{3(1+\sigma)}\\
	&\leq & CA^{m-j-4  }\inner{(m-j-4)! }^{3(1+\sigma)}, 
\end{eqnarray*}
and  if $  j=  m-3$ then 
  \begin{eqnarray*}
	\norm{\phi^{0}_{m-j+1}W^0_{m-j+1} \partial_y f_{m-j}}_{L^2\inner{[0,T]\times\mathbb R_+;~L^\infty(\mathbb R_x)}}\leq C  
\end{eqnarray*}
due to the initial hypothesis of induction \reff{15042225}.  On the other hand,    the inductive assumptions  \eqref{2.6} yields, for any $\com{m/2}+1\leq j\leq m-3,$
\begin{eqnarray*}
	\norm{\phi^{0}_{j+2} W^0_{j+2}f_{j+2} }_{L^\infty\inner{[0,T];~L^2(\mathbb R_+^2)}}\leq A^{j-3  }\inner{(j-3)! }^{3(1+\sigma)}.
\end{eqnarray*}
Putting these estimates  into \reff{s2+},  we have
\begin{eqnarray*}
S_2 
&\leq & C\sum_{j=\com{m/2}+1}^{m-4}  \frac{m!}{j!(m-j)!}   A^{j-3  }\big((j-3)!\big)^{3(1+\sigma)}  \bigg(A^{m-j-4  }\inner{(m-j-4)! }^{3(1+\sigma)}\bigg) \\
&&+C\sum_{j=m-3}^{m-3}  \frac{m!}{j!(m-j)!}   A^{j-3  }\big((j-3)!\big)^{3(1+\sigma)} \\
&\leq &C \sum_{j=\com{m/2}+1}^{m-4} \frac{m!}{j^3 (m-j)^4}   A^{m-7} \big((j-3)!\big)^{3(1+\sigma)-1}   \inner{(m-j-4)! }^{3(1+\sigma)-1
}\\
&&+C    A^{m-6  }\big((m-5)!\big)^{3(1+\sigma)} \\
&\leq & C \sum_{j=\com{m/2}+1}^{m-4} \frac{m! }{ (m/2)^3(m-j)^4}   A^{m-7} \inner{(m-7)!}^{3(1+\sigma)-1}+C    A^{m-6  }\big((m-5)!\big)^{3(1+\sigma)}\\
&\leq & C  (m-3)!    A^{m-7} \inner{(m-7)!}^{3(1+\sigma)-1}+C    A^{m-6  }\big((m-5)!\big)^{3(1+\sigma)} \\
&\leq &C    A^{m-6  }\inner{(m-5)! }^{3(1+\sigma)}.
\end{eqnarray*}
This  along with \reff{s1} and \reff{s1s2}  yields 
\begin{eqnarray*}
\sum_{j=1}^{m-3}{m\choose j}\norm{ \phi^{\ell}_m W^0_{m}(\p^{j+1} v)\partial_y f_{m-j}}_{L^2\inner{[0,T]\times\mathbb R_+^2}}\leq C    A^{m-6  }\inner{(m-5)! }^{3(1+\sigma)}. 
\end{eqnarray*}
Similarly, we have
\begin{eqnarray*}
\sum_{j=1}^{m-3}{m\choose j}\norm{ \phi^{\ell}_m W^0_{m}(\p^{j} v)\partial_y f_{m-j}}_{L^2\inner{[0,T]\times\mathbb R_+^2}}\leq C    A^{m-6  }\inner{(m-5)! }^{3(1+\sigma)}. 
\end{eqnarray*}
Then  the desired estimate \reff{mj1} follows.

\medskip
{\it (c)} It remains to prove that 
\begin{eqnarray}\label{fiest}
	\sum_{j=1}^{m-3}{m\choose j}\norm{ \phi^{\ell}_m W^0_{m}   (\p^j v)(\partial_y \partial_x f_{m-j})}_{L^2\inner{[0,T]\times\mathbb R_+^2}}\leq C    A^{m-6  }\inner{(m-5)! }^{3(1+\sigma)}.
\end{eqnarray}
The proof is quite similar as in the previous step.  To do so we first 
write
\begin{eqnarray*}
	&& \sum_{j=1}^{m-3}{m\choose j}\norm{ \phi^{\ell}_m W^0_{m}   (\p^j v)(\partial_y \partial_x f_{m-j})}_{L^2\inner{[0,T]\times\mathbb R_+^2}}\\
&= &~\sum_{j=1}^{\com{m/2}}{m\choose j}\norm{ \phi^{\ell}_m W^0_{m}(\p^{j} v)(\partial_y\partial_x   f_{m-j})}_{L^2\inner{[0,T]\times\mathbb R_+^2}}\\
&&+\sum_{j=\com{m/2}+1}^{m-3}{m\choose j}\norm{ \phi^{\ell}_m W^0_{m}(\p^{j} v)(\partial_y\partial_x f_{m-j})}_{L^2\inner{[0,T]\times\mathbb R_+^2}}\\
&=&~\tilde S_1+\tilde S_2.
\end{eqnarray*}
For the term $\tilde S_1$,  we use  
	\begin{eqnarray*}
\phi^{\ell}_m\leq \phi^{0}_m \leq \phi^{0}_{j+2}\phi^{0}_{m-j+1}, \quad  W^0_{m}\leq W^0_{m-j+1}~{\rm for}~j\geq 2,
\end{eqnarray*}
to obtain
\begin{eqnarray*}
	\tilde S_1 &\leq &\sum_{j=1}^{\com{m/2}}{m\choose j}\norm{\phi^{0}_{j+2} \p^{j} v }_{L^\infty\inner{[0,T]\times\mathbb R_+^2}}\norm{\phi^{0}_{m-j+1}W^0_{m-j+1} \partial_y \partial_xf_{m-j}}_{L^2\inner{[0,T]\times\mathbb R_+^2}}.
\end{eqnarray*}
Then repeating the arguments used to estimate  $S_1$ and $S_2$ in the previous step,  we 
can deduce that
\begin{eqnarray*}
	\tilde S_1\leq   C  A^{m-6} \inner{(m-6)!}^{3(1+\sigma)}. 
\end{eqnarray*}
As for $\tilde S_2$,  using the inequality
\begin{eqnarray*}
\phi^{\ell}_m\leq \phi^{0}_m \leq \phi^{0}_{j+1}\phi^{0}_{m-j+2}, \quad  W^0_{m}\leq W^0_{m-j+2}~{\rm for}~j\geq 2,
\end{eqnarray*}
gives
\begin{eqnarray*}
\tilde S_2 
\leq   \sum_{j=\com{m/2}+1}^{m-3} {m\choose j}\norm{\phi^{0}_{j+1} \p^{j} v }_{L^\infty\inner{[0,T]\times\mathbb R_+;~L^2(\mathbb R_x)}}\norm{\phi^{0}_{m-j+2}W^0_{m-j+2} \partial_y\partial_x f_{m-j}}_{L^2\inner{[0,T]\times\mathbb R_+;~L^\infty(\mathbb R_x)}}.\end{eqnarray*}
 Then repeating the arguments used to estimate $S_2$ in the previous step, we have
\begin{eqnarray*}
	\tilde S_2\leq   C    A^{m-6  }\inner{(m-5)! }^{3(1+\sigma)}.
\end{eqnarray*}
This along with the estimate on $\tilde S_1$ yields  \reff{fiest}.   Finally,  combining \reff{km1}, \reff{1m1}, \reff{mj1} and \reff{fiest} gives the desired estimate in Lemma \ref{lem1}, and thus the proof is   complete.  
\end{proof}

\begin{lemma}\label{lem2}
Under the same assumption as in Proposition \ref{nonli},  we have
\begin{eqnarray*}
&&\sum_{j=1}^{m}{m\choose j}\norm{  \Lambda_\delta^{-2}   \phi^{\ell}_mW^\ell_m(\p^j u)f_{m+1-j}}_{L^2\inner{[0,T]\times\mathbb R_+^2}}\\&&\qquad+\sum_{j=1}^{m-1}{m\choose j}
\norm{  \Lambda_\delta^{-2}   \phi^{\ell}_mW^\ell_m\left[\partial_y\inner{ \partial_y\omega/\omega}\right](\p^j v)(\p^{m-j}u)}_{L^2\inner{[0,T]\times\mathbb R_+^2}}\\
&\leq &m\,C    \norm{  \Lambda_\delta^{-2} \phi^{\ell}_m W^\ell_m f_m}_{L^2\inner{[0,T]\times\mathbb R_+^2}}+ C  A^{m-6}  \inner{(m-5)!}^{3(1+\sigma)}.
\end{eqnarray*}
\end{lemma}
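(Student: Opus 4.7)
The plan is to mimic the splitting strategy used in Lemma \ref{lem1}: separate the two sums into a block of indices close to $m$ (where $\partial_x^j u$ has top order) and a block of indices close to $1$ (where $f_{m+1-j}$ or $\partial_x^{m-j}u$ has top order), and in each block put the factor of low order in $L^\infty_{t,x,y}$ via Sobolev embedding while leaving the factor of top order in $L^2$. For the top-order factor we use Lemma \ref{uf} and Lemma \ref{lemma4.2} to convert $\partial_x^N u$ or $\partial_x^N v$ back into $f_N$ together with the weight $W^0_N$, so that the inductive assumption \eqref{2.6} is applicable.

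More precisely, write each sum as $\Sigma^{\rm top}+\Sigma^{\rm bot}$ according to $j \geq [m/2]+1$ or $j\leq [m/2]$. For the first sum, the terms corresponding to $j=m, m-1, m-2$ are isolated and extracted: by $f_1, f_2, f_3 \in L^\infty([0,T]\times\RR_+^2)$ (consequence of \eqref{15042225} plus Sobolev embedding) and by Lemma \ref{lemma4.2} together with \reff{+wmi}, these terms are bounded by $m\,C\|\Lambda_\delta^{-2}\phi^\ell_m W^\ell_m f_m\|_{L^2}$ after we absorb $\langle y\rangle^{-1}$ into one of the decaying factors (possible because $f_1$ inherits decay from \reff{fmfun+} and \eqref{1.3}); this gives the main $m\,C\|\cdots\|$ contribution on the right-hand side. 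For all remaining $j$ in the first sum, and for every term of the second sum (where the extra factor $\partial_y(\partial_y\omega/\omega)$ contributes an $\langle y\rangle^{-2}$ decay, which is more than enough to kill the weight loss), we use the splitting
\[
\phi^\ell_m\le \phi^0_{j+p}\,\phi^0_{m-j+q}, \qquad W^\ell_m\le W^0_{\min(j+p,m-j+q)},
\]
for appropriate small integers $p,q$, together with Lemma \ref{uf} to convert $\partial_x^{N} u$ or $\partial_x^{N}v$ into a norm of $W^0_{N} f_{N}$, and finally Sobolev embedding in $x$ (costing one extra $\partial_x$ and hence bumping the index by $1$) to place the low-order factor in $L^\infty$. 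The inductive hypothesis \eqref{2.6} then yields a bound
\[
\frac{m!}{j!(m-j)!}\;A^{j-p'}\bigl((j-p')!\bigr)^{3(1+\sigma)}\;A^{m-j-q'}\bigl((m-j-q')!\bigr)^{3(1+\sigma)}
\]
on each summand, with $p',q'$ some small constants. Finally, a combinatorial lemma of the exact shape used in Lemma \ref{lem1} (the key point being that $\binom{m}{j}(j-p')!\,(m-j-q')! \le C\,(m-5)!/[j^3(m-j)^3]$ for a suitable choice of $p',q'$) enables us to sum over $j$ and conclude the bound $C\,A^{m-6}((m-5)!)^{3(1+\sigma)}$.

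The main technical obstacle, as in Lemma \ref{lem1}, will be the careful bookkeeping on the weights $\phi^\ell_m, W^\ell_m$ and on the powers of $A$ and factorials: specifically, verifying that for each term which is \emph{not} one of the isolated top indices $j=m,m-1,m-2$, we can afford to lose two unit shifts in $m$ (one for Sobolev embedding, one from the definition \reff{fmfun+} of $f$ in terms of $u,\omega$) while still fitting into the bound $C\,A^{m-6}((m-5)!)^{3(1+\sigma)}$. For the second sum the decay $|\partial_y(\partial_y\omega/\omega)|\le C\langle y\rangle^{-2}$ from \eqref{1.3} is essential in order to commute with $W^\ell_m$ without weight loss (using Lemma \ref{lemma3.1} with $a=\partial_y(\partial_y\omega/\omega)\langle y\rangle^{\sigma}$), and the extra $\langle y\rangle^{-1}$-type decay compensates for the absence of a derivative $\partial_y$ compared with the treatment in Lemma \ref{lem1}. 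Once these accounting steps are carried out, the remainder of the argument is parallel to that of Lemma \ref{lem1} and does not introduce new ideas.
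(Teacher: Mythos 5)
Your proposal is correct and takes the approach the paper intends: the paper omits this proof entirely, saying only that it is ``quite similar'' to Lemma \ref{lem1}, and your plan --- isolating the near-diagonal indices $j=m,m-1,m-2$ to produce the $m\,C\,\|\Lambda_\delta^{-2}\phi^\ell_m W^\ell_m f_m\|$ term, splitting the remaining sums at $j=[m/2]$, converting the top-order factor into $f_N$ via Lemma \ref{uf} and Lemma \ref{lemma4.2}, and running the same combinatorial summation --- is exactly that adaptation. The only quibble is that \eqref{1.3} yields $|\partial_y(\partial_y\omega/\omega)|\leq C\comii{y}^{-1}$ rather than $\comii{y}^{-2}$ (the term $\partial_y^2\omega/\omega$ decays only like $\comii{y}^{-1}$, and the decay of $f_1$ really comes from \eqref{1.5} rather than \eqref{1.3}), but $\comii{y}^{-1}$ is precisely the decay needed to invoke \eqref{equ1+}, so nothing breaks.
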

 The proof of this Lemma is quite similar as in Lemma \ref{lem1}, so we omit it. 

\begin{lemma}\label{lem3}
Under the same assumption as in Proposition \ref{nonli},  we have
	\begin{eqnarray*}
 2\norm{  \Lambda_\delta^{-2}  \phi^{\ell}_m
W^\ell_m\left[\partial_y\inner{ (\partial_y\omega)/\omega}\right]f_m}_{L^2\inner{[0,T]\times\mathbb R_+^2}} \leq  C\norm{  \Lambda_\delta^{-2}  \phi^{\ell}_m
W^\ell_m f_m}_{L^2\inner{[0,T]\times\mathbb R_+^2}}.
	\end{eqnarray*}

\end{lemma}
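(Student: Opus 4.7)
The key observation is that, setting $b(t,x,y):=\partial_y\bigl((\partial_y\omega)/\omega\bigr)$, this function is uniformly bounded together with its tangential $x$-derivatives. Indeed, a direct expansion gives
\begin{equation*}
b=\frac{\partial_y^3 u}{\partial_y u}-\frac{(\partial_y^2 u)^2}{(\partial_y u)^2},
\end{equation*}
and the hypothesis \eqref{1.3} immediately yields $|b|\le C_*^2\comii y^{-1}+C_*^4\comii y^{-2}\le C$. To control $\partial_x^j b$ for $j=1,2$, I would differentiate this expression and note that each resulting factor is either bounded by \eqref{1.3} or by the tangential derivatives $\partial_x u,\partial_x\partial_y u$ of \eqref{1.5}; since $N_0\ge 7$, Sobolev embedding in $\mathbb R^2_+$ turns \eqref{1.5} into pointwise bounds on $\partial_x^j\partial_y^k u$ for small $j,k$, giving $b\in C^2_b([0,T]\times\overline{\mathbb R^2_+})$ with a constant depending only on $C_0,C_*,\sigma$.

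Next I would exploit the factorization $W_m^\ell=M_m^\ell(y)\Lambda^{\ell/3}$, where $M_m^\ell(y)$ denotes the multiplication-in-$y$ part of the weight \eqref{wmi+++++}. Since $b$ commutes with $M_m^\ell$ and with the $t$-function $\phi_m^\ell$, I can write
\begin{equation*}
\Lambda_\delta^{-2}\phi_m^\ell W_m^\ell(b f_m)
=b\,\Lambda_\delta^{-2}\phi_m^\ell W_m^\ell f_m
+\bigl[\Lambda_\delta^{-2}\Lambda^{\ell/3},\,b\bigr]\phi_m^\ell M_m^\ell f_m.
\end{equation*}
The first summand is bounded in $L^2([0,T]\times\mathbb R_+^2)$ by $\|b\|_{L^\infty}\|\Lambda_\delta^{-2}\phi_m^\ell W_m^\ell f_m\|_{L^2}$, which is of the desired form. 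For the commutator term I would invoke Lemma \ref{lemma3.1} with $\tau=\ell/3\in[0,1]$ and $a=b$ (which is uniformly in $C^2_b$ by the previous step), obtaining
\begin{equation*}
\bigl\|\bigl[b,\,\Lambda_\delta^{-2}\Lambda^{\ell/3}\bigr]\phi_m^\ell M_m^\ell f_m\bigr\|_{L^2}
\le C\,\bigl\|\Lambda^{\ell/3-1}\Lambda_\delta^{-2}\phi_m^\ell M_m^\ell f_m\bigr\|_{L^2}
\le C\,\bigl\|\Lambda_\delta^{-2}\phi_m^\ell M_m^\ell f_m\bigr\|_{L^2},
\end{equation*}
where the last inequality uses $\ell/3-1\le 0$ so that $\Lambda^{\ell/3-1}$ is bounded on $L^2$ uniformly in $\delta$.

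Finally, to reinsert the missing $\Lambda^{\ell/3}$ factor I would use the fact that $M_m^\ell(y)$ commutes with every $x$-Fourier multiplier, which gives
\begin{equation*}
\Lambda_\delta^{-2}\phi_m^\ell M_m^\ell f_m=\Lambda^{-\ell/3}\Lambda_\delta^{-2}\phi_m^\ell M_m^\ell\Lambda^{\ell/3}f_m=\Lambda^{-\ell/3}\Lambda_\delta^{-2}\phi_m^\ell W_m^\ell f_m,
\end{equation*}
so that $\|\Lambda_\delta^{-2}\phi_m^\ell M_m^\ell f_m\|_{L^2}\le\|\Lambda_\delta^{-2}\phi_m^\ell W_m^\ell f_m\|_{L^2}$. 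Combining the three estimates yields the lemma. The only real subtlety I anticipate is checking that the constant in the commutator estimate of Lemma \ref{lemma3.1} depends only on $\|b\|_{C^2_b}$ (and hence only on $C_0,C_*,\sigma$, independent of $m$ and $\delta$), which is exactly what the cited pseudo-differential lemma delivers.
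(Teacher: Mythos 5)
Your proposal is correct and follows essentially the same route as the paper: split off the bounded factor $\partial_y\bigl((\partial_y\omega)/\omega\bigr)$ and control the remaining commutator with the Fourier multiplier part of $\Lambda_\delta^{-2}W_m^\ell$ via Lemma \ref{lemma3.1}, using that $\Lambda^{\ell/3-1}$ loses nothing for $\ell\le 3$. Your explicit verification that $b\in C^2_b$ (uniformly, via \eqref{1.3} and \eqref{1.5} with Sobolev embedding) is a detail the paper leaves implicit, but the argument is the same.
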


\begin{proof}
	This is a just direct verification.  Indeed,   Lemma \ref{lemma3.1} gives
	\begin{eqnarray*}
	&&	\norm{  \Lambda_\delta^{-2}  \phi^{\ell}_m
W^\ell_m\left[\partial_y\inner{ (\partial_y\omega)/\omega}\right]f_m}_{L^2\inner{[0,T]\times\mathbb R_+^2}}\\
&\leq &\norm{\left[\partial_y\inner{ (\partial_y\omega)/\omega}\right]   \Lambda_\delta^{-2}  \phi^{\ell}_m
W^\ell_mf_m}_{L^2\inner{[0,T]\times\mathbb R_+^2}}+\norm{\com{ \partial_y\inner{ (\partial_y\omega)/\omega},~  \Lambda_\delta^{-2}  
W^\ell_m}\phi^{\ell}_m f_m}_{L^2\inner{[0,T]\times\mathbb R_+^2}}\\
&\leq &C\norm{   \Lambda_\delta^{-2}  \phi^{\ell}_m
W^\ell_mf_m}_{L^2\inner{[0,T]\times\mathbb R_+^2}}.
	\end{eqnarray*}
Then the desired estimate follows and thus the proof of Lemma \ref{lem3} is complete. 
\end{proof}

\begin{proof}[Proof of Proposition \ref{prop5.1}]
	 In view of \reff{adxzml},  we combine  the estimates in  Lemma \ref{lem1}-Lemma \ref{lem3},  to get the first estimate in Proposition \ref{prop5.1}.   The second one can be treated quite similarly and the main difference is that we will use here additionally the inductive estimates on the terms of the following form 
\begin{eqnarray*}
	\norm{\partial_y^2\Lambda^{-2/3}\phi^{0}_j  W^{0}_j f_j}_{L^2([0,T]\times\RR^2_+)},\quad 6\leq j\leq m,
\end{eqnarray*}
 while in the proof  of   Lemma \ref{lem1},   we only used the  estimates on the following two forms
\begin{eqnarray*}
	\norm{ \phi^{0}_j  W^{0}_j f_j}_{L^\infty([0,T];~L^2(\RR^2_+))},\quad \norm{\partial_y \phi^{0}_j  W^{0}_j f_j}_{L^2([0,T]\times\RR^2_+)},\quad  6 \leq j\leq m.
\end{eqnarray*} 
So we omit the treatment of the second estimate for brevity,  and  thus the proof of Proposition \ref{prop5.1} is complete. 
\end{proof}

\begin{proof}[{\bf Completeness of  the proof of Proposition \ref{nonli}}]
The estimates \reff{2.20}   follows from the combination of Propostion \ref{+prp5.1+} and the first estimate in Proposition \ref{prop5.1},  while the  estimate \eqref{2.21} in Proposition \ref{nonli} follows from  Propostion \ref{+prp5.1+} and the second estimate in Proposition \ref{prop5.1}.   The treatment of   \reff{+2.20++} is exactly the same as   \reff{2.20}.   The proof of Proposition \ref{nonli} is thus complete. 
\end{proof}


\section{Appendix}\label{apen}
Here we deduce the equation fulfilled by $f_m$ (cf.  \cite{masmoudi-1}).  
 Recall that 
\begin{equation*}
f_m=\partial_x^m \omega-\frac{\partial_y \omega}{\omega}\partial_x^m u,\quad m\geq 1,
\end{equation*}
where $u$ is a smooth solution to Prandtl equation \reff{prandtl1} and $\omega=\partial_y u$.  We will verify that   
\begin{eqnarray}\label{eqnfms}
\partial_t f_m+u\p f_m+v\partial_y f_m-\partial_y^2 f_m=\mathcal{Z}_m, 
\end{eqnarray}
where
\begin{eqnarray*}
\mathcal{Z}_m&=&-\sum_{j=1}^m{m\choose j}(\p^j u) f_{m+1-j}
-\sum_{j=1}^{m-1}{m\choose j}(\p^j v)(\partial_y f_{m-j})\\
&&-\left[\partial_y\inner{\frac{\partial_y\omega}{\omega}}\right]\sum_{j=1}^{m-1}{m\choose j}
(\p^j v)(\p^{m-j}u)
-2\left[\partial_y\inner{\frac{\partial_y\omega}{\omega}}\right]f_m.
\end{eqnarray*}
To do so,   we firstly notice that  
\begin{eqnarray}\label{ueqy}
	 u_t + u u_x + vu_y 
- u_{yy}=0, 
\end{eqnarray}
and 
\begin{eqnarray*} 	
 \omega_t + u \omega _x + v\omega _y - \omega_{yy}=0. 
\end{eqnarray*}
Thus by Leibniz's formula, 
$\p^m u, \partial_x^m \omega$  satisfy, respectively,  the following equation
\begin{eqnarray}\label{eqel2}
\begin{split}
&\partial_t\p^m u+u\p\p^m u+v\partial_y\p^m u-\partial_y^2\p^m u\\
=&-\sum_{j=1}^{m}{m\choose j}(\p^j u)(\p^{m-j+1}u)-\sum_{j=1}^{m}(\p^j v)(\partial_y\p^{m-j}u)\\
=&-\sum_{j=1}^{m}{m\choose j}(\p^j u)(\p^{m-j+1}u)-\sum_{j=1}^{m-1}(\p^j v)(\partial_y\p^{m-j}u)
-(\p^m v)(\partial_y u)
\end{split}
\end{eqnarray}
and
\begin{eqnarray}\label{eqel1}
\begin{split}
&\partial_t\p^m\omega+u\p\p^m\omega+v\partial_y\p^m\omega-\partial_y^2\p^m\omega\\
=&-\sum_{j=1}^{m}{m\choose j}(\p^j u)(\p^{m-j+1}\omega)-\sum_{j=1}^{m}(\p^j v)(\partial_y\p^{m-j}\omega)\\
=&-\sum_{j=1}^{m}{m\choose j}(\p^j u)(\p^{m-j+1}\omega)-\sum_{j=1}^{m-1}(\p^j v)(\partial_y\p^{m-j}\omega)
-(\p^m v)(\partial_y\omega).
\end{split}
\end{eqnarray}
In order to  eliminate the last terms on the right sides of the above two equations,  we  observe $\partial_y u=\omega>0$ and thus  multiply \reff{eqel2} by  $-\frac{\partial_y\omega}{\omega}$,  and then add the resulting equation to \reff{eqel1};  this gives  
\begin{eqnarray*}
\partial_t f_m+u\p f_m+v\partial_y f_m-\partial_y^2 f_m=\mathcal{Z}_m
\end{eqnarray*}
where
\begin{eqnarray*}
\mathcal{Z}_m&=&-\sum_{j=1}^m{m\choose j}(\p^j u) f_{m+1-j}
-\sum_{j=1}^{m-1}{m\choose j}(\p^j v)(\partial_y f_{m-1})\\
&&-\bigg[\partial_y\inner{\frac{\partial_y\omega}{\omega}}\bigg]\sum_{j=1}^{m-1}{m\choose j}
(\p^j v)(\p^{m-j}u)+(\p^m u)f_1\\
&&+\inner{\partial_t\inner{\frac{\partial_y\omega}{\omega} }
+u\p\inner{\frac{\partial_y\omega}{\omega} }
+v\partial_y\inner{\frac{\partial_y\omega}{\omega} }
-\partial^2_y\inner{\frac{\partial_y\omega}{\omega} } }\p^m u\\
&&-2\left[\partial_y\inner{\frac{\partial_y\omega}{\omega} }\right]\partial_y\p^m u.
\end{eqnarray*}
On the other hand we notice that
\begin{eqnarray*}
&&\partial_t\inner{\frac{\partial_y\omega}{\omega} }
+u\p\inner{\frac{\partial_y\omega}{\omega} }
+v\partial_y\inner{\frac{\partial_y\omega}{\omega} }
-\partial^2_y\inner{\frac{\partial_y\omega}{\omega} }\\
&=&\frac{1}{\omega}\inner{\partial_t\partial_y\omega
+u\p\partial_y\omega+v\partial_y\partial_y\omega
-\partial^2_y\partial_y\omega }\\
&&-\frac{\partial_y\omega}{\omega^2}
\inner{\partial_t\omega+u\p\omega+v\partial_y\omega-
\partial_y^2\omega }
+2\frac{\partial_y\omega}{\omega}\partial_y\inner{\frac{\partial_y\omega}{\omega} }\\
&=&-\p\omega+\frac{(\p u)(\partial_y\omega)}{\omega}+
2\frac{\partial_y\omega}{\omega}\partial_y\inner{\frac{\partial_y\omega}{\omega} }
\end{eqnarray*}
Therefore we have
\begin{eqnarray*}
\mathcal{Z}_m&=&-\sum_{j=1}^m{m\choose j}(\p^j u) f_{m+1-j}
-\sum_{j=1}^{m-1}{m\choose j}(\p^j v)(\partial_y f_{m-1})\\
&&-\left[\partial_y\inner{\frac{\partial_y\omega}{\omega}}\right]\sum_{j=1}^{m-1}{m\choose j}
(\p^j v)(\p^{m-j}u)+(\p^m u)f_1\\
&&+\inner{\p\omega-\frac{(\p u)(\partial_y\omega)}{\omega} }\p^m u
+2\frac{\partial_y\omega}{\omega}\partial_y\inner{\frac{\partial_y\omega}{\omega} }
\p^m u-2\left[\partial_y\inner{\frac{\partial_y\omega}{\omega} }\right]\partial_y\p^m u\\
&=&-\sum_{j=1}^m{m\choose j}(\p^j u) f_{m+1-j}
-\sum_{j=1}^{m-1}{m\choose j}(\p^j v)(\partial_y f_{m-1})\\
&&-\left[\partial_y\inner{\frac{\partial_y\omega}{\omega}}\right]\sum_{j=1}^{m-1}{m\choose j}
(\p^j v)(\p^{m-j}u)+
\left[\partial_y\Big(\frac{\partial_y\omega}{\omega}\Big)^2 \right]
\p^m u\\
&&-2\left[\partial_y\inner{\frac{\partial_y\omega}{\omega} }\right]\p^m\omega\\
&=&-\sum_{j=1}^m{m\choose j}(\p^j u) f_{m+1-j}
-\sum_{j=1}^{m-1}{m\choose j}(\p^j v)(\partial_y f_{m-1})\\
&&-\left[\partial_y\inner{\frac{\partial_y\omega}{\omega}}\right]\sum_{j=1}^{m-1}{m\choose j}
(\p^j v)(\p^{m-j}u)
-2\left[\partial_y\inner{\frac{\partial_y\omega}{\omega} }\right]f_m.
\end{eqnarray*}

\bigskip
Next we will give the boundary value of $\partial_y f_m$ and $\partial_t f_m-\partial_y^2 f_m$.   In view of \reff{ueqy},  we infer,  recalling $u|_{y=0}=v|_{y=0}=0,$ 
\begin{eqnarray*}
	\partial_y\omega\big|_{y=0}= \partial_y^2 u\big|_{y=0}=0. 
\end{eqnarray*} 
As a result,   observing 
\begin{eqnarray*}
\partial_y f_m=\partial_y\p^m\omega-\left[\partial_y\left(\frac{\partial_y\omega}{\omega}\right)\right]\p^m u-\left(\frac{\partial_y\omega}{\omega}\right)\partial_y\p^m u, 
\end{eqnarray*}
we have
\begin{eqnarray}\label{15042201}
\partial_y f_m|_{y=0}=0.
\end{eqnarray}
Direct verification shows
\begin{eqnarray*}
\mathcal{Z}_m|_{y=0}=-2 \left[\partial_y\inner{\frac{\partial_y\omega}{\omega} }\right]f_m \Big|_{y=0},
\end{eqnarray*}
and thus 
\begin{eqnarray}\label{15042202}
\left(\partial_t f_m-\partial_y^2 f_m\right)|_{y=0}=\mathcal{Z}_m|_{y=0}=-2 \left[\partial_y\inner{\frac{\partial_y\omega}{\omega} }\right]f_m \Big|_{y=0},
\end{eqnarray}
due to the equation fulfilled by $f_m.$

\section*{Acknowledgments}
W. X. Li was supported  by NSF of China(No. 11422106) and C.-J. Xu was partially supported by `` the Fundamental Research
Funds for the Central Universities'' and
the NSF of China (No. 11171261).

\end{document}